\newcommand\reallywidehat[1]{%
\savestack{\tmpbox}{\stretchto{%
  \scaleto{%
    \scalerel*[\widthof{\ensuremath{#1}}]{\kern-.6pt\bigwedge\kern-.6pt}%
    {\rule[-\textheight/2]{1ex}{\textheight}}
  }{\textheight}%
}{0.5ex}}%
\stackon[1pt]{#1}{\tmpbox}%
}
\newcommand\reallywidecheck[1]{%
\savestack{\tmpbox}{\stretchto{%
  \scaleto{%
    \scalerel*[\widthof{\ensuremath{#1}}]{\kern-.6pt\bigwedge\kern-.6pt}%
    {\rule[-\textheight/2]{1ex}{\textheight}}
  }{\textheight}%
}{0.5ex}}%
\stackon[1pt]{#1}{\scalebox{-1}{\tmpbox}}%
}
\numberwithin{equation}{section}
\newcommand{\RR}{{\mathbb R}}
 \newtheorem{theorem}{Theorem}[section]
 \newtheorem{lemma}[theorem]{Lemma}
 \newtheorem{proposition}[theorem]{Proposition}
 \newtheorem{corollary}[theorem]{Corollary}
  \newtheorem{remark}[theorem]{Remark}
\newcommand{\chairBottomLeft}{
\filldraw[draw=gray, fill=red, opacity=.6] (0, 0) -- (0, 2) -- (1, 2) -- (1, 1) -- (2, 1) -- (2, 0) -- cycle;
\filldraw[draw=gray, fill=green, opacity=.6] (1, 1) -- (1, 3) -- (2, 3) -- (2, 2) -- (3, 2) -- (3, 1) -- cycle;
\filldraw[draw=gray, fill=blue, opacity=.6] (0, 2) -- (0, 4) -- (2, 4) -- (2, 3) -- (1, 3) -- (1, 2) -- cycle;
\filldraw[draw=gray, fill=blue, opacity=.6] (2, 0) -- (2, 1) -- (3, 1) -- (3, 2) -- (4, 2) -- (4, 0) -- cycle;
}
\newcommand{\chairBottomRight}{
\filldraw[draw=gray, fill=red, opacity=.6] (4, 0) -- (4, 2) -- (5, 2) -- (5, 1) -- (6, 1) -- (6, 0) -- cycle;
\filldraw[draw=gray, fill=red, opacity=.6] (6, 3) -- (6, 4) -- (8, 4) -- (8, 2) -- (7, 2) -- (7, 3) -- cycle;
\filldraw[draw=gray, fill=blue, opacity=.6] (6, 0) -- (6, 1) -- (7, 1) -- (7, 2) -- (8, 2) -- (8, 0) -- cycle;
\filldraw[draw=gray, fill=green, opacity=.6] (5, 1) -- (5, 2) -- (6, 2) -- (6, 3) -- (7, 3) -- (7, 1) -- cycle;
}
\newcommand{\chairTopRight}{
\filldraw[draw=gray, fill=red, opacity=.6] (6, 7) -- (6, 8) -- (8, 8) -- (8, 6) -- (7, 6) -- (7, 7) -- cycle;
\filldraw[draw=gray, fill=green, opacity=.6] (5, 6) -- (5, 7) -- (7, 7) -- (7, 5) -- (6, 5) -- (6, 6) -- cycle;
\filldraw[draw=gray, fill=blue, opacity=.6] (4, 6) -- (4, 8) -- (6, 8) -- (6, 7) -- (5, 7) -- (5, 6) -- cycle;
\filldraw[draw=gray, fill=blue, opacity=.6] (6, 4) -- (6, 5) -- (7, 5) -- (7, 6) -- (8, 6) -- (8, 4) -- cycle;
}
\newcommand{\chairTopLeft}{
\filldraw[draw=gray, fill=red, opacity=.6] (0, 4) -- (0, 6) -- (1, 6) -- (1, 5) -- (2, 5) -- (2, 4) -- cycle;
\filldraw[draw=gray, fill=red, opacity=.6] (2, 7) -- (2, 8) -- (4, 8) -- (4, 6) -- (3, 6) -- (3, 7) -- cycle;
\filldraw[draw=gray, fill=green, opacity=.6] (1, 5) -- (1, 7) -- (3, 7) -- (3, 6) -- (2, 6) -- (2, 5) -- cycle;
\filldraw [draw=gray, fill=blue, opacity=.6] (0, 6) -- (0, 8) -- (2, 8) -- (2, 7) -- (1, 7) -- (1, 6) -- cycle;
} 
\newcommand{\chairTopLeftSuper}{
\begin{scope}[shift={(-2, 2)}]
\chairBottomLeft\chairTopLeft\chairTopRight
\end{scope}
\chairTopLeft
\draw (-2,2) node {\textbullet};
\draw (-2,6) node {\textbullet};
\draw (-2,10) node {\textbullet};
\draw (2,2) node {\textbullet};
\draw (2,6) node {\textbullet};
\draw (2,10) node {\textbullet};
\draw (6,6) node {\textbullet};
\draw (6,10) node {\textbullet};
}
\newcommand{\chairBottomLeftSuper}{
\begin{scope}[shift={(-2, -2)}]
\chairBottomLeft\chairTopLeft\chairBottomRight
\end{scope}
\chairBottomLeft
\draw (-2,-2) node {\textbullet};
\draw (-2,2) node {\textbullet};
\draw (-2,6) node {\textbullet};
\draw (2,-2) node {\textbullet};
\draw (2,2) node {\textbullet};
\draw (2,6) node {\textbullet};
\draw (6,-2) node {\textbullet};
\draw (6,2) node {\textbullet};
}
\newcommand{\chairTopRightSuper} {
\begin{scope}[shift={(2, 2)}]
\chairTopLeft\chairTopRight\chairBottomRight
\end{scope}
\chairTopRight
\draw (10,2) node {\textbullet};
\draw (10,6) node {\textbullet};
\draw (10,10) node {\textbullet};
\draw (2,10) node {\textbullet};
\draw (2,6) node {\textbullet};
\draw (6,2) node {\textbullet};
\draw (6,6) node {\textbullet};
\draw (6,10) node {\textbullet};
}
\newcommand{\chairBottomRightSuper} {
\begin{scope}[shift={(2, -2)}]
\chairTopRight\chairBottomRight\chairBottomLeft
\end{scope}
\chairBottomRight
\draw (10,-2) node {\textbullet};
\draw (10,2) node {\textbullet};
\draw (10,6) node {\textbullet};
\draw (2,-2) node {\textbullet};
\draw (2,2) node {\textbullet};
\draw (6,-2) node {\textbullet};
\draw (6,2) node {\textbullet};
\draw (6,6) node {\textbullet};
} 
\begin{document}
\title{Colourings of aperiodic tilings}
\author[M. Evans]{Molly Evans}
\address{Department of Mathematics and Statistics, MacEwan University, 
\newline \hspace*{\parindent} 
Edmonton, Alberta, 
Canada}
\email{awe@ualberta.ca}

\author[D. Gawlak]{Dylan Gawlak}
\email{gawlakd@mymacewan.ca}

\author[C. Ramsey]{Christopher Ramsey}
\email{ramseyc5@macewan.ca}
\urladdr{https://sites.google.com/macewan.ca/chrisramsey/}

\author[N. Strungaru]{Nicolae Strungaru}
\email{strungarun@macewan.ca}
\urladdr{http://academic.macewan.ca/strungarun/}

\author[R. Trang]{Ryan Trang}
\email{trangr@mymacewan.ca}

\makeatletter
\@namedef{subjclassname@2020}{%
  \textup{2020} Mathematics Subject Classification}
\makeatother

\subjclass[2020]{05C15 
, 52C20 
, 52C23 
, 05C10 
}
\keywords{Substitution tiling, aperiodic, graph colouring}

\maketitle

\begin{abstract} We find explicit optimal vertex, edge and face coulourings for the chair tiling, the Ammann--Beenker tiling, the rational pinwheel tiling and the pinwheel tiling.
\end{abstract}

\section{Introduction}

Many nice models of pointsets with long range aperiodic order are produced via substitution rules. A tile substitution in the plane, with polygonal prototiles, produces a tiling of $\RR^2$, which can be seen as a planar drawing of an infinite planar graph whose faces are the tiles. See \cite{TAO} for the theory of substitution tilings and the Tilings Encyclopedia \cite{TE} for a wonderful graphical catalogue of known substitution tilings. It is natural to ask what are the chromatic number, the chromatic index of this infinite graph, and what is the optimal face colouring number for this graph (or equivalently the chromatic number of the dual graph). A question that, to date, has only been answered in the rhombic Penrose tiling \cite{SW} to the best of the authors' knowledge. We answer these questions for some of the well known 2-dimensional substitution tilings with explicit colouring algorithms. 

As all our tilings are infinite planar graphs, the 4-colour theorem gives us an upper-bound of four for both the chromatic number and the optimal number of colours used for face colouring. It is easy to check when the lower-bound of two is reached (or more precisely not reached), typically leaving us with only two potential choices. Similarly, Vizing's theorem (Corollary~\ref{cor:viz}) gives us only two potential choices for the chromatic index. While this seems to make the problems easy, deciding between the two choices is in general a hard problem. For example, deciding between the two choices for the chromatic index for finite graphs is an NP-complete problem \cite{Hoy}, and we are dealing with infinite graphs.

The paper is organized as follows: Section 2 contains some preliminary material on finite and infinite planar graph colouring. Section 3 proves that the chair tiling has chromatic number 2, chromatic index 4, and a minimal face colouring of 3. Section 4 proves that the Ammann--Beenker tiling has a chromatic number 3, chromatic index 8, and a minimal face colouring of 2. Section 5 proves that the rational pinwheel tiling has chromatic number 3, chromatic index 8, and a minimal face colouring of 3. Finally, Section 6 proves that the pinwheel tiling has chromatic number 3, chromatic index 8, and a minimal face colouring of 3. As mentioned before, all of these colourings are by way of explicit algorithms.

\section{Preliminaries}

In this section we review some of the basic properties of colourings of graphs. Given a (not necesarily finite) simple graph $G$, we denote its chromatic number by $\chi(G)$ and its chromatic index by $\chi'(G)$ (see \cite[Chapter 5]{Wil} for definitions and properties). As usual, we denote by $\Delta(G)$ (or simply $\Delta$) the supremum of its degrees, that is 
\begin{displaymath}
\Delta(G) := \sup \{ \deg(v) : v \in V(G) \}\;.
\end{displaymath}
We are particularly interested in tilings with finitely many tiles/prototiles, in particular the tiling has finite local complexity which implies that $\Delta$ will be finite.
\smallskip

We start by recalling one of the fundamental results about colouring infinite graphs, the de Bruijn-Erd\"os Theorem. 

\begin{theorem}[de Bruijn--Erd\"os Theorem]\label{thm: deBEr}\cite{Gott} Let $G$ be an infinite graph. Then
\begin{itemize}
    \item[(a)] The chromatic number of $G$ is the supremum of the chromatic numbers of its finite subgraphs.
    \item[(b)] The chromatic index of $G$ is the supremum of the chromatic indices of its finite subgraphs.
    \item[(c)] If $G$ is planar then its faces can be coloured with $k$ colours if and only if for any finite subgraph $G'$ of $G$, the finite faces of $G'$ can be coloured with $k$ colours.
\end{itemize} 
\end{theorem}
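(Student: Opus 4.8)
The plan is to deduce all three statements from one compactness principle. In each case one inequality is immediate: restricting a proper vertex-colouring (respectively edge-colouring, respectively face-colouring) of $G$ to a finite subgraph yields a proper colouring of that subgraph, so $\chi(G)$, $\chi'(G)$ and the face-colouring number of $G$ each dominate the supremum over finite subgraphs; and if that supremum equals $\infty$ there is nothing left to prove. So one fixes a finite $k$ such that every finite subgraph is $k$-colourable in the relevant sense, and manufactures a $k$-colouring of all of $G$.

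For (a), consider $X=\{1,\dots,k\}^{V(G)}$, the set of all colour assignments to vertices, with the product topology of discrete finite spaces; by Tychonoff's theorem $X$ is compact. For each edge $e=uv$ put $A_e=\{c\in X : c(u)\neq c(v)\}$, a closed (indeed clopen) subset of $X$, and observe that $c\in X$ is a proper $k$-colouring of $G$ exactly when $c\in\bigcap_{e\in E(G)}A_e$. Given finitely many edges $e_1,\dots,e_m$, let $G'$ be the finite subgraph of $G$ with edge set $\{e_1,\dots,e_m\}$; it admits a proper $k$-colouring, and extending that colouring arbitrarily on the remaining vertices produces a point of $A_{e_1}\cap\dots\cap A_{e_m}$. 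Thus $\{A_e\}_{e\in E(G)}$ has the finite intersection property, so $\bigcap_{e}A_e\neq\varnothing$, i.e.\ $G$ has a proper $k$-colouring.

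Statement (b) reduces to (a) applied to the line graph $L(G)$, since proper $k$-edge-colourings of $G$ are precisely proper $k$-vertex-colourings of $L(G)$, and every finite subgraph of $L(G)$ sits inside $L(G')$ for the finite subgraph $G'$ of $G$ carrying exactly the (finitely many) edges that are its vertices, whence $\chi'(G')\le k$ bounds its chromatic number. Statement (c) is proved by repeating the argument of (a) verbatim with $X=\{1,\dots,k\}^{F}$, where $F$ is the set of faces (tiles) of $G$, and one closed set $A_e=\{c: c(f)\neq c(f')\}$ for each edge $e$ of $G$ separating faces $f,f'$. To see a finite intersection $A_{e_1}\cap\dots\cap A_{e_m}$ is nonempty, let $S$ be the finite set of faces incident to $e_1,\dots,e_m$ and let $G'\subseteq G$ be the union of the boundary cycles of the faces in $S$, a finite subgraph; each face in $S$ is a bounded polygon whose entire boundary lies in $G'$, so it remains a bounded (``finite'') face of $G'$, and faces separated by some $e_i$ in $G$ stay distinct and $e_i$-adjacent in $G'$. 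Hence a $k$-colouring of the finite faces of $G'$ restricts to the required point of $X$.

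The only genuinely delicate point is the bookkeeping in (c): one must ensure that passing to the finite subgraph $G'$ neither merges two faces adjacent in $G$ nor absorbs a tile into the unbounded face, which is exactly why $G'$ is taken to contain the \emph{whole} boundary of each face in play. For the substitution tilings treated here this is automatic, since $G$ is locally finite with $\Delta(G)<\infty$ and each tile is a compact polygon, so a union of finitely many tile boundaries is a finite planar graph; for arbitrary infinite planar graphs one would additionally need to fix a precise notion of ``face''. (If one wishes to avoid Tychonoff's theorem, note these tiling graphs are countable, so K\"onig's infinity lemma may be used in place of topological compactness.)
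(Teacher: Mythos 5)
The paper gives no proof of this theorem at all---it is stated with a citation to Gottschalk, whose note is precisely the choice-function/Tychonoff argument you wrote out---so for parts (a) and (b) your proposal is correct and coincides with the intended (cited) proof. The product-space compactness argument for (a) is the standard one, and the reduction of (b) to (a) via the line graph is sound, since every finite subgraph of $L(G)$ sits inside $L(G')$ for a finite subgraph $G'$ of $G$ and $\chi(L(G'))=\chi'(G')\le k$.

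Part (c) contains a genuine gap, in the ``only if'' direction, which you dispose of in a single clause of your first paragraph by asserting that a face-colouring of $G$ ``restricts'' to any finite subgraph. It does not: a finite face of $G'$ is in general a \emph{union} of several faces of $G$ (namely those separated only by edges of $G\setminus G'$), so the adjacency graph of the finite faces of $G'$ is a minor, not a subgraph, of the dual of $G$, and minors need not inherit $k$-colourability. Concretely, let $G$ be the unit square grid, whose faces are $2$-colourable, and let $G'$ consist of the perimeter of a $2\times 2$ block together with three of the four interior edges meeting the centre vertex. Then $G'$ has exactly three finite faces---two unit squares and one merged domino---which are pairwise adjacent, so they cannot be $2$-coloured; the literal ``only if'' direction of (c) therefore fails, and no restriction argument can establish it. What is actually needed in the paper (to import the four-colour theorem, and to promote colourings of finite patches to colourings of the whole tiling) is only the ``if'' direction, and that part of your argument is correct precisely because you construct $G'$ to contain the \emph{entire} boundary cycle of every face in play, so those faces survive unmerged as bounded faces of $G'$ with the relevant adjacencies intact. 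I would restate (c) with only that implication, or phrase the converse in terms of finite subgraphs of the dual graph, where restriction is legitimate; your closing caveats about general planar graphs versus locally finite tilings are then exactly the right ones to keep.
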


The de Bruijn--Erd\"os Theorem has many useful applications, some of which we list below.

\smallskip

Since a finite graph is 2-colourable if and only if it has no odd cycle \cite[Thm.~2.1]{Wil}, we immediately get:

\begin{corollary}\label{cor:bip} Let $G$ be an infinite planar graph, having only finite faces. Then $G$ is bipartite if and only if each face has an even number of edges.
In particular, a tiling of the plane defines a bipartite graph if and only if all tiles have an even number of edges.
\end{corollary}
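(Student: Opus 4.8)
The plan is to reduce bipartiteness to the absence of odd cycles by means of the de Bruijn--Erd\"os Theorem, and then to translate the condition ``$G$ has no odd cycle'' into the stated condition on faces using the Jordan curve theorem together with the cycle space of a plane graph.

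First, a graph is bipartite exactly when its chromatic number is at most $2$. Since a finite graph is $2$-colourable if and only if it contains no odd cycle \cite[Thm.~2.1]{Wil}, Theorem~\ref{thm: deBEr}(a) shows that $\chi(G)\le 2$ if and only if every finite subgraph of $G$ is free of odd cycles, i.e.\ if and only if $G$ itself contains no odd cycle. Thus it remains to prove that $G$ has an odd cycle if and only if some face of $G$ has an odd number of edges.

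For one direction, suppose some face $f$ has an odd number of edges, so that its boundary walk has odd length. Every closed walk of odd length contains an odd cycle: split the walk into two closed subwalks at a repeated vertex, note that the two lengths sum to the original length so one of them is odd and strictly shorter, and iterate down to a cycle. So $G$ has an odd cycle. For the converse, let $C$ be an odd cycle in $G$. By the Jordan curve theorem the simple closed curve $C$ bounds a bounded open region $U$ of the plane. Because every face of $G$ is finite --- and, for the tilings studied here, finite local complexity makes $G$ locally finite, so that $U$ meets only finitely many faces --- the region $U$ is tiled by finitely many faces $f_1,\dots,f_k$ of $G$. In the edge space of $G$ over $\ZZ/2\ZZ$ one then has the identity $E(C)=\sum_{i=1}^{k}\partial f_i$, where $\partial f_i$ is the set of edges occurring an odd number of times in the boundary walk of $f_i$: an edge lying in $U$ occurs a total of twice among these boundary walks and so cancels modulo $2$, whereas each edge of $C$ occurs exactly once. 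Since $|E(C)|$ is odd, $|\partial f_i|$ is odd for some $i$; and $|\partial f_i|$ has the same parity as the length of the boundary walk of $f_i$, because an edge traversed twice by that walk contributes $2$ to the length and $0$ to $|\partial f_i|$. Hence $f_i$ has an odd number of edges. The final assertion is the special case in which $G$ comes from a tiling, each face being a polygon whose number of edges is its number of sides.

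The step I expect to demand the most care is the identity $E(C)=\sum_i\partial f_i$ in the edge space --- this is the classical fact that the cycle space of a plane graph is generated by the boundaries of its bounded faces --- and, hidden inside it, the assertion that the bounded region $U$ meets only finitely many faces; this is precisely where the hypothesis that all faces are finite (together with local finiteness, automatic under finite local complexity) is used. One may avoid the cycle-space formalism altogether by instead inducting on the number of faces enclosed by $C$: if that number is $1$, then $C$ bounds a single face with $|E(C)|$ edges and we are done; otherwise a path through $U$ joining two vertices of $C$ splits $C$ into two shorter cycles whose lengths sum to $|E(C)|$ plus an even number, so one of the two is an odd cycle enclosing strictly fewer faces, and induction finishes the argument.
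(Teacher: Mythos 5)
Your proof is correct and follows the same route as the paper: reduce bipartiteness to the absence of odd cycles via the de Bruijn--Erd\H{o}s theorem and the finite-graph characterisation from \cite[Thm.~2.1]{Wil}. The paper treats the remaining equivalence (odd cycle $\Leftrightarrow$ odd face) as immediate, whereas you supply the cycle-space/face-decomposition argument in full; this is a welcome elaboration rather than a different approach.
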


\smallskip
By combining Thm.~\ref{thm: deBEr} with the famous 4-colour theorem (see for example \cite[Thm.~5.5 and Cor.~5.10]{Wil}) we get:

\begin{corollary} Let $G$ be an infinite planar graph. Then, the faces and vertices of $G$, respectively, can be 4-coloured.
\end{corollary}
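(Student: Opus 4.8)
The plan is to deduce both statements directly from the de Bruijn--Erd\"os Theorem (Theorem~\ref{thm: deBEr}) together with the 4-colour theorem, handling the vertex case and the face case separately, in each instance by passing to finite subgraphs.

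For the vertex colouring, I would first note that every finite subgraph $G'$ of $G$ is again planar, since a planar drawing of $G$ restricts to a planar drawing of $G'$. The finite form of the 4-colour theorem (cited as \cite[Thm.~5.5]{Wil}) then gives $\chi(G') \le 4$. As this holds for every finite subgraph, part (a) of Theorem~\ref{thm: deBEr} yields $\chi(G) = \sup\{\chi(G') : G' \text{ a finite subgraph of } G\} \le 4$, which is what is claimed for vertices.

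For the face colouring, I would again fix an arbitrary finite subgraph $G'$ of $G$. As a finite planar graph, $G'$ has one unbounded face together with finitely many bounded faces, and the map-colouring form of the 4-colour theorem (the equivalent formulation recorded in \cite[Cor.~5.10]{Wil}) colours all of its faces, in particular all of its finite faces, with $4$ colours so that faces sharing an edge get distinct colours. The only point worth a word is that a bridge of $G'$ borders a single face on both sides and hence imposes no constraint; this is harmless, and in any event only the finite faces are needed here. Applying part (c) of Theorem~\ref{thm: deBEr} then shows that the faces of $G$ can be coloured with $4$ colours.

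I do not expect any genuine obstacle: once Theorem~\ref{thm: deBEr} is available the corollary is essentially immediate, the only routine checks being that planarity is inherited by subgraphs and that the vertex-colouring and map-colouring forms of the 4-colour theorem may be used interchangeably for this purpose.
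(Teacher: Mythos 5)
Your proposal is correct and follows exactly the route the paper intends: the corollary is stated there as an immediate consequence of combining Theorem~\ref{thm: deBEr} (parts (a) and (c)) with the finite 4-colour theorem in its vertex and map forms. Your elaboration of the two cases, including the remark about planarity passing to subgraphs and the harmlessness of bridges, simply fills in the routine details the paper leaves implicit.
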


\medskip 

Next, the Vizing theorem for finite graphs \cite[Thm.~5.15]{Wil} yields:

\begin{corollary}[Vizing theorem for infinite graphs]\label{cor:viz} Let $G$ be an infinite graph. Then,
\begin{itemize}
    \item[(a)] If $\Delta = \infty$ then $\chi'(G)=\infty$.
    \item[(b)] If $\Delta < \infty$ then $\Delta \leq \chi'(G) \leq \Delta+1$.
\end{itemize}
\end{corollary}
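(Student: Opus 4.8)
The plan is to deduce both statements from the de Bruijn--Erd\"os Theorem (Theorem~\ref{thm: deBEr}(b)) together with Vizing's theorem for finite graphs, which asserts that every finite graph $H$ satisfies $\Delta(H) \le \chi'(H) \le \Delta(H)+1$. The only standing observation needed is the elementary fact that at any vertex $v$ the edges incident to $v$ must receive pairwise distinct colours; hence if $v$ is incident to at least $n$ edges, then any $n$ of them span a star subgraph $K_{1,n}$, which is a finite subgraph of $G$ of chromatic index exactly $n$. By Theorem~\ref{thm: deBEr}(b) this forces $\chi'(G) \ge n$.

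For part (a), suppose $\Delta = \infty$. Then for each $n \in \NN$ there is a vertex incident to at least $n$ edges, so by the observation above $\chi'(G) \ge n$. As $n$ is arbitrary, $\chi'(G) = \infty$.

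For part (b), suppose $\Delta < \infty$. Since $\{\deg(v) : v \in V(G)\}$ is a nonempty set of non-negative integers that is bounded above, its supremum $\Delta$ is in fact a maximum, attained at some vertex $v_0$; the star at $v_0$ then forces $\chi'(G) \ge \Delta$, which is the lower bound. For the upper bound, let $H$ be an arbitrary finite subgraph of $G$. Then $\Delta(H) \le \Delta(G) = \Delta$, so finite Vizing gives $\chi'(H) \le \Delta(H) + 1 \le \Delta + 1$. Taking the supremum over all finite subgraphs of $G$ and invoking Theorem~\ref{thm: deBEr}(b) yields $\chi'(G) \le \Delta + 1$.

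There is no genuine obstacle here: the entire content is packaged in the two quoted theorems, and the argument amounts to checking that a vertex of (near-)maximal degree produces the required finite star subgraph and that passing to the supremum over finite subgraphs is legitimate. The one point worth a word of care is that when $\Delta$ is finite it is \emph{realised} by some vertex, so that the lower bound is $\chi'(G) \ge \Delta$ rather than merely $\chi'(G) \ge \Delta - 1$; this is immediate because a set of integers bounded above has a maximum.
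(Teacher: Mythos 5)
Your proof is correct and follows exactly the route the paper intends: the paper states this corollary without proof as an immediate consequence of the finite Vizing theorem combined with Theorem~\ref{thm: deBEr}(b), which is precisely your argument. Your added care about the lower bound (that a finite $\Delta$ is attained at some vertex, whose star forces $\chi'(G)\ge\Delta$) is a worthwhile detail the paper leaves implicit.
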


Exactly as for finite graphs \cite[Thm.~5.18]{Wil}, the chromatic index of a bipartite graph $G$ is equal to $\Delta(G)$:

\begin{theorem}[K\"onig Theorem for infinite graphs]\label{thm:Kon} Let $G$ be an infinite bipartite graph. Then $\chi'(G)=\Delta$.
\end{theorem}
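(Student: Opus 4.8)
The plan is to transfer the finite König theorem to the infinite setting by a compactness argument, exactly as Corollary~\ref{cor:viz} transfers Vizing's theorem. The only input needed beyond Theorem~\ref{thm: deBEr} is the observation that bipartiteness is hereditary: any subgraph $H$ of a bipartite graph $G$ is again bipartite (restrict a bipartition of $V(G)$ to $V(H)$; equivalently, $H$ inherits from $G$ the property of having no odd cycle, cf. Corollary~\ref{cor:bip}).

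With that in hand, I would argue as follows. By Theorem~\ref{thm: deBEr}(b), $\chi'(G)$ equals the supremum of $\chi'(H)$ taken over all finite subgraphs $H$ of $G$. Each such $H$ is a finite bipartite graph, so the finite König theorem \cite[Thm.~5.18]{Wil} gives $\chi'(H)=\Delta(H)$. Hence $\chi'(G)=\sup\{\Delta(H): H \text{ a finite subgraph of } G\}$. It remains to identify this supremum with $\Delta=\Delta(G)$. The inequality $\Delta(H)\le\Delta(G)$ is trivial, and for the reverse one notes that for every vertex $v$ the star on $\{v\}\cup N(v)$ (or, when $\deg(v)=\infty$, the star on $v$ together with any $n$ of its neighbours) is a finite subgraph realizing (resp. approximating) $\deg(v)$; taking the supremum over $v$ gives $\sup_H \Delta(H)\ge \Delta(G)$. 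Therefore $\chi'(G)=\Delta$. This uniform argument also covers the case $\Delta=\infty$; alternatively one may simply split off that case at the start and quote Corollary~\ref{cor:viz}(a), which already yields $\chi'(G)=\infty=\Delta$, then run the above only when $\Delta<\infty$, where Corollary~\ref{cor:viz}(b) supplies the lower bound $\chi'(G)\ge\Delta$ for free and one needs only $\chi'(G)\le\Delta$.

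I do not expect a genuine obstacle here; the result is essentially a dictionary translation of the finite theorem. The two points that merit a sentence of care are (i) making explicit that bipartiteness passes to subgraphs, since Theorem~\ref{thm: deBEr}(b) supremizes over all finite subgraphs and the finite König theorem may only be applied once we know these are bipartite, and (ii) the bookkeeping for $\Delta=\infty$, where "$\Delta$-edge-colourability" must be read as $\chi'(G)=\infty$ and the equality $\sup_H\Delta(H)=\Delta(G)$ still holds by the star argument above.
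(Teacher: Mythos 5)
Your argument is correct and is exactly the route the paper intends: the theorem is stated right after the remark that it follows ``exactly as for finite graphs,'' i.e.\ by combining Theorem~\ref{thm: deBEr}(b) with the finite K\"onig theorem, just as Corollary~\ref{cor:viz} is obtained from finite Vizing. Your two points of care (heredity of bipartiteness and the identification $\sup_H \Delta(H)=\Delta(G)$, including the $\Delta=\infty$ case) are precisely the details the paper leaves implicit, so nothing further is needed.
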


\medskip

Finally, we show that for graphs with large $\Delta$, the chromatic index coincides with $\Delta$. 

\begin{theorem}\label{thm:greaterthan7} Let $G$ be an infinite planar graph. If $\Delta \geq 7$ then 
\begin{displaymath}
\chi'(G)=\Delta\;.
\end{displaymath}
\end{theorem}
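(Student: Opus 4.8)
The plan is to reduce to finite subgraphs via the de Bruijn--Erd\"os Theorem and then invoke the deep (finite) fact that planar graphs of maximum degree at least $7$ are of Class 1. The inequality $\chi'(G) \geq \Delta$ is immediate (it is part (b) of Corollary~\ref{cor:viz}), so all the work goes into proving the upper bound $\chi'(G) \leq \Delta$.

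By Theorem~\ref{thm: deBEr}(b) it suffices to bound $\chi'(G')$ for every finite subgraph $G'$ of $G$. Such a $G'$ is a finite \emph{planar} graph, and $\Delta(G') \leq \Delta(G) = \Delta$, but one cannot assume $\Delta(G') \geq 7$, so I would split into two cases. If $\Delta(G') \geq 7$, then $G'$ is a finite planar graph of maximum degree at least $7$, hence Class 1 by Vizing's theorem on planar graphs (the case $\Delta \geq 8$) together with its extension to $\Delta = 7$ (Sanders--Zhao, and independently Zhang); thus $\chi'(G') = \Delta(G') \leq \Delta$. If instead $\Delta(G') \leq 6$, then the ordinary Vizing theorem for finite graphs gives $\chi'(G') \leq \Delta(G') + 1 \leq 7 \leq \Delta$, where the last inequality uses the hypothesis $\Delta \geq 7$.

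In either case $\chi'(G') \leq \Delta$. Taking the supremum over all finite subgraphs $G'$ and applying Theorem~\ref{thm: deBEr}(b) yields $\chi'(G) \leq \Delta$, and combined with $\chi'(G) \geq \Delta$ this gives $\chi'(G) = \Delta$.

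The only genuinely nontrivial ingredient is the cited finite-graph theorem that planar graphs with $\Delta \geq 7$ are Class 1; everything else is a routine packaging of it through de Bruijn--Erd\"os and the elementary $\Delta(G') \leq 6$ case. This is also where the threshold $7$ is forced: for $\Delta = 6$ the corresponding statement is the still-open planar graph conjecture of Vizing, and for $\Delta \leq 5$ there exist Class 2 planar graphs, so the hypothesis cannot be weakened with current knowledge.
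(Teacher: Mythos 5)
Your proof is correct and follows essentially the same route as the paper, which simply cites Vizing for the finite planar case $\Delta \geq 8$ and Sanders--Zhao for $\Delta = 7$, leaving the de Bruijn--Erd\H{o}s reduction implicit. You have merely made that reduction explicit and handled the (necessary but routine) case of finite subgraphs with $\Delta(G') \leq 6$, which the paper glosses over.
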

\begin{proof}
The case $\Delta \geq 8$ is proven in \cite{Viz}. The case $\Delta=7$ follows from \cite{SZ}.
\end{proof}

\subsection{A simple tool for edge colourings of tilings}

Many of the edge colourings we will provide below are done via a simple method which we describe in full generality here.

\begin{theorem}[Directional Alternating Colouring Method]\label{thm:direct colouring} Let $G$ be a simple graph which is finite or infinite, with all edges straight line segments. 

If there exists pairwise non-parallel vectors $\vec{u_1}, \vec{u_2},..., \vec{u_n}$ such that each edge $e \in E(G)$ is parallel to some $\vec{u_j}$ then, 
\begin{displaymath}
\chi'(G) \leq 2n \,.
\end{displaymath}

Moreover, an explicit edge colouring of $G$ with colours $c_1,c_2,...,c_{2n}$ is given by the following method, which we will call the \textbf{directional alternating colouring method}:

Let $L$ be the set of all lines in $\RR^2$ which contain at least an edge of $G$, For each $l \in L$, we know that there exists an unique $1 \leq j \leq n$ such that $l \parallel \vec{u_j}$. Colour all edges on $l$ alternately, no matter if they are adjacent or not, with colours $c_{2j-1}$ and $c_{2j}$.
 \end{theorem}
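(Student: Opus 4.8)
The plan is to show that the directional alternating colouring method produces a proper edge colouring, which immediately gives the bound $\chi'(G) \leq 2n$. The key observation is that two edges of $G$ can be adjacent (share a vertex) only in one of two ways: either they lie on the same line $l \in L$, or they lie on two distinct lines. I will show that in neither case do they receive the same colour.

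First, suppose two adjacent edges $e, e'$ lie on distinct lines $l, l' \in L$. If $l \parallel \vec{u_j}$ and $l' \parallel \vec{u_k}$ with $j \neq k$, then $e$ gets a colour in $\{c_{2j-1}, c_{2j}\}$ and $e'$ gets a colour in $\{c_{2k-1}, c_{2k}\}$, and these sets are disjoint, so the colours differ. If instead $j = k$, then $l \parallel l'$ and $l \neq l'$, so $l$ and $l'$ are parallel but distinct lines; hence they cannot share a point, contradicting that $e \subseteq l$ and $e' \subseteq l'$ meet at a common vertex. So this case cannot occur. Second, suppose $e, e'$ lie on the same line $l$. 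Since $G$ is a simple graph with straight-line edges on $l$, the edges of $G$ contained in $l$ are segments along a single line; two such segments that share an endpoint are consecutive along $l$ (no other edge-segment of $G$ lies strictly between them, since any such segment would have to overlap one of $e, e'$, impossible in a simple graph). Because we colour the edges along $l$ alternately with $c_{2j-1}$ and $c_{2j}$, consecutive edges along $l$ receive different colours, so $e$ and $e'$ differ. Hence the colouring is proper and uses at most $2n$ colours.

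The main point requiring care is the claim that along a fixed line $l$, adjacent edges of $G$ are consecutive in the natural linear order on $l$, so that the two-colour alternation actually separates them; this is where finite local complexity or at least the simple-graph hypothesis is used, to rule out pathologies like infinitely many edge-segments accumulating at a point. One should also note that ``alternately'' is well-defined: order the edges of $G$ lying on $l$ by their position along $l$ (this is a well-ordered or $\ZZ$-indexed chain since the graph is simple and the edges are disjoint-interior segments), and assign colours by parity of the index. I expect the verification that distinct parallel lines never meet, combined with the alternation argument, to be entirely routine; the only genuine subtlety is making the linear ordering of edges along each line rigorous, which the simplicity of $G$ handles.
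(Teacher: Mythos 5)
Your proposal is correct and follows essentially the same argument as the paper: adjacent edges in different directions get colours from disjoint pairs, and adjacent edges in the same direction must lie on the same line (parallel distinct lines cannot meet) and be consecutive there, so the alternation separates them. Your extra remarks on the well-definedness of the alternating order along a line are a reasonable refinement of a point the paper leaves implicit, but they do not change the substance of the proof.
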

\begin{proof}
We need to show that the directional alternating colouring method produces a proper edge colouring.

Let $e_1,e_2$ be two edges which are adjacent at vertex $v$. Let $l_1, l_2$ be the lines containing $e_1,e_2$ and let $\vec{u_j}, \vec{u_k}$ be the unique vectors parallel to $l_1$ and $l_2$, respectively. 

If $j \neq k$ then $e_1, e_2$ are coloured by a different colour. 

If $j=k$ then $l_1 \parallel l_2$. Since both lines pass through $v$ we have $l_1=l_2$. Therefore, $e_1,e_2$ are consecutive edges on $l_1$, and hence are coloured with different colours.

\end{proof}

\section{The Chair Tiling}

In this section we study the chair tiling. For background, we refer the reader to \cite{TAO,BMS,GS,TE}.

The chair tiling refers to the fixed point tiling of the plane produced by the following single tile  substitution.

\[
\begin{tikzpicture}[scale=0.5]
\draw[gray, thick] (0,0) -- (2,0);
\draw[gray, thick] (0,0) -- (0,2);
\draw[gray, thick] (2,1) -- (2,0);
\draw[gray, thick] (1,2) -- (0,2);
\draw[gray, thick] (2,1) -- (1,1);
\draw[gray, thick] (1,2) -- (1,1);
\draw[gray, thick, ->] (3,1) -- (4,1);
\draw[gray, thick] (5,-1) -- (9,-1);
\draw[gray, thick] (5,-1) -- (5,3);
\draw[gray, thick] (9,1) -- (9,-1);
\draw[gray, thick] (7,3) -- (5,3);
\draw[gray, thick] (9,1) -- (7,1);
\draw[gray, thick] (7,3) -- (7,1);
\draw[gray, thick] (5,1) -- (6,1);
\draw[gray, thick] (6,0) -- (6,2);
\draw[gray, thick] (7,2) -- (6,2);
\draw[gray, thick] (6,0) -- (8,0);
\draw[gray, thick] (8,0) -- (8,1);
\draw[gray, thick] (7,0) -- (7,-1);
\end{tikzpicture}
\]

There are two natural choices for vertices. One choice is to let every tile have 8 vertices
\[
\begin{tikzpicture}[scale=0.5]
\draw[gray, thick] (0,0) -- (2,0);
\draw[gray, thick] (0,0) -- (0,2);
\draw[gray, thick] (2,1) -- (2,0);
\draw[gray, thick] (1,2) -- (0,2);
\draw[gray, thick] (2,1) -- (1,1);
\draw[gray, thick] (1,2) -- (1,1);
\draw (0,0) node {\textbullet};
\draw (1,0) node {\textbullet};
\draw (2,0) node {\textbullet};
\draw (2,1) node {\textbullet};
\draw (1,1) node {\textbullet};
\draw (1,2) node {\textbullet};
\draw (0,2) node {\textbullet};
\draw (0,1) node {\textbullet};
\end{tikzpicture}
\]
while the other choice is to only insert vertices where 90 degree edges meet, or equivalently only inserting vertices of degree at least 3, making the graph a map. The first choice makes the chair tiling bipartite and thus 2-colourable. We do not have an answer for the second choice of vertices but we conjecture that it is also 2-colourable since it seems that each chair has either 6 or 8 vertices.

\begin{proposition}\label{prop:ci chair}
The chair tiling has chromatic index $\chi'(C)=4$. 
\end{proposition}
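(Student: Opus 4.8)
The plan is to prove the two inequalities $\chi'(C)\le 4$ and $\chi'(C)\ge 4$ separately.

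\emph{Upper bound.} Every edge of the chair tiling is a straight segment parallel to one of the two coordinate axes, so the Directional Alternating Colouring Method (Theorem~\ref{thm:direct colouring}) applied with $n=2$, $\vec{u_1}=(1,0)$ and $\vec{u_2}=(0,1)$ yields an explicit proper edge colouring of $C$ with $2\cdot 2=4$ colours; hence $\chi'(C)\le 4$. (Alternatively: each chair has $8$ edges, so $C$ is bipartite by Corollary~\ref{cor:bip}, and König's Theorem~\ref{thm:Kon} gives $\chi'(C)=\Delta(C)$, which together with the degree bound below also forces $\chi'(C)\le 4$.)

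\emph{Lower bound.} It suffices to exhibit a single vertex $v$ of degree $4$, for then its four incident edges are pairwise adjacent and require four distinct colours. First I would record the elementary fact that a vertex of $C$ has degree $4$ if and only if the four unit cells meeting at it lie in four distinct chairs: all four edges at $v$ are present exactly when no two cyclically adjacent cells share a chair, and since each chair is a connected $L$-tromino, a chair containing two diagonally placed cells at $v$ must also contain one of the two cells separating them, so then two cyclically adjacent cells lie in that chair and $\deg(v)\le 3$. Since every vertex lies on a square lattice, $\deg(v)\le 4$ always, so it remains to locate a genuine four-fold crossing. One checks that none occurs inside a single first-level supertile (there the maximum degree is $3$), so I would pass to the patch obtained by substituting a chair twice (a patch that occurs in the tiling) and label each of its unit cells by the chair covering it; one then finds a point whose two lower cells lie in two distinct chairs coming from two distinct first-level supertiles and whose two upper cells lie in two distinct chairs of a third first-level supertile. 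The four cells then lie in four distinct chairs, so $v$ has degree $4$ and $\chi'(C)\ge 4$; a labelled picture of the twice-substituted chair makes this transparent.

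Combining the two bounds gives $\chi'(C)=4$. The main obstacle is the bookkeeping in the lower bound: one must iterate the substitution two levels deep — one level does not suffice, since a single supertile contains no four-fold crossing — and keep careful track of which chair covers each unit cell in order to certify a true degree-$4$ vertex.
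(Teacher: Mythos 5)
Your proposal is correct and follows essentially the same route as the paper: the upper bound via the Directional Alternating Colouring Method with the two coordinate directions, and the lower bound by exhibiting a degree-$4$ vertex (the paper simply asserts $\Delta=4$ by inspection of its figure, whereas you supply the verification that such a vertex first appears at the second substitution level where four distinct chairs meet). The extra bookkeeping you describe is a fleshed-out version of the paper's ``one can see in the picture'' step, not a different argument.
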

\begin{proof}
One can see in the picture following this proposition that $\Delta = 4$ for the chair tiling.
Thus, an explicit four coloring of the edges is given by the Directional Alternating Colouring Method for the vectors $\vec{u_1} =\begin{bmatrix}1\\0\end{bmatrix}$ and $\vec{u_2} =\begin{bmatrix}0\\1\end{bmatrix}$.
\end{proof}

\begin{remark} The 8-vertex tile choice is bipartite by Cor.~\ref{cor:bip} and hence $\chi'(G)=\Delta(G)=4$ by Thm.~\ref{thm:Kon}. This provides an alternate non-constructive proof of Prop.~\ref{prop:ci chair}, for the 8-vertex version.

It is important to note that our proof of  Prop.~\ref{prop:ci chair} works for both choices of vertices we mentioned for the chair tiling.
\end{remark}

The following picture is an edge colouring of a large patch of the 8-vertex tile choice chair tiling.
\[
\centering
\begin{tikzpicture}[scale=.25]
\draw[gray]  (0,0) -- (32,0);
\draw[gray]  (0,0) -- (0,32);
\draw[gray]  (32,32) -- (32,0);
\draw[gray]  (32,32) -- (0,32);
  \foreach  \y in {0,4,8,...,28}
{ 
\draw[line width=.5mm,color=blue]   (0,\y)--(0,\y+2);
\draw[line width=.5mm, color=blue]   (32,\y)--(32,\y+2);
};  
  \foreach  \y in {2,6,10,...,30}
{ 
\draw[line width=.5mm,color=green]   (0,\y)--(0,\y+2);
};  
 \foreach  \y in  {2,6,10,...,26}
{ 
\draw[line width=.5mm,color=green]   (32,\y)--(32,\y+2);
};  
\foreach  \x in {0,4,8,...,28}
{ 
\draw[line width=.5mm,color=red]   (\x,0)--(\x+2,0);
\draw[line width=.5mm,color=red]   (\x,32)--(\x+2,32);
};  
  \foreach  \x in  {2,6,10,...,30}
{ 
\draw[line width=.5mm,color=orange,opacity=.6]   (\x,0)--(\x+2,0);
};  
  \foreach  \x in  {2,6,10,...,26}
{ 
\draw[line width=.5mm,color=orange,opacity=.6]   (\x,32)--(\x+2,32);
};  
 \foreach \x/ \y in {2/2, 2/10, 2/18, 2/26, 4/0, 4/6, 4/10, 4/16, 4/22,4/26, 6/2, 6/8,6/18,6/24, 8/0, 8/4,8/10, 8/14,8/18,8/22, 8/28, 10/2, 10/10, 10/18, 10/24, 12/0,12/6, 12/12, 12/16,12/22,12/26,14/2,14/10,14/16,14/26, 16/0, 16/4, 16/8, 16/12, 16/18, 16/22, 16/26, 16/30, 18/2, 18/10, 18/18,18/26, 20/0,20/6,20/12,20/16,20/22,20/26,22/2,22/10,22/18,22/24,24/0, 24/4,24/10,24/14,24/18,24/24,24/28,26/2,26/8,26/18,26/28,28/0,28/6,28/10,28/16,28/22,28/28,30/2, 30/10, 30/18,30/26}
{ 
\draw[line width=.5mm,color=blue]   (\x,\y)--(\x,\y+2);
\draw[line width=.5mm,color=red]   (\y,\x)--(\y+2,\x);
};
 \foreach \x/ \y in {2/4, 2/12, 2/20, 2/28, 4/4, 4/8, 4/14, 4/20, 4/24,4/30,6/6,6/12, 6/22,6/28, 8/2,8/8, 8/12, 8/16,8/20,8/26, 8/30, 10/6,10/12,10/20,10/28,12/4,12/8,12/14,12/18,12/24,12/30,14/4, 14/14,14/20,14/28,16/2, 16/6,16/10, 16/16, 16/20, 16/24, 16/28, 18/4, 18/14,18/20,18/28,20/4,20/8,20/14,20/20,20/24,20/30,22/6,22/12,22/22,22/28,24/2,24/8,24/12,24/16,24/20,24/26,24/30,26/6,26/12,26/22,26/26,28/4,28/8, 28/14,28/20,28/24,28/30,30/4,30/12,30/20, 30/30}
{ 
\draw[line width=.5mm,color=green]   (\x,\y)--(\x,\y+2);
\draw[line width=.5mm,color=orange,opacity=.6]   (\y,\x)--(\y+2,\x);
};
\end{tikzpicture}
 \label{chairedge}
\]

\begin{theorem} Let $C$ denote the chair tiling.
The smallest number of colours needed to color the faces of $C$ is $3$. Moreover, a 3-coloring of the faces can be obtained by colouring 
    the four first level supertiles, based on orientation, the following way:

\vspace{.3cm} \begin{center}
\begin{tikzpicture}[scale=.45] 
\chairTopLeft
\end{tikzpicture}\hspace{0.5cm}
\begin{tikzpicture}[scale=.45] 
\chairTopRight
\end{tikzpicture}\hspace{0.5cm}
\begin{tikzpicture}[scale=.45] 
\chairBottomLeft
\end{tikzpicture}\hspace{0.5cm}
\begin{tikzpicture}[scale=.45] 
\chairBottomRight
\end{tikzpicture}
\end{center}
\end{theorem}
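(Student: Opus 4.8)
The plan is to prove the statement in two parts: first the lower bound $\chi_f(C) \geq 3$, then the upper bound by exhibiting and verifying the explicit colouring. For the lower bound, I would use Corollary~\ref{cor:bip} in its dual form: a planar graph's faces are $2$-colourable if and only if the dual graph is bipartite, which by the de Bruijn--Erd\"os Theorem (Thm.~\ref{thm: deBEr}(c)) reduces to showing the dual contains an odd cycle. Concretely, I would locate a small patch of the chair tiling in which some vertex is surrounded by an odd number of tiles, or equivalently a tile together with its neighbours forms an odd wheel in the dual. Looking at the $2\times 2$ L-shaped chair, a single interior vertex where three tiles meet (or more carefully, any point where an odd number of chairs are incident) gives the needed odd cycle; alternatively one exhibits an explicit odd cycle of length $3$ in a first-level supertile, where three chairs are pairwise adjacent. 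This forces at least $3$ face colours, and since the chair tiling is an infinite planar graph the $4$-colour theorem caps it, so only $\chi_f \in \{3,4\}$ are possible a priori — but we will in fact achieve $3$.

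For the upper bound, the strategy is the standard substitution-tiling self-similarity argument. The chair tiling is the fixed point of a substitution $\sigma$ with a single prototile appearing in four orientations (rotations by $0, 90, 180, 270$ degrees). I would define a colouring rule on each of the four first-level supertiles as depicted — assigning colours (here shown as red, green, blue) to the four chairs making up each supertile in a way that depends only on the supertile's orientation — and then argue that this rule is consistent under iterated substitution. The key is to check: (1) within each of the four displayed supertiles, no two adjacent chairs receive the same colour; and (2) when two supertiles are placed adjacently anywhere in the tiling (in any of the finitely many ways two first-level supertiles can abut, which is governed by the finite local complexity), the chairs that meet across the supertile boundary also receive distinct colours. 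Since every chair in $C$ lies in a unique first-level supertile, and the relative orientation pattern of supertiles is itself just the chair substitution one level up, conditions (1) and (2) together guarantee a proper $3$-colouring of all of $C$.

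The main obstacle — and where the real work lies — is condition (2): verifying that the boundary colours are compatible across \emph{all} adjacencies of first-level supertiles. This requires enumerating the ways two supertiles can be neighbours. The cleverness of the displayed pictures (the \verb|\chairTopLeftSuper| etc.\ macros, which draw a supertile together with its surrounding supertiles) is precisely that they encode this check: one reads off the colours along each shared edge and confirms no monochromatic adjacency occurs. I would organize this as a finite case analysis, using the substitution structure to reduce the infinitely many local configurations to the finitely many second-level supertile patterns, and within each, inspect the colouring near the internal supertile boundaries. A subtlety to watch: the colouring rule must genuinely be orientation-dependent in a self-consistent way, i.e.\ the four rules must be "equivariant" enough that rotating a supertile rotates its colour pattern to match another of the four displayed rules — otherwise the induction up through levels breaks.

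Finally I would combine the pieces: the lower bound gives $\chi_f(C) \geq 3$, the explicit orientation-based rule gives a valid $3$-colouring hence $\chi_f(C) \leq 3$, so $\chi_f(C) = 3$, and the colouring is exactly the one obtained by the stated supertile recipe. I expect the write-up to be dominated by the adjacency-compatibility bookkeeping in step (2), which is best presented by pointing to the figures rather than by prose enumeration of coordinates.
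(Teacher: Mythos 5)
Your overall strategy is the same as the paper's: colour each chair according to the orientation of the first-level supertile containing it, then reduce the verification to a finite case analysis of how supertiles abut. Your lower-bound argument (three pairwise-adjacent chairs inside a first-level supertile give a $3$-cycle in the dual, so two face colours cannot suffice) is correct and is actually spelled out more explicitly than in the paper, which leaves the lower bound implicit.

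There is, however, a genuine gap in your step (2), and it is exactly where the real content of the proof lies. If you enumerate the possible colour sequences along the boundaries where supertiles meet (the paper does this at level two, along ``super-edges,'' which is what makes the enumeration finite and clean), you will find that the case analysis does \emph{not} close on its own: there are two juxtapositions in which a green chair on one side faces a green chair on the other side of a horizontal super-edge. Your proposal assumes the inspection will simply ``confirm no monochromatic adjacency occurs,'' but it will not; you need an additional idea to dispose of these apparent conflicts. The paper's resolution is geometric: in each of the two offending configurations, the two green chairs sit at a $270^\circ$ corner in such a way that they would have to overlap partially without coinciding, which is impossible in a tiling; hence those juxtapositions never actually occur. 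Without this exclusion argument (or some substitute, e.g.\ a finer analysis of which super-edge abutments are realized by the hierarchy), your case analysis stalls. Two smaller points: working with arbitrary abutments of \emph{first}-level supertiles, as you propose, makes the enumeration messier than necessary --- passing to second-level supertiles forces boundaries to match super-edge to super-edge; and the colouring is not rotation-equivariant on the nose but only up to swapping red and blue under a $90^\circ$ rotation, which is precisely what the paper exploits to deduce the vertical case from the horizontal one rather than redoing the analysis.
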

\begin{proof}
Consider the tiling of the plane with level 2 supertiles. Using the hypothesized colourings the four second level supertiles look like: 
\vspace{.3cm} 
\begin{center}
\begin{tikzpicture}[scale=.45] 
\chairTopLeftSuper
\end{tikzpicture}\hspace{0.9cm}
\begin{tikzpicture}[scale=.45] 
\chairTopRightSuper
\end{tikzpicture}

\vspace{0.9cm}
\begin{tikzpicture}[scale=.45] 
\chairBottomLeftSuper
\end{tikzpicture}\hspace{0.9cm}
\begin{tikzpicture}[scale=.45] 
\chairBottomRightSuper
\end{tikzpicture}
\end{center}


We will call the segment between two markings a super-edge. Each super-edge is made up of two or three edges.

It is clear that level two supertiles must meet super-edge to super-edge, and that each super-edge is either horizontal or vertical. We show now that there is no conflict.

For each horizontal super-edge, by inspecting the four colourings of the super-tiles we see that the colour below along the super-edge is one of the following

\begin{center}
\begin{tikzpicture}[scale=.2]
\draw[line width=.75mm] (0,0) -- (8,0);
\draw[line width=.75mm] (10,0) -- (18,0);
\draw[line width=.75mm] (20,0) -- (28,0);
\filldraw[draw=gray, fill=blue, opacity=.6] (0, 0) -- (4, 0) -- (4, -2) -- (0, -2) -- (0, 0);
\filldraw[draw=gray, fill=red, opacity=.6] (4, 0) -- (8, 0) -- (8, -2) -- (4, -2) -- (4, 0);
\filldraw[draw=gray, fill=green, opacity=.6] (10, 0) -- (12, 0) -- (12, -2) -- (10, -2) -- (10, 0);
\filldraw[draw=gray, fill=blue, opacity=.6] (12, 0) -- (14, 0) -- (14, -2) -- (12, -2) -- (12, 0);
\filldraw[draw=gray, fill=red, opacity=.6] (14, 0) -- (18, 0) -- (18, -2) -- (14, -2) -- (14, 0);
\filldraw[draw=gray, fill=blue, opacity=.6] (20, 0) -- (24, 0) -- (24, -2) -- (20, -2) -- (20, 0);
\filldraw[draw=gray, fill=red, opacity=.6] (24, 0) -- (26, 0) -- (26, -2) -- (24, -2) -- (24, 0);
\filldraw[draw=gray, fill=green, opacity=.6] (26, 0) -- (28, 0) -- (28, -2) -- (26, -2) -- (26, 0);
\end{tikzpicture}
\end{center}

Similarly, for each horizontal super-edge, the colours above along the super-edge is one of the following

\begin{center}
\begin{tikzpicture}[scale=.2]
\draw[line width=.75mm] (0,0) -- (8,0);
\draw[line width=.75mm] (10,0) -- (18,0);
\draw[line width=.75mm] (20,0) -- (28,0);
\filldraw[draw=gray, fill=red, opacity=.6] (0, 0) -- (4, 0) -- (4, 2) -- (0, 2) -- (0, 0);
\filldraw[draw=gray, fill=blue, opacity=.6] (4, 0) -- (8, 0) -- (8, 2) -- (4, 2) -- (4, 0);
\filldraw[draw=gray, fill=green, opacity=.6] (10, 0) -- (12, 0) -- (12, 2) -- (10, 2) -- (10, 0);
\filldraw[draw=gray, fill=red, opacity=.6] (12, 0) -- (14, 0) -- (14, 2) -- (12, 2) -- (12, 0);
\filldraw[draw=gray, fill=blue, opacity=.6] (14, 0) -- (18, 0) -- (18, 2) -- (14, 2) -- (14, 0);
\filldraw[draw=gray, fill=red, opacity=.6] (20, 0) -- (24, 0) -- (24, 2) -- (20, 2) -- (20, 0);
\filldraw[draw=gray, fill=blue, opacity=.6] (24, 0) -- (26, 0) -- (26, 2) -- (24, 2) -- (24, 0);
\filldraw[draw=gray, fill=green, opacity=.6] (26, 0) -- (28, 0) -- (28, 2) -- (26, 2) -- (26, 0);
\end{tikzpicture}
\end{center}

By inspection, the only possible conflicts are the green colours for the following two possible configurations:

\begin{center}
\begin{tikzpicture}[scale=.2]
\draw[line width=.75mm] (10,0) -- (18,0);
\draw[line width=.75mm] (20,0) -- (28,0);
\filldraw[draw=gray, fill=green, opacity=.6] (10, 0) -- (12, 0) -- (12, -2) -- (10, -2) -- (10, 0);
\filldraw[draw=gray, fill=blue, opacity=.6] (12, 0) -- (14, 0) -- (14, -2) -- (12, -2) -- (12, 0);
\filldraw[draw=gray, fill=red, opacity=.6] (14, 0) -- (18, 0) -- (18, -2) -- (14, -2) -- (14, 0);
\filldraw[draw=gray, fill=blue, opacity=.6] (20, 0) -- (24, 0) -- (24, -2) -- (20, -2) -- (20, 0);
\filldraw[draw=gray, fill=red, opacity=.6] (24, 0) -- (26, 0) -- (26, -2) -- (24, -2) -- (24, 0);
\filldraw[draw=gray, fill=green, opacity=.6] (26, 0) -- (28, 0) -- (28, -2) -- (26, -2) -- (26, 0);
\filldraw[draw=gray, fill=green, opacity=.6] (10, 0) -- (12, 0) -- (12, 2) -- (10, 2) -- (10, 0);
\filldraw[draw=gray, fill=red, opacity=.6] (12, 0) -- (14, 0) -- (14, 2) -- (12, 2) -- (12, 0);
\filldraw[draw=gray, fill=blue, opacity=.6] (14, 0) -- (18, 0) -- (18, 2) -- (14, 2) -- (14, 0);
\filldraw[draw=gray, fill=red, opacity=.6] (20, 0) -- (24, 0) -- (24, 2) -- (20, 2) -- (20, 0);
\filldraw[draw=gray, fill=blue, opacity=.6] (24, 0) -- (26, 0) -- (26, 2) -- (24, 2) -- (24, 0);
\filldraw[draw=gray, fill=green, opacity=.6] (26, 0) -- (28, 0) -- (28, 2) -- (26, 2) -- (26, 0);
\end{tikzpicture}
\end{center}

We claim that these two configurations are not possible (due to the $270^\circ$ angle appearing at the green colour), and hence no horizontal conflict occurs. 

Let us first look at the first possible conflict. 

These colours occur above a super-edge only in one level 2 super-tile, and the green tile above the super-tile is in the following position

\begin{center}
\begin{tikzpicture}[scale=.2]
\draw[line width=.75mm] (10,0) -- (18,0);
\draw[thick] (10, 0) -- (12, 0) -- (12, 2) -- (8, 2) -- (8, -2)--(10,-2)--(10,0);
\filldraw[draw=gray, fill=green, opacity=.6] (10, 0) -- (12, 0) -- (12, 2) -- (8, 2) -- (8, -2)--(10,-2)--(10,0);
\filldraw[draw=gray, fill=red, opacity=.6] (12, 0) -- (14, 0) -- (14, 2) -- (12, 2) -- (12, 0);
\filldraw[draw=gray, fill=blue, opacity=.6] (14, 0) -- (18, 0) -- (18, 2) -- (14, 2) -- (14, 0);
\end{tikzpicture}
\end{center}

Same way, the green tile above the super-tile is in the following position

\begin{center}
\begin{tikzpicture}[scale=.2]
\draw[line width=.75mm] (10,0) -- (18,0);
\draw[thick] (10, 0) -- (12, 0) -- (12, -2) -- (8, -2) -- (8, 2)--(10,2)--(10,0);
\filldraw[draw=gray, fill=green, opacity=.6] (10, 0) -- (12, 0) -- (12, -2) -- (8, -2) -- (8, 2)--(10,2)--(10,0);
\filldraw[draw=gray, fill=blue, opacity=.6] (12, 0) -- (14, 0) -- (14, -2) -- (12, -2) -- (12, 0);
\filldraw[draw=gray, fill=red, opacity=.6] (14, 0) -- (18, 0) -- (18, -2) -- (14, -2) -- (14, 0);
\end{tikzpicture}
\end{center}

Then, the two green tiles will intersect without completely overlying, which is not possible.

Similarly, in the second potential conflict, the two green tiles would be in the following positions:
\begin{center}
\begin{tikzpicture}[scale=.2]
\draw[line width=.75mm] (20,0) -- (28,0);
\draw[gray] (28, 0) -- (26, 0) -- (26, 2) -- (30, 2) -- (30, -2)--(28,-2)--(28,0);
\filldraw[draw=gray, fill=red, opacity=.6] (20, 0) -- (24, 0) -- (24, 2) -- (20, 2) -- (20, 0);
\filldraw[draw=gray, fill=blue, opacity=.6] (24, 0) -- (26, 0) -- (26, 2) -- (24, 2) -- (24, 0);
\filldraw[draw=gray, fill=green, opacity=.6] (28, 0) -- (26, 0) -- (26, 2) -- (30, 2) -- (30, -2)--(28,-2)--(28,0);
\end{tikzpicture}

\begin{tikzpicture}[scale=.2]
\draw[line width=.75mm] (20,0) -- (28,0);
\draw[gray] (28, 0) -- (26, 0) -- (26, -2) -- (30, -2) -- (30, 2)--(28,2)--(28,0);
\filldraw[draw=gray, fill=blue, opacity=.6] (20, 0) -- (24, 0) -- (24, -2) -- (20, -2) -- (20, 0);
\filldraw[draw=gray, fill=red, opacity=.6] (24, 0) -- (26, 0) -- (26, -2) -- (24, -2) -- (24, 0);
\filldraw[draw=gray, fill=green, opacity=.6] (28, 0) -- (26, 0) -- (26, -2) -- (30, -2) -- (30, 2)--(28,2)--(28,0);
\end{tikzpicture}
\end{center}

Again, this situation is not possible, as it would lead to two tiles partially overlying. We therefore have no horizontal conflict.

\smallskip

Vertical conflicts are eliminated via a similar analysis. Or alternately, we can eliminate the vertical conflicts by observing that rotating each coloured level 2 super-tile by $90^\circ$ or $270^\circ$ gives one of our coloured level 2 super-tiles with the colours red and blue interchanged. It follows from here that rotating our entire tiling by $90^\circ$ simply interchanges the red and blue colours in our colouring scheme. Then, by the first part of the argument, after rotating the tiling by $90^\circ$ we have no horizontal conflict, and hence the original colouring scheme has no vertical conflict.
\end{proof}

Here is a larger patch of this face colouring which highlights its aperiodic structure.

\[
\centering
\begin{tikzpicture}[scale=.25]
 \foreach \x/ \y in {0/0,2/2,4/4,6/6,8/8,10/10,12/12,14/14,16/16,18/18,20/20,
 22/22,24/24,26/26,28/28,8/0,16/0,24/0,0/8,16/8,0/16,2/18,8/16,24/16,0/24,16/24,18/2, 20/4, 4/20}
{
\filldraw[draw=gray, fill=red, opacity=.6] (\x, \y) -- (\x, \y+2) -- (\x+1,\y+ 2) -- (\x+1, \y+1) -- (\x+2, \y+1) -- (\x+2,\y) -- cycle;
\filldraw[draw=gray, fill=green, opacity=.6] (\x+1, \y+1) -- (\x+1,\y+ 3) -- (\x+2,\y+3) -- (\x+2,\y+2) -- (\x+3, \y+2) -- (\x+3,\y+ 1) -- cycle;
\filldraw[draw=gray, fill=blue, opacity=.6] (\x, \y+2) -- (\x,\y+ 4) -- (\x+2, \y+4) -- (\x+2,\y+ 3) -- (\x+1, \y+3) -- (\x+1,\y+ 2) -- cycle;
\filldraw[draw=gray, fill=blue, opacity=.6] (\x+2, \y) -- (\x+2, \y+1) -- (\x+3, \y+1) -- (\x+3, \y+2) -- (\x+4, \y+2) -- (\x+4, \y) -- cycle;
};
\foreach \x/ \y in {4/0, 12/0, 20/0, 28/0, 8/4, 24/4, 12/8, 20/8, 28/8, 4/16, 20/16, 28/16, 26/2, 22/6, 10/2, 18/10, 16/12, 26/18, 24/20, 28/24, 12/24}
{
\filldraw[draw=gray, fill=red, opacity=.6] (\x, \y) -- (\x,\y+2) -- (\x+1, \y+2) -- (\x+1, \y+1) -- (\x+2, \y+1) -- (\x+2, \y) -- cycle;
\filldraw[draw=gray, fill=red, opacity=.6] (\x+2, \y+3) -- (\x+2, \y+4) -- (\x+4,\y+ 4) -- (\x+4, \y+2) -- (\x+3, \y+2) -- (\x+3, \y+3) -- cycle;
\filldraw[draw=gray, fill=blue, opacity=.6] (\x+2, \y) -- (\x+2, \y+1) -- (\x+3,\y+ 1) -- (\x+3, \y+2) -- (\x+4,\y+ 2) -- (\x+4, \y) -- cycle;
\filldraw[draw=gray, fill=green, opacity=.6] (\x+1,\y+ 1) -- (\x+1, \y+2) -- (\x+2, \y+2) -- (\x+2,\y+3) -- (\x+3,\y+3) -- (\x+3,\y+1) -- cycle;
};
 \foreach \x/ \y in { 0/4 , 0/12, 0/20, 0/28, 2/10, 2/26, 4/8, 4/24, 6/22, 8/12, 8/20, 8/28, 10/18, 12/16, 16/4, 24/12, 16/20, 16/28,  18/26, 20/24, 24/28 }
{
\filldraw[draw=gray, fill=red, opacity=.6] (\x, \y) -- (\x, \y+2) -- (\x+1, \y+2) -- (\x+1, \y+1) -- (\x+2, \y+1) -- (\x+2, \y) -- cycle;
\filldraw[draw=gray, fill=red, opacity=.6] (\x+2, \y+3) -- (\x+2, \y+4) -- (\x+4, \y+4) -- (\x+4, \y+2) -- (\x+3, \y+2) -- (\x+3, \y+3) -- cycle;
\filldraw[draw=gray, fill=green, opacity=.6] (\x+1, \y+1) -- (\x+1, \y+3) -- (\x+3, \y+3) -- (\x+3, \y+2) -- (\x+2, \y+2) -- (\x+2, \y+1) -- cycle;
\filldraw [draw=gray, fill=blue, opacity=.6] (\x, \y+2) -- (\x, \y+4) -- (\x+2, \y+4) -- (\x+2, \y+3) -- (\x+1, \y+3) -- (\x+1, \y+2) -- cycle;
};
 \foreach \x/ \y in {12/6, 28/6, 24/10, 26/12, 28/14, 20/14, 4/14, 12/22, 8/26, 10/28, 12/30, 4/30, 20/30, 28/22}
{
\filldraw[draw=gray, fill=red, opacity=.6] (\x+2, \y+1) -- (\x+2, \y+2) -- (\x+4, \y+2) -- (\x+4, \y) -- (\x+3, \y) -- (\x+3, \y+1) -- cycle;
\filldraw[draw=gray, fill=green, opacity=.6] (\x+1, \y) -- (\x+1, \y+1) -- (\x+3, \y+1) -- (\x+3, \y-1) -- (\x+2, \y-1) -- (\x+2, \y) -- cycle;
\filldraw[draw=gray, fill=blue, opacity=.6] (\x, \y) -- (\x, \y+2) -- (\x+2, \y+2) -- (\x+2, \y+1) -- (\x+1, \y+1) -- (\x+1, \y) -- cycle;
\filldraw[draw=gray, fill=blue, opacity=.6] (\x+2, \y-2) -- (\x+2, \y-1) -- (\x+3, \y-1) -- (\x+3, \y) -- (\x+4, \y) -- (\x+4, \y-2) -- cycle;
};
\filldraw[draw=gray, fill=red, opacity=.6] (30, 30) -- (30,32) -- (31,32) -- (31,31) -- (32, 31) -- (32,30) -- cycle;
\filldraw[draw=gray, fill=green, opacity=.6] (31, 31) -- (31,32) -- (32,32) -- (32,31) -- cycle;
\draw[line width=.75mm]  (0,0) -- (32,0);
\draw[line width=.75mm]  (0,0) -- (0,32);
\draw[line width=.75mm]  (32,32) -- (32,0);
\draw[line width=.75mm]  (32,32) -- (0,32);
\draw[line width=.75mm]  (32,16) -- (16,16);
\draw[line width=.75mm]  (16,32) -- (16,16);
\draw[line width=.5mm]  (24,16) -- (24,8)--(8,8)--(8,24)--(16,24);
\draw[line width=.5mm]  (0,16) -- (8,16);
\draw[line width=.5mm]  (16,0) -- (16,8);
\draw[line width=.5mm]  (24,32) -- (24,24)--(32,24);
\draw[line width=.25mm]  (8,0) -- (8,4)--(4,4)--(4,8)--(0,8);
\draw[line width=.25mm]   (8,4)--(12,4)--(12,8);
\draw[line width=.25mm]   (4,8)--(4,12)--(8,12);
\draw[line width=.25mm]  (20,8) -- (20,4)--(28,4)--(28,12)--(24,12);
\draw[line width=.25mm]   (24,4)--(24,0);
\draw[line width=.25mm]   (28,8)--(32,8);
\draw[line width=.25mm]  (8,20) -- (4,20)--(4,28)--(12,28)--(12,24);
\draw[line width=.25mm]   (8,28)--(8,32);
\draw[line width=.25mm]   (0,24)--(4,24);
\draw[line width=.25mm]  (24,28) -- (20,28)--(20,20)--(28,20)--(28,24);
\draw[line width=.25mm]   (16,24)--(20,24);
\draw[line width=.25mm]   (24,16)--(24,20);
\draw[line width=.25mm]  (16,20) -- (12,20)--(12,12)--(20,12)--(20,16);
\draw[line width=.25mm]   (8,16)--(12,16);
\draw[line width=.25mm]   (16,8)--(16,12);
\draw[line width=.25mm]  (28,32) -- (28,28)--(32,28);
\draw[line width=.15mm]  (6,4) -- (6,2)--(2,2)--(2,6)--(4,6);
\draw[line width=.15mm]   (0,4)--(2,4);
\draw[line width=.15mm]   (4,0)--(4,2);
\draw[line width=.15mm]  (10,4) -- (10,2)--(14,2)--(14,6)--(12,6);
\draw[line width=.15mm]   (14,4)--(16,4);
\draw[line width=.15mm]   (12,0)--(12,2);
\draw[line width=.15mm]  (8,10) -- (6,10)--(6,6)--(10,6)--(10,8);
\draw[line width=.15mm]   (4,8)--(6,8);
\draw[line width=.15mm]   (8,4)--(8,6);
\draw[line width=.15mm]  (6,12) -- (6,14)--(2,14)--(2,10)--(4,10);
\draw[line width=.15mm]   (4,14)--(4,16);
\draw[line width=.15mm]   (0,12)--(2,12);
\draw[line width=.15mm]  (20,6) -- (18,6)--(18,2)--(22,2)--(22,4);
\draw[line width=.15mm]   (16,4)--(18,4);
\draw[line width=.15mm]   (20,0)--(20,2);
\draw[line width=.15mm]  (26,4) -- (26,2)--(30,2)--(30,6)--(28,6);
\draw[line width=.15mm]   (30,4)--(32,4);
\draw[line width=.15mm]   (28,0)--(28,2);
\draw[line width=.15mm]  (22,8) -- (22,6)--(26,6)--(26,10)--(24,10);
\draw[line width=.15mm]   (24,4)--(24,6);
\draw[line width=.15mm]   (26,8)--(28,8);
\draw[line width=.15mm]  (28,10) -- (30,10)--(30,14)--(26,14)--(26,12);
\draw[line width=.15mm]   (30,12)--(32,12);
\draw[line width=.15mm]   (28,14)--(28,16);
\draw[line width=.15mm]  (12,14) -- (10,14)--(10,10)--(14,10)--(14,12);
\draw[line width=.15mm]   (12,8)--(12,10);
\draw[line width=.15mm]   (8,12)--(10,12);
\draw[line width=.15mm]  (16,18) -- (14,18)--(14,14)--(18,14)--(18,16);
\draw[line width=.15mm]   (16,12)--(16,14);
\draw[line width=.15mm]   (12,16)--(14,16);
\draw[line width=.15mm]  (20,22) -- (18,22)--(18,18)--(22,18)--(22,20);
\draw[line width=.15mm]   (20,16)--(20,18);
\draw[line width=.15mm]   (16,20)--(18,20);
\draw[line width=.15mm]  (24,26) -- (22,26)--(22,22)--(26,22)--(26,24);
\draw[line width=.15mm]   (24,20)--(24,22);
\draw[line width=.15mm]   (20,24)--(22,24);
\draw[line width=.15mm]  (28,30) -- (26,30)--(26,26)--(30,26)--(30,28);
\draw[line width=.15mm]   (28,24)--(28,26);
\draw[line width=.15mm]   (24,28)--(26,28);
\draw[line width=.15mm]   (30,32)--(30,30)--(32,30);
\draw[line width=.15mm]  (18,12) -- (18,10)--(22,10)--(22,14)--(20,14);
\draw[line width=.15mm]   (20,8)--(20,10);
\draw[line width=.15mm]   (22,12)--(24,12);
\draw[line width=.15mm]  (26,20) -- (26,18)--(30,18)--(30,22)--(28,22);
\draw[line width=.15mm]   (28,16)--(28,18);
\draw[line width=.15mm]   (30,20)--(32,20);
\draw[line width=.15mm]  (12,18) -- (10,18)--(10,22)--(14,22)--(14,20);
\draw[line width=.15mm]   (8,20)--(10,20);
\draw[line width=.15mm]   (12,22)--(12,24);
\draw[line width=.15mm]  (20,26) -- (18,26)--(18,30)--(22,30)--(22,28);
\draw[line width=.15mm]   (16,28)--(18,28);
\draw[line width=.15mm]   (20,30)--(20,32);
\draw[line width=.15mm]  (4,26) -- (2,26)--(2,30)--(6,30)--(6,28);
\draw[line width=.15mm]   (0,28)--(2,28);
\draw[line width=.15mm]   (4,30)--(4,32);
\draw[line width=.15mm]  (8,22) -- (6,22)--(6,26)--(10,26)--(10,24);
\draw[line width=.15mm]   (4,24)--(6,24);
\draw[line width=.15mm]   (8,26)--(8,28);
\draw[line width=.15mm]  (4,22) -- (2,22)--(2,18)--(6,18)--(6,20);
\draw[line width=.15mm]   (4,16)--(4,18);
\draw[line width=.15mm]   (0,20)--(2,20);
\draw[line width=.15mm]  (10,28) -- (10,30)--(14,30)--(14,26)--(12,26);
\draw[line width=.15mm]   (12,30)--(12,32);
\draw[line width=.15mm]   (14,28)--(16,28);
\end{tikzpicture}
\]

\section{Ammann--Beenker Tiling}

The Ammann--Beenker tiling is a substitution tiling with two prototiles, an isosceles right triangle with side lengths $1,1,\sqrt{2}$ and a rhombus of side $1$ and with an angle of $45^\circ$, and the following inflation rule:
\[
\centering
\begin{tikzpicture}[scale=.4]
\draw[gray, thick] (0,0) -- (2.828,0);
\draw[gray, thick] (0,0) -- (1.414,1.414);
\draw[gray, thick] (2.828,0) -- (1.414,1.414);
\draw[-{Straight Barb[left]}, gray, thick] (0,0) -- (1.414,0);
\draw[line width=0.5mm, ->] (3,1) -- (4,1);
\draw[gray, thick] (5,0) -- (8.414,3.414);
\draw[-{Straight Barb[left]}, gray, thick] (7,2)--(6,1);
\draw[gray, thick] (11.828,0) -- (8.414,3.414);
\draw[gray, thick] (11.828,0) -- (5,0);
\draw[-{Straight Barb[right]}, gray, thick] (11.828,0)--(8.414,0);
\draw[-{Straight Barb[left]}, gray, thick] (10.414,1.414)--(9.414,2.414);
\draw[gray, thick] (5,0) -- (7,0);
\draw[gray, thick] (7,0) -- (7,2);
\draw[gray, thick] (7,0) -- (9.828,0);
\draw[gray, thick] (7,0) -- (8.414,1.414);
\draw[gray, thick] (9.828,0) -- (8.414,1.414);
\draw[gray, thick]  (8.414,1.414) -- (8.414,3.414);
\draw[gray, thick]  (8.414,1.414) -- (10.414,1.414);
\end{tikzpicture}
\hspace{12pt}
\begin{tikzpicture}[scale=.4]
\draw[gray, thick] (0,-6) -- (2,-6);
\draw[gray, thick] (1.414,-4.586) -- (0,-6);
\draw[gray, thick] (1.414,-4.586) -- (3.414,-4.586);
\draw[gray, thick] (2,-6) -- (3.414,-4.586);
\draw[line width=0.5mm, ->] (4,-5) -- (5,-5);
\draw[gray, thick] (6,-6) -- (9.414,-2.586);
\draw[-{Straight Barb[right]}, gray, thick] (7.414,-4.586)--(8.414,-3.586);
\draw[gray, thick] (10.828,-6) -- (6,-6);
\draw[gray, thick] (14.242,-2.586) -- (9.414,-2.586);
\draw[gray, thick] (14.242,-2.586) -- (10.828,-6);
\draw[gray, thick] (7.414,-4.586) -- (6,-6);
\draw[gray, thick] (7.414,-4.586) -- (9.414,-4.586);
\draw[gray, thick] (8,-6) -- (9.414,-4.586);
\draw[-{Straight Barb[left]}, gray, thick] (8,-6)--(9.414,-6);
\draw[gray, thick] (9.414,-2.586) -- (9.414,-4.586);
\draw[gray, thick] (10.828,-6) -- (9.414,-4.586);
\draw[gray, thick] (10.828,-6) -- (10.828,-4);
\draw[gray, thick] (12.828,-4) -- (10.828,-4);
\draw[-{Straight Barb[right]}, gray, thick] (12.828,-4)--(11.828,-5);
\draw[gray, thick] (9.414,-2.586) -- (10.828,-4);
\draw[gray, thick] (12.242,-2.586) -- (10.828,-4);
\draw[-{Straight Barb[left]}, gray, thick] (12.242,-2.586)--(10.828,-2.586);
\end{tikzpicture}
\]
For more details about this tiling we recommend \cite{TAO,Bee,TE}.

\smallskip

We start by finding the chromatic number of the Ammann--Beenker tiling.

\begin{theorem}
Let $AB$ denote the Ammann--Beenker tiling. AB has chromatic number $3$. An explicit 3-colouring can be induced by a two colouring of the borders of the second level supertiles. 
\end{theorem}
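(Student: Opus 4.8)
The plan is to establish the two inequalities $\chi(AB)\ge 3$ and $\chi(AB)\le 3$ separately. For the lower bound, observe that one of the two Ammann--Beenker prototiles is the isosceles right triangle, so the tiling has a triangular face and the associated planar graph contains a $3$-cycle. (If one worries that the hypotenuse of a triangle tile might be subdivided by a neighbouring vertex, one checks from the substitution rule that the three sides of every triangle tile are single edges of the graph; alternatively one simply exhibits an odd cycle in a small explicit patch.) Since the graph then has an odd face, Corollary~\ref{cor:bip} gives that it is not bipartite, so $\chi(AB)\ge 3$.

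For the upper bound I would build an explicit proper $3$-colouring from the level-$2$ supertile hierarchy, exactly in the spirit of the chair-tiling theorem above. Let $H$ be the subgraph consisting of all edges of the tiling that lie on the boundary of some level-$2$ supertile, and call its vertices \emph{border vertices}. First I would check that the boundary cycle of each of the (finitely many) level-$2$ supertile types has even length; this forces $H$ to be bipartite, because $H$ is connected and its bounded faces are precisely the open level-$2$ supertiles, so any finite cycle of $H$ bounds a finite union of closed supertiles and equals, over $\mathbb{Z}/2$, the sum of their boundary cycles -- a sum of even cycles, hence even. Being connected and bipartite, $H$ admits a proper $2$-colouring with two colours $c_1,c_2$, unique up to swapping them.

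Fixing such a $2$-colouring of the border vertices, the main step is to verify that for each level-$2$ supertile type the induced $2$-colouring of its boundary cycle extends to a proper $3$-colouring -- using the third colour $c_3$ on the interior vertices -- of the whole finite subgraph carried by that closed supertile. By the $c_1\leftrightarrow c_2$ symmetry this is independent of which of the two boundary colourings occurs and of the placement or orientation of the supertile, so it is a finite computation, which I would record by drawing coloured pictures of the decorated level-$2$ supertiles, just as for the chair. Granting this, the pieces glue into a colouring of all of $\RR^2$: each vertex is either a border vertex, coloured via $H$, or lies in the interior of a unique level-$2$ supertile, coloured by the chosen extension; each edge lies in the closure of some level-$2$ supertile, where it has been properly coloured; and an edge on a shared boundary gets consistent colours from both adjacent supertiles, since each uses the $H$-colouring there. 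Hence $AB$ is properly $3$-coloured and $\chi(AB)\le 3$, which together with the lower bound yields $\chi(AB)=3$. (The gluing can instead be phrased through Theorem~\ref{thm: deBEr}(a), by $3$-colouring each finite subgraph inside a sufficiently large patch of level-$2$ supertiles.)

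The hard part will be this last verification -- that the forced border $2$-colouring genuinely extends to a proper $3$-colouring of every level-$2$ supertile -- since this is exactly what distinguishes level $2$ from level $1$ as the working scale; in particular it must turn out that no triangle tile has all three of its vertices lying on $H$, as that would force three distinct colours on a $2$-colourable set. As in the chair-tiling argument, this reduces to a finite inspection of the decorated-supertile figures, and there is no freedom to get the global $2$-colouring ``wrong'', since on the connected graph $H$ it is determined up to interchanging $c_1$ and $c_2$.
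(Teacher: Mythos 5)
Your overall architecture (two-colour a border subgraph coming from the level-2 supertile decomposition, then finish the interiors with a third colour) matches the paper's, and your lower bound is fine: the triangle prototile gives a triangular face, hence an odd cycle, so by Corollary~\ref{cor:bip} the graph is not bipartite and $\chi(AB)\ge 3$. However, your upper-bound construction breaks at the very first step you propose to ``check'': it is \emph{not} true that every level-2 supertile boundary is an even cycle. The boundary of the triangle supertile is an \emph{odd} cycle --- already at level one it is a $7$-cycle (each inflated leg splits into a short edge plus a tile hypotenuse, and the inflated hypotenuse splits as short--hypotenuse--short), and the same parity persists at level two. Consequently the subgraph $H$ you define, namely all edges lying on the boundary of some level-2 supertile, has odd faces and is therefore not bipartite; the proper $2$-colouring on which your whole gluing argument rests does not exist, and your $\mathbb{Z}/2$ parity argument has nothing to sum over.

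The missing idea --- which is precisely the paper's key observation --- is that the triangle supertiles cannot be treated individually. Because the supertile hypotenuse has length $\sqrt{2}$ times the inflation power while all other boundary segment lengths are incommensurable with it, a supertile hypotenuse can only meet another supertile hypotenuse; hence the level-2 triangle supertiles pair up along their hypotenuses into squares. One must take the border subgraph of the coarser decomposition into level-2 rhombi and these squares: the rhombus borders are $20$-cycles and the square borders are even, so \emph{that} subgraph is bipartite, and the vertices interior to a shared hypotenuse are demoted to interior vertices where the third colour is available. With this correction your finite inspection of the decorated supertiles and your gluing argument go through essentially as in the paper; without it, the construction fails.
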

\begin{proof}
Consider the sub-graph of the AB tiling generated by the borders of second level super tiles. The borders of the rhombuses are 20-cycles and thus bipartite. For the triangles, while their borders are not bipartite they always come in pairs forming squares (because the $\sqrt 2$ diagonals need to match up) which have a bipartite border. Thus, we can 2-colour these borders and 3-colour the interiors:
\begin{center}
    \includegraphics[width=0.8\textwidth]{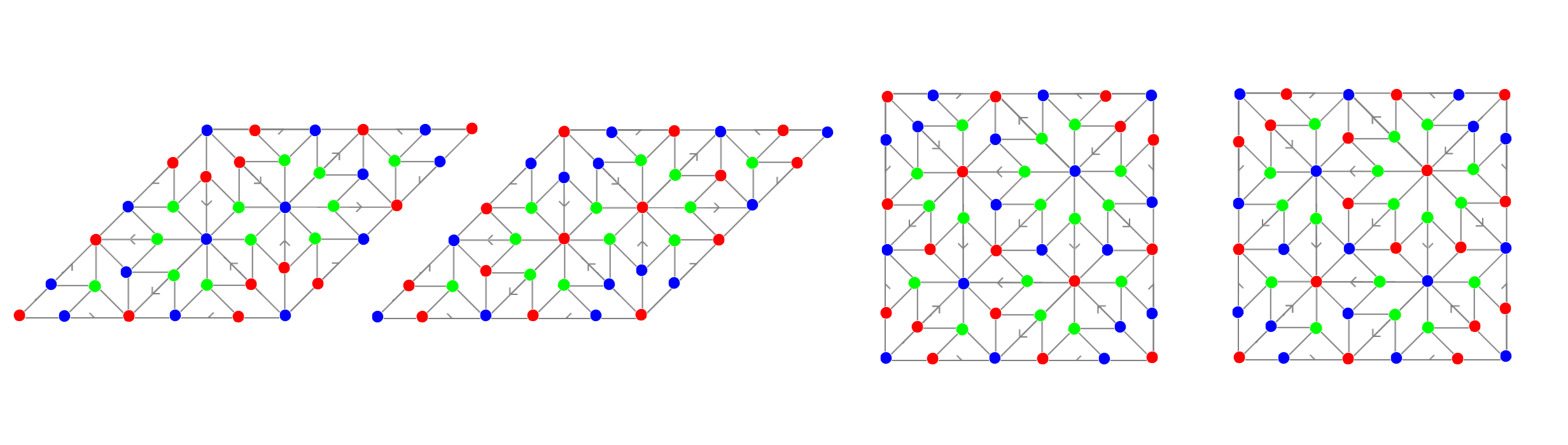}
\end{center}
Since there are no conflicts inside, it follows that the AB tiling is 3-colourable. 
\begin{center}
    \includegraphics[width=0.6\textwidth]{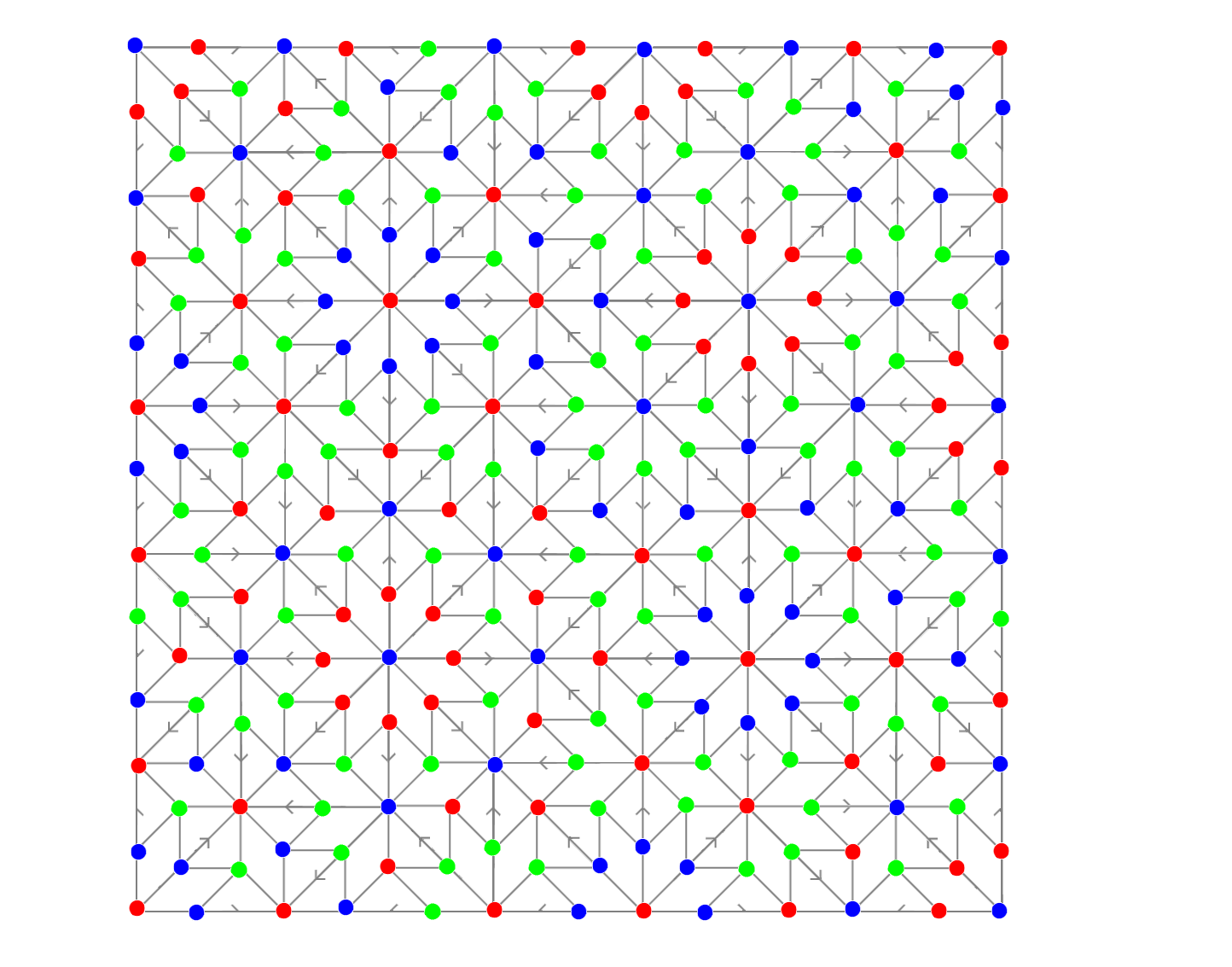}
\end{center}
\end{proof}

Next, we look at the chromatic index.

\begin{proposition}
Let $AB$ denote the Ammann--Beenker tiling, then $\chi'(AB)=8$. An explicit four colouring of the edges of AB is given by the Directional Alternating Colouring Method for the vectors $\vec{u_1} =\begin{bmatrix}1\\0\end{bmatrix}, \vec{u_2} =\begin{bmatrix}1\\1\end{bmatrix},\vec{u_3} =\begin{bmatrix}0\\1\end{bmatrix}, \vec{u_2} =\begin{bmatrix}1\\-1\end{bmatrix} $.
\end{proposition}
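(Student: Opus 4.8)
The proposition has two halves: the explicit colouring (which yields $\chi'(AB)\le 8$) and the exact value $\chi'(AB)=8$. The plan is to obtain $\le 8$ from Theorem~\ref{thm:direct colouring} and $\ge 8$ from the existence of a single vertex of degree $8$, the two bounds then squeezing to give equality.

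For the upper bound I would apply Theorem~\ref{thm:direct colouring} with $n=4$ and the vectors $\vec{u_1},\vec{u_2},\vec{u_3},\vec{u_4}$ listed in the statement. What must be checked is that every edge of the $AB$ tiling is a straight segment parallel to one of these four pairwise non-parallel directions; equivalently, that all edges point in directions that are integer multiples of $45^\circ$. This is visibly true for the edges of the two prototiles, and inspecting the displayed inflation rule one sees that each prototile is subdivided into a patch all of whose edges again point in such directions. By induction on the supertile level the property passes to every tile, hence to every edge of the graph, and Theorem~\ref{thm:direct colouring} then produces the stated colouring together with $\chi'(AB)\le 8$.

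For the lower bound, Corollary~\ref{cor:viz} gives $\chi'(AB)\ge \Delta(AB)$, so it suffices to exhibit a vertex of degree $8$. The $AB$ tiling contains a vertex at which eight copies of the $45^\circ$ rhombus surround a common point, forming a regular octagonal star (it can be found, for instance, as the centre of the $8$-fold symmetric element of the hull, and is in any case forced inside a sufficiently large supertile); consecutive rhombi share an edge emanating from this vertex, giving eight edges in all, so its degree is $8$. Hence $\Delta(AB)\ge 8$ and therefore $\chi'(AB)\ge 8$; together with the previous paragraph this gives $\chi'(AB)=8$. Note that the two bounds also force $\Delta(AB)=8$, so one could instead deduce $\chi'(AB)=\Delta(AB)=8$ from Theorem~\ref{thm:greaterthan7}, reading Theorem~\ref{thm:direct colouring} merely as the source of an explicit optimal colouring.

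The appeals to Theorem~\ref{thm:direct colouring} and Corollary~\ref{cor:viz} are immediate; the substance lies in the two geometric facts about the tiling — that the substitution keeps all edges in the four fixed directions, and that the octagonal star of eight rhombi genuinely occurs as a legal patch. The first is a finite but slightly tedious inspection of the inflation rule; the second is where I expect the only real care to be needed, since one must confirm the star is actually realised in $AB$ and not merely combinatorially conceivable.
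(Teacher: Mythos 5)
Your proposal is correct and follows essentially the same route as the paper: the upper bound comes from the Directional Alternating Colouring Method applied to the four $45^\circ$-multiple directions, and the lower bound from exhibiting a degree-$8$ vertex so that $\chi'(AB)\ge\Delta=8$. The paper simply asserts the existence of such a vertex (visible in its figure), whereas you identify it concretely as the eightfold star of rhombi; both are fine.
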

\begin{proof}
Since the orientation of every edge is an integer multiple of $45^\circ$, there are four orientations for the edges. The AB has a vertex of degree eight thus, the colouring follows immediately from Theorem 2.7.
\begin{center}
    \includegraphics[width=0.8\textwidth]{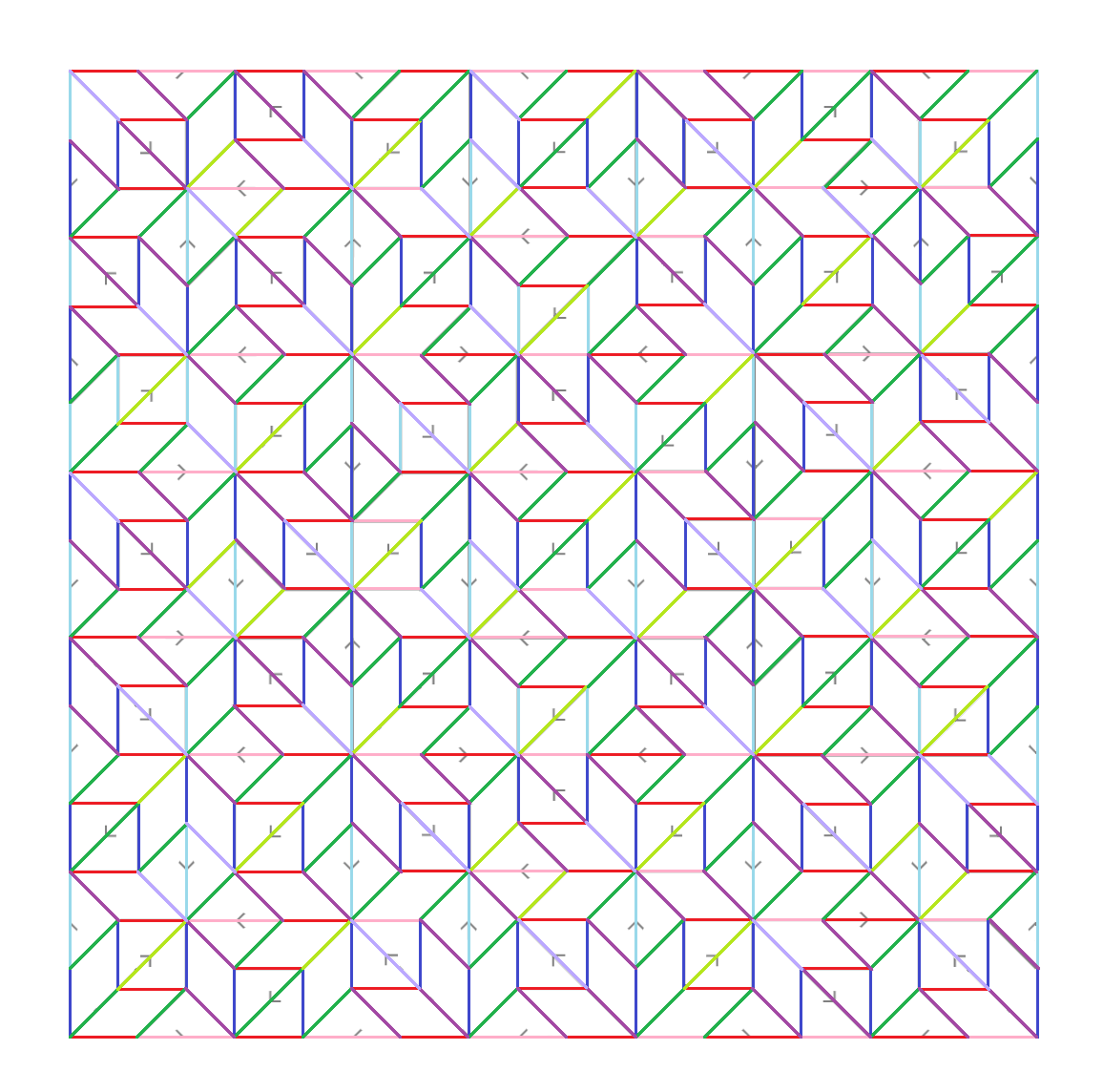}
\end{center}
\end{proof}

We complete the section by colouring the faces of the tiling.

\begin{theorem} Let $AB$ denote the Ammann--Beenker tiling.
The smallest number of colours needed to colour the faces of $AB$ is $2$. Moreover, a 2-colouring of the faces can be obtained by colouring the prototiles based on there orientation.
\end{theorem}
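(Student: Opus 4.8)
Since $AB$ contains adjacent tiles, at least two colours are needed, so the whole content is the upper bound. The plan is to recast it: a $2$-colouring of the faces of $AB$ is the same thing as a proper $2$-colouring of the (connected) dual graph, so it suffices to show that dual graph is bipartite, i.e.\ that every cycle in it has even length. A cycle in the dual bounds a disc containing a finite vertex set $S$ of $AB$, and it crosses exactly $\sum_{v\in S}\deg(v)-2|E(S)|$ edges, hence an even number if and only if $\sum_{v\in S}\deg(v)$ is even; consequently the dual graph is bipartite precisely when \emph{every vertex of $AB$ has even degree}. (One could instead reduce to finite patches via Theorem~\ref{thm: deBEr}(c); either way, finite local complexity means only the finitely many vertex stars of the tiling have to be examined.)

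So the key step is to verify that an even number of tiles meets at each vertex. Every corner angle is $45^\circ$, $90^\circ$ or $135^\circ$ and the angles around a vertex sum to $360^\circ$, so if $n_1,n_2,n_3$ denote the numbers of corners of the three kinds then $n_1+2n_2+3n_3=8$ and the number $n_1+n_2+n_3$ of tiles at the vertex is $\equiv n_2 \pmod 2$. Since only the right angle of a triangle is a $90^\circ$ corner, it remains to show that an even number of triangle right angles occurs at every vertex. To control this I would use the pairing already exploited for the chromatic number: the $\sqrt2$ hypotenuses force the triangles of $AB$ to come in hypotenuse-glued pairs forming squares, and together with finite local complexity this yields the finite vertex atlas of $AB$ --- with the extra bookkeeping, in the triangle version, of how each square's diagonal is placed --- which one then inspects case by case. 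I expect this enumeration to be the main obstacle: conceptually it is a finite, purely local check, but it depends on getting the Ammann--Beenker vertex atlas exactly right.

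It remains to exhibit the colouring in the advertised orientation form. Here I would display the colours assigned to the $8$ oriented triangles and $4$ oriented rhombi and verify properness in the self-similar manner of the chair tiling proof: substituting a coloured oriented prototile produces a patch that is internally properly coloured and whose colour pattern along each boundary super-edge depends only on the orientation of the supertile; since level-$n$ supertiles meet edge-to-edge exactly as the prototiles do, one then lists the finitely many colour patterns that can occur on either side of a super-edge and checks, for each abutting pair, that the colours differ --- or that the abutment is geometrically impossible, in the spirit of the $270^\circ$-angle argument used for the chair. Alternatively, once the even-degree property is in hand one has the ``parity of the number of edges crossed from a fixed base tile'' colouring for free, and the only remaining point is that this parity depends on a tile only through its orientation, which follows because a proper $2$-colouring of a patch is unique up to swapping the two colours, so the colour of a tile relative to the colours inside its supertile is orientation-determined and this propagates under the substitution.
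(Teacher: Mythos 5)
Your route is genuinely different from the paper's: you reduce face $2$-colourability to bipartiteness of the dual graph, hence to every vertex of $AB$ having even degree, and then, via the angle count $n_1+2n_2+3n_3=8$, to the parity of the number of triangle right angles meeting at each vertex. That chain of reductions is correct, and the paper itself notes (in the remark following its proof) that the even-degree property is an equivalent formulation. The paper instead exhibits an explicit orientation-based colouring and verifies properness by enumerating the possible triangle--triangle, rhombus--rhombus and triangle--rhombus adjacencies, using the inflation rule to rule out the configurations that would clash; that has the advantage of directly producing the colouring algorithm promised in the statement.

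However, as written your argument has a genuine gap: the one nontrivial fact it needs --- that an even number of $90^\circ$ corners (equivalently, an even number of tiles) meets at every vertex of $AB$ --- is never established. You reduce it to an inspection of the vertex atlas and then stop, explicitly flagging that enumeration as the main obstacle. The hypotenuse pairing of triangles into unit squares does not by itself give the parity: at a given vertex each incident square contributes either one $90^\circ$ corner or two $45^\circ$ corners, and nothing local to a single square controls how many squares of the first kind occur around the vertex. So the finite check (or a substitution-based induction replacing it) is the entire content of the theorem and must actually be carried out. A similar issue affects your derivation of the orientation form: the claim that the (unique up to swap) $2$-colouring depends on a tile only through its orientation is asserted via ``this propagates under the substitution'' but not verified; making it precise would essentially amount to redoing the paper's case analysis of which oriented adjacencies actually occur in the tiling.
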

\begin{proof}
The rhombuses have four orientations, and taking into account the directions of the arrows, the triangles have 16 orientations.
\[
\centering
\begin{tikzpicture}[scale=0.8]
\draw[gray, thick] (0,0)--(1,0)--({1+sqrt(2)/2},{sqrt(2)/2})--({sqrt(2)/2},{sqrt(2)/2})--(0,0)--({1+sqrt(2)},0)
--({1+sqrt(2)/2},{sqrt(2)/2})--({1+sqrt(2)/2},{1+sqrt(2)/2})--(0,0);
\draw[cm={cos(45) ,-sin(45) ,sin(45) ,cos(45) ,(0 cm,0 cm)}] (0,0)--(1,0)--({1+sqrt(2)/2},{sqrt(2)/2})--({sqrt(2)/2},{sqrt(2)/2})--(0,0)--({1+sqrt(2)},0)
--({1+sqrt(2)/2},{sqrt(2)/2})--({1+sqrt(2)/2},{1+sqrt(2)/2})--(0,0);
\draw[cm={cos(90) ,-sin(90) ,sin(90) ,cos(90) ,(0 cm,0 cm)}] (0,0)--(1,0)--({1+sqrt(2)/2},{sqrt(2)/2})--({sqrt(2)/2},{sqrt(2)/2})--(0,0)--({1+sqrt(2)},0)
--({1+sqrt(2)/2},{sqrt(2)/2})--({1+sqrt(2)/2},{1+sqrt(2)/2})--(0,0);
\draw[cm={cos(135) ,-sin(135) ,sin(135) ,cos(135) ,(0 cm,0 cm)}] (0,0)--(1,0)--({1+sqrt(2)/2},{sqrt(2)/2})--({sqrt(2)/2},{sqrt(2)/2})--(0,0)--({1+sqrt(2)},0)
--({1+sqrt(2)/2},{sqrt(2)/2})--({1+sqrt(2)/2},{1+sqrt(2)/2})--(0,0);
\draw[cm={cos(180) ,-sin(180) ,sin(180) ,cos(180) ,(0 cm,0 cm)}] (0,0)--(1,0)--({1+sqrt(2)/2},{sqrt(2)/2})--({sqrt(2)/2},{sqrt(2)/2})--(0,0)--({1+sqrt(2)},0)
--({1+sqrt(2)/2},{sqrt(2)/2})--({1+sqrt(2)/2},{1+sqrt(2)/2})--(0,0);
\draw[cm={cos(225) ,-sin(225) ,sin(225) ,cos(225) ,(0 cm,0 cm)}] (0,0)--(1,0)--({1+sqrt(2)/2},{sqrt(2)/2})--({sqrt(2)/2},{sqrt(2)/2})--(0,0)--({1+sqrt(2)},0)
--({1+sqrt(2)/2},{sqrt(2)/2})--({1+sqrt(2)/2},{1+sqrt(2)/2})--(0,0);
\draw[cm={cos(270) ,-sin(270) ,sin(270) ,cos(270) ,(0 cm,0 cm)}] (0,0)--(1,0)--({1+sqrt(2)/2},{sqrt(2)/2})--({sqrt(2)/2},{sqrt(2)/2})--(0,0)--({1+sqrt(2)},0)
--({1+sqrt(2)/2},{sqrt(2)/2})--({1+sqrt(2)/2},{1+sqrt(2)/2})--(0,0);
\draw[cm={cos(315) ,-sin(315) ,sin(315) ,cos(315) ,(0 cm,0 cm)}] (0,0)--(1,0)--({1+sqrt(2)/2},{sqrt(2)/2})--({sqrt(2)/2},{sqrt(2)/2})--(0,0)--({1+sqrt(2)},0)
--({1+sqrt(2)/2},{sqrt(2)/2})--({1+sqrt(2)/2},{1+sqrt(2)/2})--(0,0);
\draw[black] (0.8535,0.3535) node[anchor=center] {3};
\draw[black, cm={cos(45) ,-sin(45) ,sin(45) ,cos(45) ,(0 cm,0 cm)}] (0.8535,0.3535) node {4};
\draw[black, cm={cos(90) ,-sin(90) ,sin(90) ,cos(90) ,(0 cm,0 cm)}] (0.8535,0.3535) node {1};
\draw[black, cm={cos(135) ,-sin(135) ,sin(135) ,cos(135) ,(0 cm,0 cm)}] (0.8535,0.3535) node {2};
\draw[black, cm={cos(180) ,-sin(180) ,sin(180) ,cos(180) ,(0 cm,0 cm)}] (0.8535,0.3535) node {3};
\draw[black, cm={cos(225) ,-sin(225) ,sin(225) ,cos(225) ,(0 cm,0 cm)}] (0.8535,0.3535) node {4};
\draw[black, cm={cos(270) ,-sin(270) ,sin(270) ,cos(270) ,(0 cm,0 cm)}] (0.8535,0.3535) node {1};
\draw[black, cm={cos(315) ,-sin(315) ,sin(315) ,cos(315) ,(0 cm,0 cm)}] (0.8535,0.3535) node {2};
\draw[black] (1.7071,0.375) node[anchor=center] {5};
\draw[black, cm={cos(45) ,-sin(45) ,sin(45) ,cos(45) ,(0 cm,0 cm)}] (1.7071,0.375) node[anchor=center] {7};
\draw[black, cm={cos(90) ,-sin(90) ,sin(90) ,cos(90) ,(0 cm,0 cm)}] (1.7071,0.375) node[anchor=center] {9};
\draw[black, cm={cos(135) ,-sin(135) ,sin(135) ,cos(135) ,(0 cm,0 cm)}] (1.7071,0.375) node[anchor=center] {11};
\draw[black, cm={cos(180) ,-sin(180) ,sin(180) ,cos(180) ,(0 cm,0 cm)}] (1.7071,0.375) node[anchor=center] {13};
\draw[black, cm={cos(225) ,-sin(225) ,sin(225) ,cos(225) ,(0 cm,0 cm)}] (1.7071,0.375) node[anchor=center] {15};
\draw[black, cm={cos(270) ,-sin(270) ,sin(270) ,cos(270) ,(0 cm,0 cm)}] (1.7071,0.375) node[anchor=center] {1};
\draw[black, cm={cos(315) ,-sin(315) ,sin(315) ,cos(315) ,(0 cm,0 cm)}] (1.7071,0.375) node[anchor=center] {3};
\draw[black] (1.7071,-0.375) node[anchor=center] {6};
\draw[black, cm={cos(45) ,-sin(45) ,sin(45) ,cos(45) ,(0 cm,0 cm)}] (1.7071,-0.375) node[anchor=center] {8};
\draw[black, cm={cos(90) ,-sin(90) ,sin(90) ,cos(90) ,(0 cm,0 cm)}] (1.7071,-0.375) node[anchor=center] {10};
\draw[black, cm={cos(135) ,-sin(135) ,sin(135) ,cos(135) ,(0 cm,0 cm)}] (1.7071,-0.375) node[anchor=center] {12};
\draw[black, cm={cos(180) ,-sin(180) ,sin(180) ,cos(180) ,(0 cm,0 cm)}] (1.7071,-0.375) node[anchor=center] {14};
\draw[black, cm={cos(225) ,-sin(225) ,sin(225) ,cos(225) ,(0 cm,0 cm)}] (1.7071,-0.375) node[anchor=center] {16};
\draw[black, cm={cos(270) ,-sin(270) ,sin(270) ,cos(270) ,(0 cm,0 cm)}] (1.7071,-0.375) node[anchor=center] {2};
\draw[black, cm={cos(315) ,-sin(315) ,sin(315) ,cos(315) ,(0 cm,0 cm)}] (1.7071,-0.375) node[anchor=center] {4};
\draw[->] (0,0) -- (1.7071,0);
\draw[->, cm={cos(45) ,-sin(45) ,sin(45) ,cos(45) ,(0 cm,0 cm)}] (0,0) -- (1.7071,0);
\draw[->, cm={cos(90) ,-sin(90) ,sin(90) ,cos(90) ,(0 cm,0 cm)}] (0,0) -- (1.7071,0);
\draw[->, cm={cos(135) ,-sin(135) ,sin(135) ,cos(135) ,(0 cm,0 cm)}] (0,0) -- (1.7071,0);
\draw[->, cm={cos(180) ,-sin(180) ,sin(180) ,cos(180) ,(0 cm,0 cm)}] (0,0) -- (1.7071,0);
\draw[->, cm={cos(225) ,-sin(225) ,sin(225) ,cos(225) ,(0 cm,0 cm)}] (0,0) -- (1.7071,0);
\draw[->, cm={cos(270) ,-sin(270) ,sin(270) ,cos(270) ,(0 cm,0 cm)}] (0,0) -- (1.7071,0);
\draw[->, cm={cos(315) ,-sin(315) ,sin(315) ,cos(315) ,(0 cm,0 cm)}] (0,0) -- (1.7071,0);
\end{tikzpicture}
\]

For the rhombuses, colour all those whose orientation is given by 1 and 3 blue. Colour 2 and 4 red. For triangles, colour the triangles whose orientation is given by 1,4,5,8,9,12 and 16 red. Colour 2,3,6,7,10,11,14 and 15 blue. The end result result is as follows: 

\[
\centering
\begin{tikzpicture}[scale=0.8]
\draw[gray, thick, fill= blue, opacity=0.6] (0,0)--(1,0)--({1+sqrt(2)/2},{sqrt(2)/2})--({sqrt(2)/2},{sqrt(2)/2})--(0,0);
\draw[gray, thick, fill=red, opacity=0.6] (1,0)--({1+sqrt(2)},0)--({1+sqrt(2)/2},{sqrt(2)/2})--({1+sqrt(2)/2},{1+sqrt(2)/2})--({sqrt(2)/2},{sqrt(2)/2})
--({1+sqrt(2)/2},{sqrt(2)/2})--(1,0);
\draw[gray, thick, fill= red, opacity=0.6, cm={cos(45) ,-sin(45) ,sin(45) ,cos(45) ,(0 cm,0 cm)}] (0,0)--(1,0)--({1+sqrt(2)/2},{sqrt(2)/2})--({sqrt(2)/2},{sqrt(2)/2})--(0,0);
\draw[gray, thick, fill=blue, opacity=0.6, cm={cos(45) ,-sin(45) ,sin(45) ,cos(45) ,(0 cm,0 cm)}] (1,0)--({1+sqrt(2)},0)--({1+sqrt(2)/2},{sqrt(2)/2})--({1+sqrt(2)/2},{1+sqrt(2)/2})--({sqrt(2)/2},{sqrt(2)/2})
--({1+sqrt(2)/2},{sqrt(2)/2})--(1,0);
\draw[gray, thick, fill= blue, opacity=0.6, cm={cos(90) ,-sin(90) ,sin(90) ,cos(90) ,(0 cm,0 cm)}] (0,0)--(1,0)--({1+sqrt(2)/2},{sqrt(2)/2})--({sqrt(2)/2},{sqrt(2)/2})--(0,0);
\draw[gray, thick, fill=red, opacity=0.6, cm={cos(90) ,-sin(90) ,sin(90) ,cos(90) ,(0 cm,0 cm)}] (1,0)--({1+sqrt(2)},0)--({1+sqrt(2)/2},{sqrt(2)/2})--({1+sqrt(2)/2},{1+sqrt(2)/2})--({sqrt(2)/2},{sqrt(2)/2})
--({1+sqrt(2)/2},{sqrt(2)/2})--(1,0);
\draw[gray, thick, fill= red, opacity=0.6, cm={cos(135) ,-sin(135) ,sin(135) ,cos(135) ,(0 cm,0 cm)}] (0,0)--(1,0)--({1+sqrt(2)/2},{sqrt(2)/2})--({sqrt(2)/2},{sqrt(2)/2})--(0,0);
\draw[gray, thick, fill=blue, opacity=0.6, cm={cos(135) ,-sin(135) ,sin(135) ,cos(135) ,(0 cm,0 cm)}] (1,0)--({1+sqrt(2)},0)--({1+sqrt(2)/2},{sqrt(2)/2})--({1+sqrt(2)/2},{1+sqrt(2)/2})--({sqrt(2)/2},{sqrt(2)/2})
--({1+sqrt(2)/2},{sqrt(2)/2})--(1,0);
\draw[gray, thick, fill= blue, opacity=0.6, cm={cos(180) ,-sin(180) ,sin(180) ,cos(180) ,(0 cm,0 cm)}] (0,0)--(1,0)--({1+sqrt(2)/2},{sqrt(2)/2})--({sqrt(2)/2},{sqrt(2)/2})--(0,0);
\draw[gray, thick, fill=red, opacity=0.6, cm={cos(180) ,-sin(180) ,sin(180) ,cos(180) ,(0 cm,0 cm)}] (1,0)--({1+sqrt(2)},0)--({1+sqrt(2)/2},{sqrt(2)/2})--({1+sqrt(2)/2},{1+sqrt(2)/2})--({sqrt(2)/2},{sqrt(2)/2})
--({1+sqrt(2)/2},{sqrt(2)/2})--(1,0);
\draw[gray, thick, fill= red, opacity=0.6, cm={cos(225) ,-sin(225) ,sin(225) ,cos(225) ,(0 cm,0 cm)}] (0,0)--(1,0)--({1+sqrt(2)/2},{sqrt(2)/2})--({sqrt(2)/2},{sqrt(2)/2})--(0,0);
\draw[gray, thick, fill=blue, opacity=0.6, cm={cos(225) ,-sin(225) ,sin(225) ,cos(225) ,(0 cm,0 cm)}] (1,0)--({1+sqrt(2)},0)--({1+sqrt(2)/2},{sqrt(2)/2})--({1+sqrt(2)/2},{1+sqrt(2)/2})--({sqrt(2)/2},{sqrt(2)/2})
--({1+sqrt(2)/2},{sqrt(2)/2})--(1,0);
\draw[gray, thick, fill= blue, opacity=0.6, cm={cos(270) ,-sin(270) ,sin(270) ,cos(270) ,(0 cm,0 cm)}] (0,0)--(1,0)--({1+sqrt(2)/2},{sqrt(2)/2})--({sqrt(2)/2},{sqrt(2)/2})--(0,0);
\draw[gray, thick, fill=red, opacity=0.6, cm={cos(270) ,-sin(270) ,sin(270) ,cos(270) ,(0 cm,0 cm)}] (1,0)--({1+sqrt(2)},0)--({1+sqrt(2)/2},{sqrt(2)/2})--({1+sqrt(2)/2},{1+sqrt(2)/2})--({sqrt(2)/2},{sqrt(2)/2})
--({1+sqrt(2)/2},{sqrt(2)/2})--(1,0);
\draw[gray, thick, fill= red, opacity=0.6, cm={cos(315) ,-sin(315) ,sin(315) ,cos(315) ,(0 cm,0 cm)}] (0,0)--(1,0)--({1+sqrt(2)/2},{sqrt(2)/2})--({sqrt(2)/2},{sqrt(2)/2})--(0,0);
\draw[gray, thick, fill=blue, opacity=0.6, cm={cos(315) ,-sin(315) ,sin(315) ,cos(315) ,(0 cm,0 cm)}] (1,0)--({1+sqrt(2)},0)--({1+sqrt(2)/2},{sqrt(2)/2})--({1+sqrt(2)/2},{1+sqrt(2)/2})--({sqrt(2)/2},{sqrt(2)/2})
--({1+sqrt(2)/2},{sqrt(2)/2})--(1,0);
\draw[->,gray, thick] (0,0) -- (1.7071,0);
\draw[->, cm={cos(45) ,-sin(45) ,sin(45) ,cos(45) ,(0 cm,0 cm)},gray, thick] (0,0) -- (1.7071,0);
\draw[->, cm={cos(90) ,-sin(90) ,sin(90) ,cos(90) ,(0 cm,0 cm)},gray, thick] (0,0) -- (1.7071,0);
\draw[->, cm={cos(135) ,-sin(135) ,sin(135) ,cos(135) ,(0 cm,0 cm)},gray, thick] (0,0) -- (1.7071,0);
\draw[->, cm={cos(180) ,-sin(180) ,sin(180) ,cos(180) ,(0 cm,0 cm)},gray, thick] (0,0) -- (1.7071,0);
\draw[->, cm={cos(225) ,-sin(225) ,sin(225) ,cos(225) ,(0 cm,0 cm)},gray, thick] (0,0) -- (1.7071,0);
\draw[->, cm={cos(270) ,-sin(270) ,sin(270) ,cos(270) ,(0 cm,0 cm)},gray, thick] (0,0) -- (1.7071,0);
\draw[->, cm={cos(315) ,-sin(315) ,sin(315) ,cos(315) ,(0 cm,0 cm)},gray, thick] (0,0) -- (1.7071,0);
\end{tikzpicture}
\]

We shall show that this colouring scheme works. Focusing on just the interactions between triangles, all triangles whose common edge is the hypotenuse will be properly coloured due to the requirement that their arrows go in the same direction. Similarly, all triangles whose relative orientations are as shown below will also be properly coloured:

\[
\centering
\begin{tikzpicture}[scale=1]
\draw[gray, thick] (0,0)--(1,0)--(1,-1)--(0,0)--(2,0)--(1,-1);
\draw[gray, thick] (3,0)--(4,0)--(4,-1)--(3,0)--(5,0)--(4,-1);
\draw[-{Straight Barb[right]}, gray, thick] (1,-1)--(0.5,-0.5);
\draw[-{Straight Barb[left]}, gray, thick] (1,-1)--(1.5,-0.5);
\draw[-{Straight Barb[left]}, gray, thick] (3,0)--(3.5,-0.5);
\draw[-{Straight Barb[right]}, gray, thick] (5,0)--(4.5,-0.5);
\end{tikzpicture}
\]
A potential issue with our colouring could occur if adjacent triangles had arrows pointing in opposite directions, but it is easy to show by our inflation rule that this does not happen:

\[
\centering
\begin{tikzpicture}[scale=1]
\draw[gray,thick] (0,0)--(1,0)--(1,1)--(0,0)--(0,-1)--(1,0);
\draw[-{Straight Barb[right]}, gray, thick] (0,0)--(0.5,0.5);
\draw[-{Straight Barb[right]}, gray, thick] (1,0)--(0.5,-0.5);
\draw[->,line width=0.5mm] (1.5,0)--(2.5,0);
\draw[gray,thick] (3,0)--({4+sqrt(2)},0)--({4+sqrt(2)},{1+sqrt(2)})--(3,0);
\draw[gray,thick] ({3+sqrt(2)},0)--({3+sqrt(2)/2},{sqrt(2)/2})--({4+sqrt(2)/2},{sqrt(2)/2})--({4+sqrt(2)/2},{1+sqrt(2)/2});
\draw[gray,thick] ({4+sqrt(2)},0)--({4+sqrt(2)/2},{sqrt(2)/2})--({4+sqrt(2)},{sqrt(2)});
\draw[-{Straight Barb[right]}, gray, thick] ({3+sqrt(2)},0)--({3+sqrt(2)/2},0);
\draw[-{Straight Barb[left]}, gray, thick] ({4+sqrt(2)/2},{1+sqrt(2)/2})--({3.5+sqrt(2)/2},{0.5+sqrt(2)/2});
\draw[-{Straight Barb[right]}, gray, thick] ({4+sqrt(2)},{sqrt(2)})--({4+sqrt(2)},{sqrt(2)/2});
\draw[gray,thick, cm={cos(180) ,-sin(180) ,sin(180) ,cos(180) ,(8.4142 cm,0 cm)}] (3,0)--({4+sqrt(2)},0)--({4+sqrt(2)},{1+sqrt(2)})--(3,0);
\draw[gray,thick, cm={cos(180) ,-sin(180) ,sin(180) ,cos(180) ,(8.4142 cm,0 cm)}] ({3+sqrt(2)},0)--({3+sqrt(2)/2},{sqrt(2)/2})--({4+sqrt(2)/2},{sqrt(2)/2})--({4+sqrt(2)/2},{1+sqrt(2)/2});
\draw[gray,thick, cm={cos(180) ,-sin(180) ,sin(180) ,cos(180) ,(8.4142 cm,0 cm)}] ({4+sqrt(2)},0)--({4+sqrt(2)/2},{sqrt(2)/2})--({4+sqrt(2)},{sqrt(2)});
\draw[-{Straight Barb[right]}, gray, thick, cm={cos(180) ,-sin(180) ,sin(180) ,cos(180) ,(8.4142 cm,0 cm)}] ({3+sqrt(2)},0)--({3+sqrt(2)/2},0);
\draw[-{Straight Barb[left]}, gray, thick, cm={cos(180) ,-sin(180) ,sin(180) ,cos(180) ,(8.4142 cm,0 cm)}] ({4+sqrt(2)/2},{1+sqrt(2)/2})--({3.5+sqrt(2)/2},{0.5+sqrt(2)/2});
\draw[-{Straight Barb[right]}, gray, thick, cm={cos(180) ,-sin(180) ,sin(180) ,cos(180) ,(8.4142 cm,0 cm)}] ({4+sqrt(2)},{sqrt(2)})--({4+sqrt(2)},{sqrt(2)/2});
\end{tikzpicture}
\]

The hypotenuse of a triangle is never adjacent to an edge of the rhombus, thus the above triangle configuration does not occur.

Focusing on the interactions between rhombuses; by our colouring scheme, rhombuses which are the same colour differ by an angle of $90^\circ$ or are the same orientation. Since the rhombuses whose orientation differs by $90^\circ$ cannot be adjacent, it follows that the only possible conflict is between two rhombuses of the same orientation. It is easy to show by the inflation rule that this cannot happen:

\[
\centering
\begin{tikzpicture}[scale=.4]
\draw[gray, thick] (0,-6+2) -- (2,-6+2);
\draw[gray, thick] (1.414,-4.586+2) -- (0,-6+2);
\draw[gray, thick] (1.414,-4.586+2) -- (3.414,-4.586+2);
\draw[gray, thick] (2,-6+2) -- (3.414,-4.586+2);
\draw[line width=0.5mm, ->] (5,-2.586) -- (7.5,-2.586);
\draw[gray, thick] (6,-6) -- (9.414,-2.586);
\draw[-{Straight Barb[right]}, gray, thick] (7.414,-4.586)--(8.414,-3.586);
\draw[gray, thick] (10.828,-6) -- (6,-6);
\draw[gray, thick] (14.242,-2.586) -- (9.414,-2.586);
\draw[gray, thick] (14.242,-2.586) -- (10.828,-6);
\draw[gray, thick] (7.414,-4.586) -- (6,-6);
\draw[gray, thick] (7.414,-4.586) -- (9.414,-4.586);
\draw[gray, thick] (8,-6) -- (9.414,-4.586);
\draw[-{Straight Barb[left]}, gray, thick] (8,-6)--(9.414,-6);
\draw[gray, thick] (9.414,-2.586) -- (9.414,-4.586);
\draw[gray, thick] (10.828,-6) -- (9.414,-4.586);
\draw[gray, thick] (10.828,-6) -- (10.828,-4);
\draw[gray, thick] (12.828,-4) -- (10.828,-4);
\draw[-{Straight Barb[right]}, gray, thick] (12.828,-4)--(11.828,-5);
\draw[gray, thick] (9.414,-2.586) -- (10.828,-4);
\draw[gray, thick] (12.242,-2.586) -- (10.828,-4);
\draw[-{Straight Barb[left]}, gray, thick] (12.242,-2.586)--(10.828,-2.586);
\draw[gray, thick] (0+1.414,-6+1.414+2) -- (2+1.414,-6+1.414+2);
\draw[gray, thick] (1.414+1.414,-4.586+1.414+2) -- (0+1.414,-6+1.414+2);
\draw[gray, thick] (1.414+1.414,-4.586+1.414+2) -- (3.414+1.414,-4.586+1.414+2);
\draw[gray, thick] (2+1.414,-6+1.414+2) -- (3.414+1.414,-4.586+1.414+2);
\draw[gray, thick] (6+3.414,-6+3.414) -- (9.414+3.414,-2.586+3.414);
\draw[-{Straight Barb[right]}, gray, thick] (7.414+3.414,-4.586+3.414)--(8.414+3.414,-3.586+3.414);
\draw[gray, thick] (10.828+3.414,-6+3.414) -- (6+3.414,-6+3.414);
\draw[gray, thick] (14.242+3.414,-2.586+3.414) -- (9.414+3.414,-2.586+3.414);
\draw[gray, thick] (14.242+3.414,-2.586+3.414) -- (10.828+3.414,-6+3.414);
\draw[gray, thick] (7.414+3.414,-4.586+3.414) -- (6+3.414,-6+3.414);
\draw[gray, thick] (7.414+3.414,-4.586+3.414) -- (9.414+3.414,-4.586+3.414);
\draw[gray, thick] (8+3.414,-6+3.414) -- (9.414+3.414,-4.586+3.414);
\draw[-{Straight Barb[left]}, gray, thick] (8+3.414,-6+3.414)--(9.414+3.414,-6+3.414);
\draw[gray, thick] (9.414+3.414,-2.586+3.414) -- (9.414+3.414,-4.586+3.414);
\draw[gray, thick] (10.828+3.414,-6+3.414) -- (9.414+3.414,-4.586+3.414);
\draw[gray, thick] (10.828+3.414,-6+3.414) -- (10.828+3.414,-4+3.414);
\draw[gray, thick] (12.828+3.414,-4+3.414) -- (10.828+3.414,-4+3.414);
\draw[-{Straight Barb[right]}, gray, thick] (12.828+3.414,-4+3.414)--(11.828+3.414,-5+3.414);
\draw[gray, thick] (9.414+3.414,-2.586+3.414) -- (10.828+3.414,-4+3.414);
\draw[gray, thick] (12.242+3.414,-2.586+3.414) -- (10.828+3.414,-4+3.414);
\draw[-{Straight Barb[left]}, gray, thick] (12.242+3.414,-2.586+3.414)--(10.828+3.414,-2.586+3.414);
\end{tikzpicture}
\]

Finally, focusing on the interactions between rhombuses and triangles:
\begin{center}
\begin{tikzpicture}[scale=.8]

\draw[gray, thick] (0,0)--(-1,0)--({-1-sqrt(2)/2},{sqrt(2)/2})--({-sqrt(2)/2},{sqrt(2)/2})--(0,0);
\draw[gray, thick] (-1,0)--(-1,-1)--(0,0);
\draw[gray,thick, -{Straight Barb[left]}] (-1,-1)--(-0.5,-0.5);

\draw[gray, thick] (2,0)--(1,0)--({1-sqrt(2)/2},{sqrt(2)/2})--({2-sqrt(2)/2},{sqrt(2)/2})--(2,0);
\draw[gray, thick] ({2-sqrt(2)/2},{sqrt(2)/2})--({2+sqrt(2)/2},{sqrt(2)/2})--(2,0);
\draw[gray, thick, -{Straight Barb[right]}] ({2-sqrt(2)/2},{sqrt(2)/2})--(2,{sqrt(2)/2});

\draw[gray, thick] (0,-2)--(-1,-2)--({-1-sqrt(2)/2},{sqrt(2)/2-2})--({-sqrt(2)/2},{sqrt(2)/2-2})--(0,-2);
\draw[gray, thick] (-1,-2)--(-1,-3)--(0,-2);
\draw[gray,thick, -{Straight Barb[right]}] (0,-2)--(-0.5,-2.5);

\draw[gray, thick] (2,-2)--(1,-2)--({1-sqrt(2)/2},{sqrt(2)/2-2})--({2-sqrt(2)/2},{sqrt(2)/2-2})--(2,-2);
\draw[gray, thick] ({2-sqrt(2)/2},{sqrt(2)/2-2})--({2+sqrt(2)/2},{sqrt(2)/2-2})--(2,-2);
\draw[gray, thick, -{Straight Barb[left]}] ({2+sqrt(2)/2},{sqrt(2)/2-2})--(2,{sqrt(2)/2-2});

\end{tikzpicture}
\end{center}
All possible combinations of adjacent triangles and rhombuses are just a rotated/reflected version of one of the above. It is easy to see that the first two configurations and their rotated/reflected versions work based on our colouring rule. The last two do
not, but we can show  by the inflation rule that the last two relative positions of triangles and rhombuses do not occur:

\[
\centering
\begin{tikzpicture}[scale=.8]

\draw[gray, thick] (0,0)--(-1,0)--({-1-sqrt(2)/2},{sqrt(2)/2})--({-sqrt(2)/2},{sqrt(2)/2})--(0,0);
\draw[gray, thick] (-1,0)--(-1,-1)--(0,0);
\draw[gray,thick, -{Straight Barb[right]}] (0,0)--(-0.5,-0.5);
\draw[line width=0.5mm, ->] (0.5,0) -- (1.5,0);
\draw[gray,thick] ({3+sqrt(2)/2},{1+sqrt(2)/2})--({2-sqrt(2)/2},{1+sqrt(2)/2})--(3,0)--({4+sqrt(2)},0);
\draw[thick, gray] ({3-sqrt(2)/2},{1+sqrt(2)/2})--(3,1)--(2,1);
\draw[thick, gray] (3,0)--(3,1)--({3+sqrt(2)/2},{1+sqrt(2)/2})--({4+sqrt(2)/2},{sqrt(2)/2})--({3+sqrt(2)/2},{sqrt(2)/2})
--({3+sqrt(2)/2},{1+sqrt(2)/2})--({4+sqrt(2)},0)--({3+sqrt(2)},0)--({3+sqrt(2)/2},{sqrt(2)/2})--(3,0);
\draw[gray, thick, -{Straight Barb[right]}] (4,0)--({3+sqrt(2)/2},0);
\draw[gray, thick, -{Straight Barb[left]}] (2,1)--(2.5,0.5);
\draw[gray, thick, -{Straight Barb[right]}] ({2-sqrt(2)/2},{1+sqrt(2)/2})--({3},{1+sqrt(2)/2});
\draw[gray, thick, -{Straight Barb[left]}] ({4+sqrt(2)/2},{sqrt(2)/2})--({3.5+sqrt(2)/2},{0.5+sqrt(2)/2});

\draw[gray, thick] (3,0)--(3,{-1-sqrt(2)})--({4+sqrt(2)},0);
\draw[gray, thick] (3,{-sqrt(2)})--({3+sqrt(2)/2},{-sqrt(2)/2})--(3,0);
\draw[gray, thick] (4,0)--({4+sqrt(2)/2},{-sqrt(2)/2})--({3+sqrt(2)/2},{-sqrt(2)/2})--({3+sqrt(2)/2},{-sqrt(2)/2-1});
\draw[gray, thick, -{Straight Barb[right]}] (3,{-sqrt(2)})--(3,{-sqrt(2)/2});
\draw[gray, thick, -{Straight Barb[left]}] (3,{-1-sqrt(2)})--({3.5+sqrt(2)/2},{-sqrt(2)/2-0.5});
\draw[gray, thick, -{Straight Barb[right]}] (4,0)--({4+sqrt(2)/2},0);
\end{tikzpicture}
\]

\[
\centering
\begin{tikzpicture}

\draw[gray, thick] (0,0)--(-1,0)--({-1-sqrt(2)/2},{sqrt(2)/2})--({-sqrt(2)/2},{sqrt(2)/2})--(0,0);
\draw[gray, thick] (0,0)--({sqrt(2)/2},{sqrt(2)/2})--({-sqrt(2)/2},{sqrt(2)/2});
\draw[gray, thick, -{Straight Barb[left]}, gray, thick] ({sqrt(2)/2},{sqrt(2)/2})--(0,{sqrt(2)/2});
\draw[line width=0.5mm, ->] (1,{sqrt(2)/2}) -- (2,{sqrt(2)/2});
\draw[gray,thick] ({2-sqrt(2)/2},{1+sqrt(2)/2})--(3,0)--({4+sqrt(2)},0)--({5+sqrt(2)*3/2},{1+sqrt(2)/2})--({2-sqrt(2)/2},{1+sqrt(2)/2});
\draw[thick, gray] ({3-sqrt(2)/2},{1+sqrt(2)/2})--(3,1)--(2,1);
\draw[thick, gray] (3,0)--(3,1)--({3+sqrt(2)/2},{1+sqrt(2)/2})--({4+sqrt(2)/2},{sqrt(2)/2})--({3+sqrt(2)/2},{sqrt(2)/2})
--({3+sqrt(2)/2},{1+sqrt(2)/2})--({4+sqrt(2)},0)--({3+sqrt(2)},0)--({3+sqrt(2)/2},{sqrt(2)/2})--(3,0);

\draw[thick, gray] ({3+sqrt(2)},1)--({4+sqrt(2)},1)--({4+sqrt(2)/2},{1+sqrt(2)/2})--({4+sqrt(2)*3/2},{1+sqrt(2)/2})
--({4+sqrt(2)},1)--({4+sqrt(2)},0)--({4+sqrt(2)*3/2},{sqrt(2)/2})--({4+sqrt(2)*3/2},{1+sqrt(2)/2});

\draw[gray, thick, -{Straight Barb[right]}] (4,0)--({3+sqrt(2)/2},0);
\draw[gray, thick, -{Straight Barb[left]}] (2,1)--(2.5,0.5);
\draw[gray, thick, -{Straight Barb[right]}] ({2-sqrt(2)/2},{1+sqrt(2)/2})--({3},{1+sqrt(2)/2});
\draw[gray, thick, -{Straight Barb[left]}] ({4+sqrt(2)/2},{sqrt(2)/2})--({3.5+sqrt(2)/2},{0.5+sqrt(2)/2});

\draw[gray, thick, -{Straight Barb[left]}] ({4+sqrt(2)/2},{sqrt(2)/2})--({4+sqrt(2)*3/4},{sqrt(2)/4});
\draw[gray, thick, -{Straight Barb[right]}] ({3+sqrt(2)/2},{1+sqrt(2)/2})--({4+sqrt(2)},{1+sqrt(2)/2});
\draw[gray, thick, -{Straight Barb[left]}] ({4+sqrt(2)*3/2},{sqrt(2)/2})--({4.5+sqrt(2)*3/2},{0.5+sqrt(2)/2});
\end{tikzpicture}
\]

Since our cases are exhaustive, it follows that the faces of the AB tiling are two colourable. 
\begin{center}
    \includegraphics[width=0.8\textwidth]{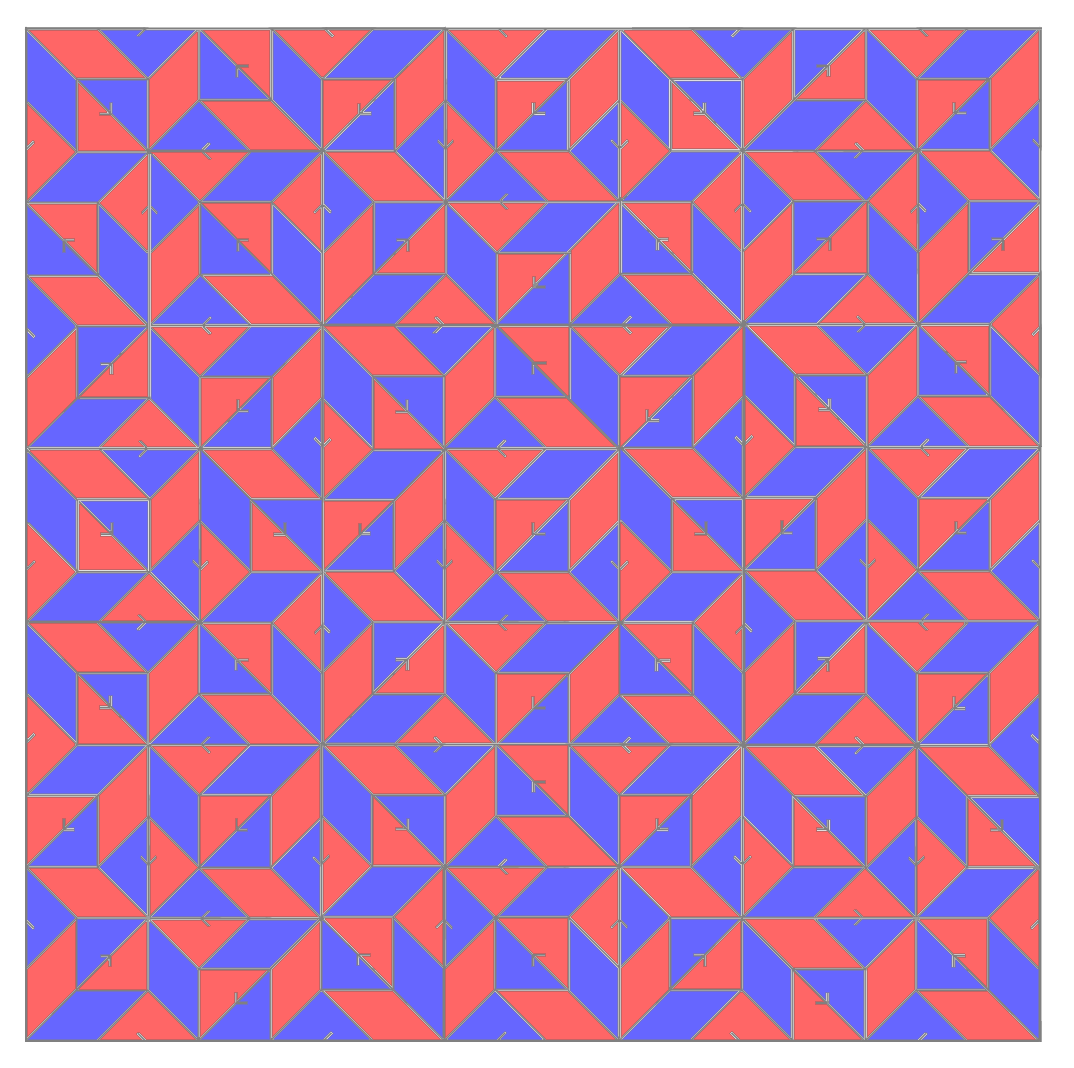}
\end{center}
\end{proof}

A face two colouring can be shown by the degrees being even but the above argument produces a colouring algorithm. In fact this is another way to show that every vertex index is even (meaning all cycles are even in the dual graph).

\section{Rational Pinwheel Tiling}
The rational pinwheel substitution is the single tile substitution:

\[
\centering
\begin{tikzpicture}[scale=.3]
\draw[gray, thick] (0,0) -- (2,0);
\draw[gray, thick]  (0,0) -- (0,4);
\draw[gray, thick]  (0,4) -- (2,0);
\draw[->,line width=0.5mm]  (4,2)-- (6,2);
\end{tikzpicture}
\begin{tikzpicture}[scale=.4, rotate=26.6]
\draw[gray, thick] (12,0) -- (12,10);
\draw[gray, thick] (12,10) -- (8,2);
\draw[gray, thick] (8,2) -- (12,0);
\draw[gray, thick]  (12,2) -- (8,2);
\draw[gray, thick]  (12,6) -- (10,6);
\draw[gray, thick]  (12,6) -- (10,2);
\draw[gray, thick]  (10,6) -- (10,2);
\end{tikzpicture}
\]
Let RP denote the rational pinwheel tiling of the plane. The structure of RP is much clearer by observing that it is also created by the square of the substitution:

\[
\centering
\begin{tikzpicture}[scale=.3]
\draw[gray, thick] (0,0) -- (2,0);
\draw[gray, thick]  (0,0) -- (0,4);
\draw[gray, thick]  (0,4) -- (2,0);
\draw[->,line width=0.5mm]  (4,2)-- (6,2);
\draw[line width=0.5mm] (10,-6) -- (20,-6);
\draw[line width=0.5mm] (10,-6) -- (10,14);
\draw[line width=0.5mm] (10,14) -- (20,-6);
\draw[line width=0.5mm] (10,-6) -- (18,-2);
\draw[line width=0.5mm] (10,4) -- (14,6);
\draw[line width=0.5mm] (10,4) -- (14,-4);
\draw[line width=0.5mm] (14,-4) -- (14,6);
\draw[gray, thick]  (18,-2) -- (18,-6);
\draw[gray, thick]  (14,-4) -- (14,-6);
\draw[gray, thick]  (14,-4) -- (18,-4);
\draw[gray, thick]  (14,-6) -- (18,-4);
\draw[gray, thick]  (10,-4) -- (14,-4);
\draw[gray, thick]  (10,0) -- (14,0);
\draw[gray, thick]  (10,4) -- (14,4);
\draw[gray, thick]  (10,0) -- (12,-4);
\draw[gray, thick]  (12,4) -- (12,-4);
\draw[gray, thick]  (12,4) -- (14,0);
\draw[gray, thick]  (14,2) -- (16,2);
\draw[gray, thick]  (14,-2) -- (18,-2);
\draw[gray, thick]  (14,2) -- (16,-2);
\draw[gray, thick]  (16,2) -- (16,-2);
\draw[gray, thick]  (10,6) -- (14,6);
\draw[gray, thick]  (10,10) -- (12,10);
\draw[gray, thick]  (10,10) -- (12,6);
\draw[gray, thick]  (12,10) -- (12,6);
\end{tikzpicture}
\]

We start by proving the following preliminary Lemma.

\begin{lemma}\label{lem:RP}
The tiles of RP combine into rectangles, and all rectangles will appear in one of the following \textbf{two} orientations:
\[
\centering
\begin{tikzpicture}[scale=.3]
\draw[gray, thick] (0,0) -- (2,0);
\draw[gray, thick]  (0,0) -- (0,4);
\draw[gray, thick]  (0,4) -- (2,0);
\draw[gray, thick]  (2,4) -- (0,4);
\draw[gray, thick]  (2,0) -- (2,4);
\draw[gray, thick]  (4,0) -- (8,0);
\draw[gray, thick]  (4,0) -- (8,2);
\draw[gray, thick]  (8,0) -- (8,2);
\draw[gray, thick]  (4,0) -- (4,2);
\draw[gray, thick]  (4,2) -- (8,2);
\end{tikzpicture}
\]
\end{lemma}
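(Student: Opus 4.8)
The plan is to reduce the whole statement to one verification on the square substitution, after which routine substitution-tiling arguments finish it. Write $\sigma$ for the substitution (so $\sigma^{2}$ is the square substitution drawn above), and let $R$ be the shape obtained by gluing two RP-triangles along their common hypotenuse. As the triangle has legs in ratio $1:2$, $R$ is, up to similarity, a $1\times 2$ rectangle, and in a tiling it can occur only in the two ways pictured, which I will call \emph{tall} and \emph{wide}; a $90^{\circ}$ rotation interchanges them. So the content is: the triangles of RP partition, necessarily, into tall and wide copies of $R$.

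The single fact to check by hand is: $\sigma^{2}$ applied to a tall copy of $R$ produces a patch that is exactly tiled by copies of $R$, each tall or wide. Rotating by $90^{\circ}$ then gives the same statement for a wide copy of $R$, since rotation sends tall rectangles to wide ones and back. Hence, by induction on $n$, $\sigma^{2n}(R)$ is tiled by tall and wide copies of $R$ for all $n$ (substituting each copy of $R$ in the current stage). Now a routine primitivity argument shows that every bounded patch of RP occurs, up to congruence, inside $\sigma^{2n}(R)$ for $n$ large enough — it suffices to start from a triangle tile appearing in $\sigma^{2}(R)$ and substitute repeatedly. Such a patch therefore has its triangles grouped into tall and wide copies of $R$, and since this holds for every bounded patch, the triangles of RP themselves are partitioned into tall and wide copies of $R$. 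Because every triangle has exactly one hypotenuse, this partition is the only one possible, which gives the "combine into rectangles" assertion with the grouping forced, and the two pictured orientations are all that occur.

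The obstacle is entirely in that single fact, and inside it the delicate point concerns the copy of $R$ that straddles the common hypotenuse of the two halves of $R$. Indeed $\sigma^{2}$ applied to a single triangle produces an odd number of triangles (twenty-five), so they cannot all be matched into rectangles contained in that half of $\sigma^{2}(R)$; one triangle of each half must lie flush against the seam and be matched with its counterpart in the other half. So the real work is to inspect the substitution picture and confirm that each half of $\sigma^{2}(R)$ splits as twelve copies of $R$ lying in that half, together with a single triangle along the seam, that the two seam-triangles assemble into a twenty-fifth copy of $R$, and that all twenty-five rectangles so obtained are tall or wide. Granting that direct check, the induction and primitivity steps above complete the proof.
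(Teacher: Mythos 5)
Your overall strategy is sound and is essentially the same as the paper's: the paper also works with the square substitution, observes that all tiles occur in only the four axis-parallel rotations, and runs an induction in which every tile pairs with a $180^\circ$-rotated partner across its hypotenuse, the only subtlety being the tiles whose hypotenuses lie on the seam of a supertile. (The paper gets the ``hypotenuses only meet hypotenuses'' fact conceptually, from the irrationality of $\sqrt{5}$ versus the rational leg lengths, rather than by inspecting $\sigma^{2}(R)$; you could borrow that observation to shorten your base-case check.)

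However, the one concrete claim you make about the base case is wrong, so the ``direct check'' you ask the reader to grant is not grantable as stated. The level-$2$ supertile is similar to the prototile with linear scale factor $5$, so its hypotenuse has length $5\sqrt{5}$ and is subdivided into \emph{five} tile hypotenuses, not one: each half of $\sigma^{2}(R)$ contains five triangles flush against the seam and ten internal rectangles, and the five seam triangles of one half pair with the five of the other half to give the remaining five rectangles ($2\cdot 10+5=25$, which happens to agree numerically with your $2\cdot 12+1=25$, masking the error). Your parity argument only shows that an \emph{odd} number of triangles per half lie on the seam. The corrected inspection does succeed --- one also needs to note that the five seam triangles in each half are mutual translates, so that the $180^\circ$ rotation relating the two halves of $R$ matches them up hypotenuse-to-hypotenuse --- and with that correction your induction and the passage from supertiles to the whole tiling go through.
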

\begin{proof}
All the tiles in this second substitution are the original triangle rotated by one of the following 4 angles: $0^\circ, 90^\circ, 180^\circ, 270^\circ$.
As the sides have length $1,2, \sqrt{5}$ and as $\sqrt{5}$ is irrational, the hypotenuse of any tile will coincide with only the hypotenuse of another tile.

Therefore, a simple induction shows that for all $n$, all tiles in the $2n$-supertiles always appear in one of these four rotations and must be paired with a corresponding $180^\circ$ rotated triangle along its hypotenuse, unless it appears on the hypotenuse of the $2n$-supertile.
\end{proof}

We can now find the chromatic number of the rational pinwheel tiling.

\begin{theorem}\label{thm:RPvertices} 
The chromatic number of the rational pinwheel is $\chi(RP)=3$.
\end{theorem}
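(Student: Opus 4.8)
The plan is to establish the two inequalities $\chi(RP)\ge 3$ and $\chi(RP)\le 3$ separately.

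For the lower bound I would invoke Corollary~\ref{cor:bip}: it suffices to produce a face of odd length, and the natural candidate is the boundary of a single triangular tile. Each tile has legs of lengths $1$ and $2$ and hypotenuse $\sqrt5$ (up to the global scale). Its short leg is always a single edge of the graph, since $1$ is the smallest length occurring; its hypotenuse is always a single edge by Lemma~\ref{lem:RP}, since it can only ever be matched against the hypotenuse of its partner tile; and its long leg is a single edge precisely when it is matched against another length-$2$ leg rather than against two length-$1$ legs. One checks directly from the substitution --- it is already visible in the displayed patch of the squared substitution --- that this happens, i.e. that two of the $1\times2$ rectangles of Lemma~\ref{lem:RP} sometimes meet along a full long side. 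Any such tile is then a triangular face, so $RP$ is not bipartite and $\chi(RP)\ge 3$. (Conversely, were every long edge subdivided, every face would be a $3$- or $4$-gon with all $3$-gons absent, forcing bipartiteness; so the theorem already guarantees such a configuration exists.)

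For the upper bound I would give an explicit $3$-colouring built on the rectangle decomposition. Viewing $RP$ as a tiling by $1\times2$ rectangles, each carrying one diagonal (the shared hypotenuse of its two triangles), together with an extra vertex at the midpoint of every long edge that happens to be subdivided, I would colour the four corners of every rectangle $1,2,3,2$ cyclically, arranged so that the two endpoints of the diagonal receive the colours $1$ and $3$ and the other two corners the colour $2$; a subdivided long edge then joins a corner of colour $1$ (resp.\ $3$) to a corner of colour $2$, and I would give its midpoint the remaining colour $3$ (resp.\ $1$). A one-rectangle check shows that every tile edge, including the diagonal, joins vertices of different colours. The only remaining issue is global consistency: the colour of a vertex must agree across all rectangles containing it. I would prove this by the substitution method already used for the chair and Ammann--Beenker tilings: specify the colouring on a low-level supertile and show by induction on the substitution level that neighbouring supertiles --- which by Lemma~\ref{lem:RP} meet either hypotenuse-to-hypotenuse, or long-side-to-long-side, or long-side-to-two-short-sides --- are coloured compatibly along the shared super-edge. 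Alternatively, in the Ammann--Beenker style, one observes that the $1$-skeleton boundary of a level-$2k$ supertile is an even cycle (its short and long sides contribute a fixed, jointly even, number of edge-pieces by self-similarity, and its hypotenuse contributes one), so by Corollary~\ref{cor:bip} the union of all supertile boundaries is bipartite; one $2$-colours this border graph and extends to a $3$-colouring of each disjoint supertile interior.

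The main obstacle is precisely this global consistency step. One must enumerate the finitely many ways two $1\times2$ rectangles, and more importantly two supertiles, can abut in $RP$ and verify that the local rules never clash on a shared vertex; the rigidity of Lemma~\ref{lem:RP} is what keeps this list finite, but the bookkeeping is delicate, especially around the midpoint vertices of subdivided long edges and the orientation of each rectangle's diagonal (note that in the border-graph formulation one cannot recolour boundary vertices, so the interior extension must be checked, not merely asserted). A small preliminary point that feeds into all of this --- and that also makes the midpoint's colour well defined --- is that a subdivided long edge is always split exactly at its midpoint into two length-$1$ pieces, which follows from the fact that the only edge lengths occurring are $1$ and $2$.
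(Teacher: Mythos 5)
Your lower bound is fine (and in fact more careful than the paper, which simply observes that $RP$ contains $3$-cycles). The problem is the upper bound. Your primary colouring rule --- each $1\times 2$ rectangle gets its diagonal endpoints coloured $1$ and $3$ and its other two corners coloured $2$ --- is not merely delicate to verify globally; it is already inconsistent at the level of two adjacent rectangles, and this is visible in the displayed patch of the squared substitution. Take the rectangle $[10,14]\times[-6,-4]$ there, whose diagonal runs from $(10,-6)$ to $(14,-4)$, and its neighbour $[14,18]\times[-6,-4]$, whose diagonal runs from $(14,-6)$ to $(18,-4)$. The shared corner $(14,-4)$ is a diagonal endpoint of the first rectangle (so your rule forces colour $1$ or $3$) but an off-diagonal corner of the second (so your rule forces colour $2$). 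The same clash occurs at $(14,-6)$. So no amount of bookkeeping rescues this local rule; the diagonals of adjacent rectangles simply do not line up coherently enough for ``diagonal endpoints get $\{1,3\}$, the rest get $2$'' to be satisfiable.

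Your fallback --- two-colour a border graph of supertiles and extend into the interiors --- is the right idea and is essentially what the paper does, but the two steps you flag as needing verification are exactly the content of the proof, and neither is routine as you have set it up. The paper's resolution is to pass to the \emph{level-one supertiles} of the two rectangles. At that scale each supertile rectangle meets its neighbours only at six designated nodes (four corners and the midpoints of the two long sides), because each boundary segment between consecutive nodes is a single hypotenuse of a single tile and hypotenuses can only match hypotenuses (Lemma~\ref{lem:RP}); in particular the subdivided-edge/midpoint bookkeeping that dominates your write-up disappears entirely. The node graph then has all faces equal to $6$-cycles, so it is bipartite by Corollary~\ref{cor:bip}, and the interior extension reduces to exhibiting four explicitly coloured supertiles (two orientations times two boundary $2$-colourings), with the third colour confined to interior vertices. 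To turn your proposal into a proof you would need to (i) abandon the $1,2,3,2$ corner rule, and (ii) actually carry out the supertile-boundary bipartiteness argument and the finite interior check rather than deferring them.
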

\begin{proof}
An explicit 3-colouring of the rational pinwheel can be obtained the following way. From the previous lemma we know that the rational pinwheel is tiled by two rectangles that fit together in a very conventional way. If we replace 
\[
\centering
\begin{tikzpicture}[scale=.3]
\draw[gray, thick] (0,0) -- (2,0);
\draw[gray, thick]  (0,0) -- (0,4);
\draw[gray, thick]  (0,4) -- (2,0);
\draw[gray, thick]  (2,4) -- (0,4);
\draw[gray, thick]  (2,0) -- (2,4);
\draw[gray, thick]  (4,0) -- (8,0);
\draw[gray, thick]  (4,0) -- (8,2);
\draw[gray, thick]  (8,0) -- (8,2);
\draw[gray, thick]  (4,0) -- (4,2);
\draw[gray, thick]  (4,2) -- (8,2);
\end{tikzpicture}
\]
with their first level supertiles
\[
\centering
\begin{tikzpicture}[scale=.4]
\draw[line width=0.5mm] (0,0) -- (4,0);
\draw[line width=0.5mm] (4,0) -- (4,8);
\draw[line width=0.5mm] (4,8) -- (0,8);
\draw[line width=0.5mm] (0,0) --  (0,8);
\draw[gray, thick] (4,0) -- (0,8);
\draw[gray, thick] (4,4) -- (2.4,3.2);
\draw[gray, thick] (4,8) -- (.8,6.4);
\draw[gray, thick] (2.4,7.2) -- (4,4);
\draw[gray, thick] (2.4,7.2) -- (2.4,3.2);
\draw[gray, thick] (3.2,1.6) -- (0,0);
\draw[gray, thick] (1.6,4.8) -- (0,4);
\draw[gray, thick] (1.6,.8) -- (0,4);
\draw[gray, thick] (1.6,.8) -- (1.6,4.8);
\draw  (0,0) node {\textbullet};
\draw (4,0) node {\textbullet};
\draw (4,8) node {\textbullet};
\draw   (0,8) node {\textbullet};
\draw (0,4) node {\textbullet};
\draw (4,4) node {\textbullet};
\end{tikzpicture}%
\quad
\begin{tikzpicture}[scale=.4, rotate=90]
\draw[line width=0.5mm] (0,0) -- (4,0);
\draw[line width=0.5mm] (4,0) -- (4,8);
\draw[line width=0.5mm] (4,8) -- (0,8);
\draw[line width=0.5mm] (0,0) --  (0,8);
\draw[gray, thick] (4,0) -- (0,8);
\draw[gray, thick] (4,4) -- (2.4,3.2);
\draw[gray, thick] (4,8) -- (.8,6.4);
\draw[gray, thick] (2.4,7.2) -- (4,4);
\draw[gray, thick] (2.4,7.2) -- (2.4,3.2);
\draw[gray, thick] (3.2,1.6) -- (0,0);
\draw[gray, thick] (1.6,4.8) -- (0,4);
\draw[gray, thick] (1.6,.8) -- (0,4);
\draw[gray, thick] (1.6,.8) -- (1.6,4.8);
\draw (0,0) node {\textbullet};
\draw    (4,0) node {\textbullet};
\draw   (4,8) node {\textbullet};
\draw (0,8) node {\textbullet};
\draw  (0,4) node {\textbullet};
\draw (4,4) node {\textbullet};
\end{tikzpicture}%
\]
we see that the rational pinwheel has a subtiling by the rectangles
\[
\centering
\begin{tikzpicture}[scale=.4]
\draw[line width=0.5mm] (0,0) -- (4,0);
\draw[line width=0.5mm] (4,0) -- (4,8);
\draw[line width=0.5mm] (4,8) -- (0,8);
\draw[line width=0.5mm] (0,0) --  (0,8);
\draw  (0,0) node {\textbullet};
\draw (4,0) node {\textbullet};
\draw (4,8) node {\textbullet};
\draw   (0,8) node {\textbullet};
\draw (0,4) node {\textbullet};
\draw (4,4) node {\textbullet};
\end{tikzpicture}%
\quad
\begin{tikzpicture}[scale=.4, rotate=90]
\draw[line width=0.5mm] (0,0) -- (4,0);
\draw[line width=0.5mm] (4,0) -- (4,8);
\draw[line width=0.5mm] (4,8) -- (0,8);
\draw[line width=0.5mm] (0,0) --  (0,8);
\draw (0,0) node {\textbullet};
\draw    (4,0) node {\textbullet};
\draw   (4,8) node {\textbullet};
\draw (0,8) node {\textbullet};
\draw  (0,4) node {\textbullet};
\draw (4,4) node {\textbullet};
\end{tikzpicture}%
\]
with these rectangles meeting at the nodes indicated.
This forms a bipartite graph which can be two coloured. Hence, we can two colour these nodes and fill in the interiors using the following four tiles:
\[
\centering
\begin{tikzpicture}[scale=.4]
\draw[line width=0.5mm] (0,0) -- (4,0);
\draw[line width=0.5mm] (4,0) -- (4,8);
\draw[line width=0.5mm] (4,8) -- (0,8);
\draw[line width=0.5mm] (0,0) --  (0,8);
\draw[gray, thick] (4,0) -- (0,8);
\draw[gray, thick] (4,4) -- (2.4,3.2);
\draw[gray, thick] (4,8) -- (.8,6.4);
\draw[gray, thick] (2.4,7.2) -- (4,4);
\draw[gray, thick] (2.4,7.2) -- (2.4,3.2);
\draw[gray, thick] (3.2,1.6) -- (0,0);
\draw[gray, thick] (1.6,4.8) -- (0,4);
\draw[gray, thick] (1.6,.8) -- (0,4);
\draw[gray, thick] (1.6,.8) -- (1.6,4.8);
\draw[color=red]   (4,0) node {\textbullet};
\draw [color=blue](0,0) node {\textbullet};
\draw[color=blue] (0,8) node {\textbullet};
\draw[color=red]   (4,8) node {\textbullet};
\draw [color=red](2.4,3.2) node {\textbullet};
\draw [color=red](.8,6.4) node {\textbullet};
\draw[color=green] (2.4,7.2) node {\textbullet};
\draw [color=blue](3.2,1.6) node {\textbullet};
\draw [blue](1.6,4.8) node {\textbullet};
\draw [color=blue](4,4) node {\textbullet};
\draw[color=red] (0,4) node {\textbullet};
\draw [color=green](1.6,0.8) node {\textbullet};
\end{tikzpicture}%
\quad
\begin{tikzpicture}[scale=.4]
\draw[line width=0.5mm] (0,0) -- (4,0);
\draw[line width=0.5mm] (4,0) -- (4,8);
\draw[line width=0.5mm] (4,8) -- (0,8);
\draw[line width=0.5mm] (0,0) --  (0,8);
\draw[gray, thick] (4,0) -- (0,8);
\draw[gray, thick] (4,4) -- (2.4,3.2);
\draw[gray, thick] (4,8) -- (.8,6.4);
\draw[gray, thick] (2.4,7.2) -- (4,4);
\draw[gray, thick] (2.4,7.2) -- (2.4,3.2);
\draw[gray, thick] (3.2,1.6) -- (0,0);
\draw[gray, thick] (1.6,4.8) -- (0,4);
\draw[gray, thick] (1.6,.8) -- (0,4);
\draw[gray, thick] (1.6,.8) -- (1.6,4.8);
\draw[color=blue]   (4,0) node {\textbullet};
\draw [color=red](0,0) node {\textbullet};
\draw[color=red] (0,8) node {\textbullet};
\draw[color=blue]   (4,8) node {\textbullet};
\draw [color=blue](2.4,3.2) node {\textbullet};
\draw [color=blue](.8,6.4) node {\textbullet};
\draw[color=green] (2.4,7.2) node {\textbullet};
\draw [color=red](3.2,1.6) node {\textbullet};
\draw [color=red](1.6,4.8) node {\textbullet};
\draw [color=red](4,4) node {\textbullet};
\draw[color=blue] (0,4) node {\textbullet};
\draw [color=green](1.6,0.8) node {\textbullet};
\end{tikzpicture}%
\quad
\begin{tikzpicture}[scale=.4, rotate=90]
\draw[line width=0.5mm] (0,0) -- (4,0);
\draw[line width=0.5mm] (4,0) -- (4,8);
\draw[line width=0.5mm] (4,8) -- (0,8);
\draw[line width=0.5mm] (0,0) --  (0,8);
\draw[gray, thick] (4,0) -- (0,8);
\draw[gray, thick] (4,4) -- (2.4,3.2);
\draw[gray, thick] (4,8) -- (.8,6.4);
\draw[gray, thick] (2.4,7.2) -- (4,4);
\draw[gray, thick] (2.4,7.2) -- (2.4,3.2);
\draw[gray, thick] (3.2,1.6) -- (0,0);
\draw[gray, thick] (1.6,4.8) -- (0,4);
\draw[gray, thick] (1.6,.8) -- (0,4);
\draw[gray, thick] (1.6,.8) -- (1.6,4.8);
\draw[color=red]   (4,0) node {\textbullet};
\draw [color=blue](0,0) node {\textbullet};
\draw[color=blue] (0,8) node {\textbullet};
\draw[color=red]   (4,8) node {\textbullet};
\draw [color=red](2.4,3.2) node {\textbullet};
\draw [color=red](.8,6.4) node {\textbullet};
\draw[color=green] (2.4,7.2) node {\textbullet};
\draw [color=blue](3.2,1.6) node {\textbullet};
\draw [color=blue](1.6,4.8) node {\textbullet};
\draw [color=blue](4,4) node {\textbullet};
\draw[color=red] (0,4) node {\textbullet};
\draw [color=green](1.6,0.8) node {\textbullet};
\end{tikzpicture}%
\quad 
\begin{tikzpicture}[scale=.4, rotate=90]
\draw[line width=0.5mm] (0,0) -- (4,0);
\draw[line width=0.5mm] (4,0) -- (4,8);
\draw[line width=0.5mm] (4,8) -- (0,8);
\draw[line width=0.5mm] (0,0) --  (0,8);
\draw[gray, thick] (4,0) -- (0,8);
\draw[gray, thick] (4,4) -- (2.4,3.2);
\draw[gray, thick] (4,8) -- (.8,6.4);
\draw[gray, thick] (2.4,7.2) -- (4,4);
\draw[gray, thick] (2.4,7.2) -- (2.4,3.2);
\draw[gray, thick] (3.2,1.6) -- (0,0);
\draw[gray, thick] (1.6,4.8) -- (0,4);
\draw[gray, thick] (1.6,.8) -- (0,4);
\draw[gray, thick] (1.6,.8) -- (1.6,4.8);
\draw[color=blue]   (4,0) node {\textbullet};
\draw [color=red](0,0) node {\textbullet};
\draw[color=red] (0,8) node {\textbullet};
\draw[color=blue]   (4,8) node {\textbullet};
\draw [color=blue](2.4,3.2) node {\textbullet};
\draw [color=blue](.8,6.4) node {\textbullet};
\draw[color=green] (2.4,7.2) node {\textbullet};
\draw [color=red](3.2,1.6) node {\textbullet};
\draw [color=red](1.6,4.8) node {\textbullet};
\draw [color=red](4,4) node {\textbullet};
\draw[color=blue] (0,4) node {\textbullet};
\draw [color=green](1.6,0.8) node {\textbullet};
\end{tikzpicture}%
\]

Since the rational pinwheel contains 3-cycles, it is not bipartite and hence $\chi(RP)=3$.
\end{proof}

\smallskip

Next we find the chromatic index.

\begin{proposition}\label{prop:ciRP} The chromatic index of the Rational Pinwheel is $\chi'(RP)=8$. 
\end{proposition}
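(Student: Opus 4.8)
The plan is to sandwich $\chi'(RP)$ between $8$ and $8$. For the upper bound I would invoke the Directional Alternating Colouring Method. By Lemma~\ref{lem:RP} (indeed by its proof), every tile of $RP$ is a rotation of the basic $1$–$2$–$\sqrt 5$ triangle by $0^\circ$, $90^\circ$, $180^\circ$ or $270^\circ$. Consequently the two legs of every triangle are horizontal or vertical, while the hypotenuse of every triangle is parallel to $\begin{bmatrix}1\\-2\end{bmatrix}$ or to $\begin{bmatrix}2\\1\end{bmatrix}$ (a $90^\circ$ turn interchanges these two directions). Thus every edge of $RP$ is parallel to one of the four pairwise non-parallel vectors $\vec{u_1}=\begin{bmatrix}1\\0\end{bmatrix}$, $\vec{u_2}=\begin{bmatrix}0\\1\end{bmatrix}$, $\vec{u_3}=\begin{bmatrix}1\\-2\end{bmatrix}$, $\vec{u_4}=\begin{bmatrix}2\\1\end{bmatrix}$, and Theorem~\ref{thm:direct colouring} produces an explicit $8$-colouring of $E(RP)$, so $\chi'(RP)\le 8$. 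Since only four lines (one per direction) pass through any given vertex, and each such line carries at most one edge incident to that vertex on each side, this already forces $\Delta(RP)\le 8$ at no extra cost.

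For the lower bound I would exhibit a single vertex of degree $8$. Using the rectangle subtiling of Lemma~\ref{lem:RP}, one checks — by iterating the square of the substitution a few steps — that $RP$ contains a vertex $v$ surrounded by four of the $2\times 4$ rectangles, each having $v$ as an endpoint of its diagonal (i.e. of the hypotenuse shared by its two triangles); then the four shared rectangle sides at $v$ together with the four rectangle diagonals at $v$ give eight distinct edges, so $\deg(v)=8$ and hence $\Delta(RP)=8$. As $RP$ is an infinite planar graph with $\Delta\ge 7$, Theorem~\ref{thm:greaterthan7} now yields $\chi'(RP)=\Delta(RP)=8$. Alternatively, $8=\Delta(RP)\le\chi'(RP)\le 8$ by Corollary~\ref{cor:viz} combined with the explicit $8$-colouring above, so one can avoid Theorem~\ref{thm:greaterthan7} entirely.

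The direction bookkeeping is routine once Lemma~\ref{lem:RP} is in hand, and the inequality $\Delta(RP)\le 8$ comes for free. The \emph{only} genuinely delicate point — and where I expect the real work to lie — is certifying that a vertex of degree exactly $8$ actually occurs, since this does not follow from the upper bound: one must either display an explicit patch of $RP$ around such a vertex or track the local vertex configurations generated by the substitution until a degree-$8$ vertex is forced to appear. (Once that is done, no further case analysis is needed, unlike in the face-colouring arguments.)
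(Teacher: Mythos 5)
Your proposal is correct and follows essentially the same route as the paper: the upper bound via the Directional Alternating Colouring Method applied to the four edge directions coming from Lemma~\ref{lem:RP}, and the lower bound by exhibiting a degree-$8$ vertex arising from the rectangle structure (the paper does this concretely by drawing two second-level supertile rectangles side by side and pointing at their common central vertex). Your closing caveat is well placed — the existence of the degree-$8$ vertex is exactly the step the paper discharges with an explicit patch rather than by abstract argument.
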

\begin{proof}
We first observe that RP has a vertex of degree 8 by looking at the central vertex of two rectangles formed by second level supertiles:
\[
\centering
\begin{tikzpicture}[scale=.4]
\draw[line width=0.5mm] (10,-6) -- (20,-6);
\draw[line width=0.5mm] (20,-6) -- (20,14);
\draw[line width=0.5mm] (20,14) -- (10,14);
\draw[line width=0.5mm] (10,-6) -- (10,14);
\draw[gray, thick] (10,14) -- (20,-6);
\draw[gray, thick] (10,-6) -- (18,-2);
\draw[gray, thick] (10,4) -- (14,6);
\draw[gray, thick] (10,4) -- (14,-4);
\draw[gray, thick] (14,-4) -- (14,10);
\draw[gray, thick] (12,12) -- (20,12);
\draw[gray, thick] (12,12) -- (16,14);
\draw[gray, thick] (12,10) -- (20,14);
\draw[gray, thick]  (18,-2) -- (18,-6);
\draw[gray, thick]  (14,-4) -- (14,-6);
\draw[gray, thick]  (14,-4) -- (18,-4);
\draw[gray, thick]  (14,-6) -- (18,-4);
\draw[gray, thick]  (10,-4) -- (14,-4);
\draw[gray, thick]  (10,0) -- (14,0);
\draw[gray, thick]  (10,4) -- (14,4);
\draw[gray, thick]  (10,0) -- (12,-4);
\draw[gray, thick]  (12,4) -- (12,-4);
\draw[gray, thick]  (12,4) -- (14,0);
\draw[gray, thick]  (14,10) -- (16,6);
\draw[gray, thick]  (16,12) -- (20,4);
\draw[gray, thick]  (16,4) -- (20,4);
\draw[gray, thick]  (18,4) -- (18,12);
\draw[gray, thick]  (20,8) -- (18,12);
\draw[gray, thick]  (18,4) -- (16,8);
\draw[gray, thick]  (16,8) -- (20,8);
\draw[gray, thick]  (16,2) -- (20,4);
\draw[gray, thick]  (14,2) -- (20,2);
\draw[gray, thick]  (18,2) -- (18,-2);
\draw[gray, thick]  (18,2) -- (20,-2);
\draw[gray, thick]  (16,2) -- (16,14);
\draw[gray, thick]  (14,-2) -- (20,-2);
\draw[gray, thick]  (14,2) -- (16,-2);
\draw[gray, thick]  (16,2) -- (16,-2);
\draw[gray, thick]  (10,6) -- (16,6);
\draw[gray, thick]  (10,10) -- (16,10);
\draw[gray, thick]  (10,10) -- (12,6);
\draw[gray, thick]  (12,14) -- (12,6);
\end{tikzpicture}%
\hspace{-0.4mm}
\begin{tikzpicture}[scale=.4]
\draw[line width=0.5mm] (10,-6) -- (20,-6);
\draw[line width=0.5mm] (20,-6) -- (20,14);
\draw[line width=0.5mm] (20,14) -- (10,14);
\draw[line width=0.5mm] (10,-6) -- (10,14);
\draw[gray, thick] (10,14) -- (20,-6);
\draw[gray, thick] (10,-6) -- (18,-2);
\draw[gray, thick] (10,4) -- (14,6);
\draw[gray, thick] (10,4) -- (14,-4);
\draw[gray, thick] (14,-4) -- (14,10);
\draw[gray, thick] (12,12) -- (20,12);
\draw[gray, thick] (12,12) -- (16,14);
\draw[gray, thick] (12,10) -- (20,14);
\draw[gray, thick]  (18,-2) -- (18,-6);
\draw[gray, thick]  (14,-4) -- (14,-6);
\draw[gray, thick]  (14,-4) -- (18,-4);
\draw[gray, thick]  (14,-6) -- (18,-4);
\draw[gray, thick]  (10,-4) -- (14,-4);
\draw[gray, thick]  (10,0) -- (14,0);
\draw[gray, thick]  (10,4) -- (14,4);
\draw[gray, thick]  (10,0) -- (12,-4);
\draw[gray, thick]  (12,4) -- (12,-4);
\draw[gray, thick]  (12,4) -- (14,0);
\draw[gray, thick]  (14,10) -- (16,6);
\draw[gray, thick]  (16,12) -- (20,4);
\draw[gray, thick]  (16,4) -- (20,4);
\draw[gray, thick]  (18,4) -- (18,12);
\draw[gray, thick]  (20,8) -- (18,12);
\draw[gray, thick]  (18,4) -- (16,8);
\draw[gray, thick]  (16,8) -- (20,8);
\draw[gray, thick]  (16,2) -- (20,4);
\draw[gray, thick]  (14,2) -- (20,2);
\draw[gray, thick]  (18,2) -- (18,-2);
\draw[gray, thick]  (18,2) -- (20,-2);
\draw[gray, thick]  (16,2) -- (16,14);
\draw[gray, thick]  (14,-2) -- (20,-2);
\draw[gray, thick]  (14,2) -- (16,-2);
\draw[gray, thick]  (16,2) -- (16,-2);
\draw[gray, thick]  (10,6) -- (16,6);
\draw[gray, thick]  (10,10) -- (16,10);
\draw[gray, thick]  (10,10) -- (12,6);
\draw[gray, thick]  (12,14) -- (12,6);
\end{tikzpicture}%
\]

By Lemma~\ref{lem:RP} all the edges in the rational pinwheel are parallel to one of the vectors $(1,0), (0,1), (2,1)$ and $(-1,2)$.
Therefore, an explicit $8$ colouring of RP is given by the Directional Alternating Colouring Method with vectors $(1,0), (0,1), (2,1)$ and $(-1,2)$.
\end{proof}

\begin{remark} A non constructive proof of Prop.~\ref{prop:ciRP} is given by Thm.~\ref{thm:greaterthan7}.
\end{remark}

\smallskip

We complete the section by colouring the faces of the RP.

\begin{theorem} The minimal number of colours needed to colour the faces of RP is 3. 
\end{theorem}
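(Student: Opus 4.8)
The plan is to prove the two inequalities $\chi_{\mathrm{face}}(RP)\ge 3$ and $\chi_{\mathrm{face}}(RP)\le 3$ separately: the lower bound from a bounded local obstruction, and the upper bound from an explicit colouring scheme on supertiles, in the spirit of the chair tiling face colouring proved above.

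For the lower bound, note first that $RP$ is \emph{not} edge-to-edge. By Lemma~\ref{lem:RP} the triangular tiles are glued along their hypotenuses into $1\times 2$ rectangles whose boundaries consist only of legs, of lengths $1$ and $2$; hence a leg of length $2$ is shared with two legs of length $1$ coming from two different tiles (this is already visible inside a single first-level supertile, e.g.\ in the square-substitution picture). At such a T-junction the planar graph $RP$ has a vertex of degree $3$, and the three tiles around it are pairwise adjacent (each of the three edges at the vertex is shared by a different pair of them). Three pairwise adjacent faces must receive three distinct colours, so $\chi_{\mathrm{face}}(RP)\ge 3$. Equivalently, the dual graph of $RP$ contains a triangle and is therefore not bipartite.

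For the upper bound I would again use Lemma~\ref{lem:RP}. Pass to the tiling of the plane by $1\times 2$ rectangles, each split by its hypotenuse diagonal into two triangular tiles; by the lemma these rectangles occur in exactly two orientations and meet leg-to-leg (a length-$2$ leg possibly facing two length-$1$ legs). After one or two further inflations, group these rectangles into the (finitely many) oriented supertile patterns that actually occur, and prescribe, once and for all, a $3$-colouring of the triangular tiles inside each such pattern so that: (i) the colouring is proper inside every supertile; and, crucially, (ii) the sequence of colours the rule writes \emph{along} any boundary leg of a supertile depends only on that leg --- its length and its direction --- and not on which supertile it borders. Granting (ii), two supertiles sharing (part of) a leg automatically agree on the overlap, no cross-boundary clash occurs, and the rule defines a proper $3$-face-colouring of all of $RP$; no appeal to the de Bruijn--Erd\"os theorem is needed, since the rule is an explicit algorithm.

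To build a rule with property (ii) I would proceed exactly as in the chair tiling proof: colour the oriented supertile patterns by hand, enumerate how two of them can abut along a super-leg, observe that the only possible colour clashes occur for a short list of configurations, and eliminate each such configuration with a geometric impossibility argument coming from the substitution --- e.g.\ that a hypotenuse meets only a hypotenuse, and that certain relative placements of the two rectangle types simply never appear in $RP$ (an induction on supertile level, as in the proof of Lemma~\ref{lem:RP}). I expect this consistency check to be the main obstacle: one must choose the right supertile level and the right intra-supertile colouring so that the induced boundary data is orientation-independent, and then run a finite --- but not tiny --- conflict analysis, killing each would-be clash via the substitution. As with the chair and Ammann--Beenker tilings, once the correct rule is identified the verification is mechanical; locating the rule is the creative step. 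Combining the two bounds gives $\chi_{\mathrm{face}}(RP)=3$.
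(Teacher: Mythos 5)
Your upper-bound argument is a plan rather than a proof, and the part you defer (``locating the rule is the creative step'') is essentially the entire content of the theorem. Beyond being incomplete, the framework you propose is off in two ways. First, your consistency condition (ii) --- that the colour sequence written along a boundary leg depend only on the leg and not on which supertile borders it --- would make the two supertiles on either side of that leg write the \emph{same} sequence on the two sides, which for a face colouring is exactly a clash, not the absence of one; what is needed is that the two induced boundary patterns be \emph{compatible} (adjacent tiles across the boundary always differ), which is a different and weaker-looking but subtler condition. Second, legs are the wrong matching unit: a length-$2$ leg can face a length-$1$ leg together with half of a length-$2$ leg, so the ways legs abut are not controlled. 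The paper's resolution is to take the \emph{hypotenuses of the level-one supertiles} as the unit: by the irrationality argument of Lemma~\ref{lem:RP} these always coincide with full hypotenuses of neighbouring level-one supertiles, the boundary of each coloured second-level rectangle decomposes into such hypotenuses carrying a fixed colour pattern (a short edge of one colour followed by middle edges of the other two), and one checks only that the bottom/left pattern (Type I) is compatible with the top/right pattern (Type II) in the two possible positions. Without a concrete colouring of the two second-level rectangles and this finite compatibility check, the inequality $\le 3$ is not established.

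Your lower bound is a welcome addition (the paper does not spell it out), but the configuration you cite does not have degree $3$. At a T-junction in the interior of a length-$2$ leg, the two tiles whose short legs lie along it contribute angles at the junction summing to $180^{\circ}$ only if both are right angles, which would force their length-$2$ legs to coincide as a shared perpendicular edge; in the configurations actually visible in the first- and second-level supertiles one of the two tiles meets the junction at an acute angle, a hypotenuse of a third tile terminates there, and the four surrounding tiles form a $4$-cycle, not a triangle, in the dual. Genuine degree-$3$ vertices do exist in $RP$ --- for instance at an interior point of the hypotenuse of a level-one supertile where two tiles of that supertile meet at their right angles while a single tile of the adjacent supertile covers the point in the interior of one of its legs --- and any vertex of odd degree already rules out a $2$-face-colouring, so the conclusion $\ge 3$ is correct, but your justification needs to be repaired by exhibiting such a vertex.
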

\begin{proof}
An explicit 3-colouring of the faces of RP is given by colouring the second level supertile versions of the two rectangles of Lemma~\ref{lem:RP} and showing that these tiles fit together with no conflicts. The colourings in question are:
\[
\centering
\begin{tikzpicture}[scale=.4]
 \draw [decorate,decoration={brace,amplitude=4pt},xshift=-0.5cm,yshift=0pt]
      (10,-6) -- (10,4) node [midway,left,xshift=-.1cm] {\textbf{Type I}};
 \draw [decorate,decoration={brace,amplitude=4pt},xshift=-0.5cm,yshift=0pt]
      (10,4) -- (10,14) node [midway,left,xshift=-.1cm] {\textbf{Type I}}; 
 \draw [decorate,decoration={brace,amplitude=4pt},xshift=0pt,yshift=.2cm]
      (10,14) -- (20,14) node [midway,yshift=.3cm] {\textbf{Type II}};
  \draw [decorate,decoration={brace,mirror,amplitude=4pt},xshift=0.5cm,yshift=0pt]
      (20,-6) -- (20,4) node [midway,right,xshift=.1cm] {\textbf{Type II}};
 \draw [decorate,decoration={brace,mirror,amplitude=4pt},xshift=0.5cm,yshift=0pt]
      (20,4) -- (20,14) node [midway,right,xshift=.1cm] {\textbf{Type II}}; 
 \draw [decorate,decoration={brace,mirror,amplitude=4pt},xshift=0pt,yshift=-.2cm]
      (10,-6) -- (20,-6) node [midway,yshift=-.3cm] {\textbf{Type I}};      
\path[fill=green,opacity=0.6] (10,-6) --(14,-4)--(10,-4)--(10,-6);
\path[fill=green,opacity=0.6] (14,-6) --(18,-4)--(14,-4)--(14,-6);
\path[fill=green,opacity=0.6] (18,-6) --(18,-2)--(20,-6)--(18,-6);
\path[fill=blue,opacity=0.6] (10,-6) --(14,-6)--(14,-4)--(10,-6);
\path[fill=red,opacity=0.6] (14,-6) --(18,-6)--(18,-4)--(14,-6);
\path[fill=blue,opacity=0.6] (20,-6) --(20,-2)--(18,-2)--(20,-6);
\path[fill=red,opacity=0.6] (10,-4) --(10,0)--(12,-4)--(10,-4);
\path[fill=red,opacity=0.6] (12,-4) --(12,0)--(14,-4)--(12,-4);
\path[fill=green,opacity=0.6] (12,-4) --(10,0)--(12,0)--(12,-4);
\path[fill=green,opacity=0.6] (14,-4) --(12,0)--(14,0)--(14,-4);
\path[fill=red,opacity=0.6] (14,-4) --(14,-2)--(18,-2)--(14,-4);
\path[fill=blue,opacity=0.6] (14,-4) --(18,-2)--(18,-4)--(14,-4);
\path[fill=blue,opacity=0.6] (14,-2) --(14,2)--(16,-2)--(14,-2);
\path[fill=blue,opacity=0.6] (16,-2) --(16,2)--(18,-2)--(16,-2);
\path[fill=red,opacity=0.6] (18,-2) --(18,2)--(20,-2)--(18,-2);
\path[fill=green,opacity=0.6] (16,-2) --(16,2)--(14,2)--(16,-2);
\path[fill=green,opacity=0.6] (18,-2) --(18,2)--(16,2)--(18,-2);
\path[fill=green,opacity=0.6] (20,-2) --(20,2)--(18,2)--(20,-2);
\path[fill=blue,opacity=0.6] (10,0) --(10,4)--(12,0)--(10,0);
\path[fill=green,opacity=0.6] (12,4) --(10,4)--(12,0)--(12,4);
\path[fill=blue,opacity=0.6] (12,0) --(12,4)--(14,0)--(12,0);
\path[fill=green,opacity=0.6] (14,4) --(12,4)--(14,0)--(14,4);
\path[fill=green,opacity=0.6] (10,4) --(10,6)--(14,6)--(10,4);
\path[fill=red,opacity=0.6] (10,4) --(14,4)--(14,6)--(10,4);
\path[fill=red,opacity=0.6] (10,10) --(12,6)--(10,6)--(10,10);
\path[fill=green,opacity=0.6] (10,10) --(12,6)--(12,10)--(10,10);
\path[fill=blue,opacity=0.6] (12,10) --(14,6)--(12,6)--(12,10);
\path[fill=green,opacity=0.6] (12,10) --(14,6)--(14,10)--(12,10);
\path[fill=red,opacity=0.6] (14,10) --(16,6)--(14,6)--(14,10);
\path[fill=green,opacity=0.6] (14,10) --(16,6)--(16,10)--(14,10);
\path[fill=blue,opacity=0.6] (10,14) --(12,10)--(10,10)--(10,14);
\path[fill=red,opacity=0.6] (10,14) --(12,10)--(12,14)--(10,14);
\path[fill=green,opacity=0.6] (12,10) --(16,12)--(12,12)--(12,10);
\path[fill=green,opacity=0.6] (12,12) --(16,14)--(12,14)--(12,12);
\path[fill=red,opacity=0.6] (12,12) --(16,14)--(16,12)--(12,12);
\path[fill=red,opacity=0.6] (12,10) --(16,12)--(16,10)--(12,10);
\path[fill=blue,opacity=0.6] (16,12) --(20,14)--(16,14)--(16,12);
\path[fill=red,opacity=0.6] (16,12) --(20,14)--(20,12)--(16,12);
\path[fill=blue,opacity=0.6] (16,12) --(16,8)--(18,8)--(16,12);
\path[fill=green,opacity=0.6] (16,12) --(18,12)--(18,8)--(16,12);
\path[fill=red,opacity=0.6] (18,12) --(18,8)--(20,8)--(18,12);
\path[fill=green,opacity=0.6] (18,12) --(20,12)--(20,8)--(18,12);
\path[fill=red,opacity=0.6] (16,8) --(16,4)--(18,4)--(16,8);
\path[fill=green,opacity=0.6] (16,8) --(18,8)--(18,4)--(16,8);
\path[fill=red,opacity=0.6] (18,8) --(18,4)--(20,4)--(18,8);
\path[fill=blue,opacity=0.6] (18,8) --(20,8)--(20,4)--(18,8);
\path[fill=blue,opacity=0.6] (14,6) --(16,2)--(14,2)--(14,6);
\path[fill=green,opacity=0.6] (14,6) --(16,2)--(16,6)--(14,6);
\path[fill=blue,opacity=0.6] (20,4) --(16,2)--(16,4)--(20,4);
\path[fill=red,opacity=0.6] (20,4) --(16,2)--(20,2)--(20,4);
\draw[gray, thick] (10,-6) -- (20,-6);
\draw[gray, thick] (20,-6) -- (20,14);
\draw[gray, thick] (20,14) -- (10,14);
\draw[gray, thick] (10,-6) -- (10,14);
\draw[gray, thick] (10,14) -- (20,-6);
\draw[gray, thick] (10,-6) -- (18,-2);
\draw[gray, thick] (10,4) -- (14,6);
\draw[gray, thick] (10,4) -- (14,-4);
\draw[gray, thick] (14,-4) -- (14,10);
\draw[gray, thick] (12,12) -- (20,12);
\draw[gray, thick] (12,12) -- (16,14);
\draw[gray, thick] (12,10) -- (20,14);
\draw[gray, thick]  (18,-2) -- (18,-6);
\draw[gray, thick]  (14,-4) -- (14,-6);
\draw[gray, thick]  (14,-4) -- (18,-4);
\draw[gray, thick]  (14,-6) -- (18,-4);
\draw[gray, thick]  (10,-4) -- (14,-4);
\draw[gray, thick]  (10,0) -- (14,0);
\draw[gray, thick]  (10,4) -- (14,4);
\draw[gray, thick]  (10,0) -- (12,-4);
\draw[gray, thick]  (12,4) -- (12,-4);
\draw[gray, thick]  (12,4) -- (14,0);
\draw[gray, thick]  (14,10) -- (16,6);
\draw[gray, thick]  (16,12) -- (20,4);
\draw[gray, thick]  (16,4) -- (20,4);
\draw[gray, thick]  (18,4) -- (18,12);
\draw[gray, thick]  (20,8) -- (18,12);
\draw[gray, thick]  (18,4) -- (16,8);
\draw[gray, thick]  (16,8) -- (20,8);
\draw[gray, thick]  (16,2) -- (20,4);
\draw[gray, thick]  (14,2) -- (20,2);
\draw[gray, thick]  (18,2) -- (18,-2);
\draw[gray, thick]  (18,2) -- (20,-2);
\draw[gray, thick]  (16,2) -- (16,14);
\draw[gray, thick]  (14,-2) -- (20,-2);
\draw[gray, thick]  (14,2) -- (16,-2);
\draw[gray, thick]  (16,2) -- (16,-2);
\draw[gray, thick]  (10,6) -- (16,6);
\draw[gray, thick]  (10,10) -- (16,10);
\draw[gray, thick]  (10,10) -- (12,6);
\draw[gray, thick]  (12,14) -- (12,6);
\end{tikzpicture}
\]
\[
\centering
\begin{tikzpicture}[scale=.4]
 \draw [decorate,decoration={brace,amplitude=4pt},xshift=-0.5cm,yshift=0pt]
      (10,-2) -- (10,8) node [midway,left,xshift=-.1cm] {\textbf{Type I}};
 \draw [decorate,decoration={brace,amplitude=4pt},xshift=0pt,yshift=.2cm]
      (10,8) -- (20,8) node [midway,yshift=.3cm] {\textbf{Type II}};
 \draw [decorate,decoration={brace,amplitude=4pt},xshift=0pt,yshift=.2cm]
      (20,8) -- (30,8) node [midway,yshift=.3cm] {\textbf{Type II}};
  \draw [decorate,decoration={brace,mirror,amplitude=4pt},xshift=0.5cm,yshift=0pt]
      (30,-2) -- (30,8) node [midway,right,xshift=.1cm] {\textbf{Type II}};
 \draw [decorate,decoration={brace,mirror,amplitude=4pt},xshift=0pt,yshift=-.2cm]
      (10,-2) -- (20,-2) node [midway,yshift=-.3cm] {\textbf{Type I}};  
 \draw [decorate,decoration={brace,mirror,amplitude=4pt},xshift=0pt,yshift=-.2cm]
      (20,-2) -- (30,-2) node [midway,yshift=-.3cm] {\textbf{Type I}};
\path[fill=red,opacity=0.6] (10,8)--(12,4)--(12,8)--(10,8);
\path[fill=green,opacity=0.6] (12,8)--(12,6)--(16,8)--(12,8);
\path[fill=blue,opacity=0.6] (16,8)--(16,6)--(20,8)--(16,8);
\path[fill=red,opacity=0.6] (20,8)--(22,4)--(22,8)--(20,8);
\path[fill=green,opacity=0.6] (22,8)--(22,6)--(26,8)--(26,8);
\path[fill=blue,opacity=0.6] (26,8)--(26,6)--(30,8)--(26,8);
\path[fill=red,opacity=0.6] (30,8)--(26,6)--(30,6)--(30,8);
\path[fill=green,opacity=0.6] (30,6)--(28,6)--(30,2)--(30,6);
\path[fill=blue,opacity=0.6] (30,2)--(28,2)--(30,-2)--(30,2);
\path[fill=green,opacity=0.6] (30,-2)--(28,-2)--(28,2)--(30,-2);
\path[fill=red,opacity=0.6] (28,-2)--(24,-2)--(28,0)--(28,-2);
\path[fill=blue,opacity=0.6] (24,-2)--(20,-2)--(24,0)--(24,-2);
\path[fill=green,opacity=0.6] (20,-2)--(18,-2)--(18,2)--(20,-2);
\path[fill=red,opacity=0.6] (18,-2)--(14,-2)--(18,0)--(18,-2);
\path[fill=blue,opacity=0.6] (14,-2)--(10,-2)--(14,0)--(14,-2);
\path[fill=green,opacity=0.6] (10,-2)--(14,0)--(10,0)--(10,-2);
\path[fill=red,opacity=0.6] (10,0)--(12,0)--(10,4)--(10,0);
\path[fill=blue,opacity=0.6] (10,4)--(12,4)--(10,8)--(10,4);
\path[fill=red,opacity=0.6] (12,6)--(16,6)--(16,8)--(12,6);
\path[fill=red,opacity=0.6] (16,6)--(20,6)--(20,8)--(16,6);
\path[fill=blue,opacity=0.6] (20,8)--(20,4)--(22,4)--(20,8);
\path[fill=red,opacity=0.6] (22,6)--(26,6)--(26,8)--(22,6);
\path[fill=green,opacity=0.6] (12,6)--(12,4)--(16,6)--(12,6);
\path[fill=blue,opacity=0.6] (12,4)--(16,4)--(16,6)--(12,4);
\path[fill=green,opacity=0.6] (16,6)--(16,4)--(20,6)--(16,6);
\path[fill=red,opacity=0.6] (16,4)--(20,4)--(20,6)--(16,4);
\path[fill=green,opacity=0.6] (22,6)--(22,4)--(26,6)--(26,6);
\path[fill=blue,opacity=0.6] (22,4)--(26,4)--(26,6)--(22,4);
\path[fill=red,opacity=0.6] (26,6)--(26,2)--(28,2)--(26,6);
\path[fill=green,opacity=0.6] (28,6)--(26,6)--(28,2)--(28,6);
\path[fill=red,opacity=0.6] (28,6)--(28,2)--(30,2)--(28,6);
\path[fill=green,opacity=0.6] (12,0)--(12,4)--(10,4)--(12,0);
\path[fill=blue,opacity=0.6] (12,0)--(14,0)--(12,4)--(12,0);
\path[fill=red,opacity=0.6] (14,0)--(14,4)--(12,4)--(14,0);
\path[fill=green,opacity=0.6] (14,4)--(18,4)--(14,2)--(14,4);
\path[fill=red,opacity=0.6] (14,2)--(18,2)--(18,4)--(14,2);
\path[fill=green,opacity=0.6] (18,4)--(22,4)--(18,2)--(18,4);
\path[fill=blue,opacity=0.6] (18,2)--(22,2)--(22,4)--(18,2);
\path[fill=green,opacity=0.6] (22,4)--(26,4)--(22,2)--(22,4);
\path[fill=blue,opacity=0.6] (22,2)--(26,2)--(26,4)--(22,2);
\path[fill=green,opacity=0.6] (14,2)--(18,2)--(14,0)--(14,2);
\path[fill=blue,opacity=0.6] (14,0)--(18,0)--(18,2)--(14,0);
\path[fill=red,opacity=0.6] (18,2)--(20,2)--(20,-2)--(18,2);
\path[fill=green,opacity=0.6] (20,2)--(24,2)--(20,0)--(20,2);
\path[fill=blue,opacity=0.6] (20,0)--(24,0)--(24,2)--(20,0);
\path[fill=green,opacity=0.6] (24,2)--(28,2)--(24,0)--(24,2);
\path[fill=red,opacity=0.6] (24,0)--(28,0)--(28,2)--(24,0);
\path[fill=green,opacity=0.6] (14,0)--(18,0)--(14,-2)--(14,0);
\path[fill=green,opacity=0.6] (20,0)--(24,0)--(20,-2)--(20,0);
\path[fill=green,opacity=0.6] (24,0)--(28,0)--(24,-2)--(24,0);

\draw[line width=0.5mm] (10,-2) -- (30,-2);
\draw[line width=0.5mm] (10,-2) -- (10,8);
\draw[line width=0.5mm] (10,8) -- (30,8);
\draw[line width=0.5mm] (30,8) -- (30,-2);
\draw[gray, thick] (10,-2) -- (30,8);
\draw[gray, thick] (10,0) -- (18,0);
\draw[gray, thick] (10,4) -- (26,4);
\draw[gray, thick] (12,0) -- (12,8);
\draw[gray, thick] (14,-2) -- (14,4);
\draw[gray, thick] (16,4) -- (16,8);
\draw[gray, thick] (18,-2) -- (18,4);
\draw[gray, thick] (20,-2) -- (20,2);
\draw[gray, thick] (20,4) -- (20,8);
\draw[gray, thick] (22,2) -- (22,8);
\draw[gray, thick] (24,-2) -- (24,2);
\draw[gray, thick] (26,2) -- (26,8);
\draw[gray, thick] (28,-2) -- (28,6);
\draw[gray, thick] (20,0) -- (28,0);
\draw[gray, thick] (14,2) -- (30,2);
\draw[gray, thick] (12,6) -- (20,6);
\draw[gray, thick] (22,6) -- (30,6);
\draw[gray, thick] (14,-2) -- (18,0);
\draw[gray, thick] (20,-2) -- (28,2);
\draw[gray, thick] (24,-2) -- (28,0);
\draw[gray, thick] (20,-2) -- (18,2);
\draw[gray, thick] (30,-2) -- (26,6);
\draw[gray, thick] (30,2) -- (28,6);
\draw[gray, thick] (20,0) -- (24,2);
\draw[gray, thick] (22,2) -- (26,4);
\draw[gray, thick] (10,8) -- (14,0);
\draw[gray, thick] (10,4) -- (12,0);
\draw[gray, thick] (12,4) -- (20,8);
\draw[gray, thick] (12,6) -- (16,8);
\draw[gray, thick] (16,4) -- (20,6);
\draw[gray, thick] (20,8) -- (22,4);
\draw[gray, thick] (22,6) -- (26,8);
\draw[gray, thick] (14,2) -- (18,4);
\end{tikzpicture}
\]

Each edge of these rectangles is made up of one or two hypotenuses of the level one supertile. 
We will split the hypotenuses on the boundaries of the tiles into two types:

\begin{itemize}
    \item Type I: if the hypotenuse is on the bottom or left edge of the rectangle.
    \item Type II: if the hypotenuse is on the top or right edge of the rectangle.
\end{itemize}

Note that going counterclockwise around the boundary, each Type I hypotenuse is made up in order exactly of a short edge of a \textbf{\color{green} green} triangle, a middle edge of a  \textbf{\color{red} red} triangle and a middle edge of a  \textbf{\color{blue}blue} triangle
while, each Type II hypotenuse is made up in order exactly of a short edge of a \textbf{\color{red} red} triangle, a middle edge of a  \textbf{\color{green} green} triangle and a middle edge of a  \textbf{\color{blue}blue} triangle.

Next, since the hypotenuses of the tiles must coincide with hypotenuses of other tiles, the same holds for the hypotenuses of the level 1 super-tiles. This is the same fact we used in the proof of Theorem~\ref{thm:RPvertices}.

This means that every hypotenuse which is on the left or bottom edge of a rectangle (Type I) coincides with a hypotenuse on the right or top of a neighbouring rectangle (Type II) and  every hypotenuse which is on the right or top edge of a rectangle (Type II) coincides with a hypotenuse on the left or bottom of a neighbouring rectangle (Type I). 

Therefore, we can only get conflicts along horizontal or vertical boundaries of Type I and Type II hypotenuses. In these two cases we get the following two scenarios, and there is clearly no conflict.

\[
\centering
\begin{tikzpicture}[scale=.4]
\path[fill=red,opacity=0.6] (-2,0) --(0,0)--(0,-4)--(-2,0);
\path[fill=green,opacity=0.6] (0,0) --(4,0)--(0,-2)--(0,0);
\path[fill=blue,opacity=0.6] (4,0) --(8,0)--(8,-2)--(4,0);
 \draw [decorate,decoration={brace,mirror,amplitude=4pt},xshift=0pt,yshift=-.2cm]
      (-2,-4) -- (8,-4) node [midway,yshift=-.3cm] {\textbf{Type II}};  
\path[fill=blue,opacity=0.6] (-2,0) --(2,0)--(2,2)--(-2,0);
\path[fill=red,opacity=0.6] (2,0) --(6,0)--(6,2)--(2,0);
\path[fill=green,opacity=0.6] (6,0) --(6,4)--(8,0)--(6,0);
   \draw [decorate,decoration={brace,amplitude=4pt},xshift=0pt,yshift=.2cm]
      (-2,4) -- (8,4) node [midway,yshift=.3cm] {\textbf{Type I}};
\draw[line width=0.5mm] (-2,0) -- (8,0); 
\draw[gray, thick] (8,0) -- (6,4);
\draw[gray, thick] (6,0) -- (6,4);
\draw[gray, thick] (6,2) -- (2,0);
\draw[gray, thick] (2,2) -- (2,0);
\draw[gray, thick] (2,2) -- (-2,0);
\draw[gray, thick] (-2,0) -- (0,-4);
\draw[gray, thick] (0,0) -- (0,-4);
\draw[gray, thick] (4,0) -- (0,-2);
\draw[gray, thick] (4,0) -- (8,-2);
\draw[gray, thick] (8,0) -- (8,-2);
\path[fill=green,opacity=0.6] (18,-5) --(18,-3)--(22,-3)--(18,-5);
\path[fill=red,opacity=0.6] (18,-3) --(18,1)--(20,-3)--(18,-3);
\path[fill=blue,opacity=0.6] (18,1) --(18,5)--(20,1)--(18,1);
  \draw [decorate,decoration={brace,mirror,amplitude=4pt},xshift=0.5cm,yshift=0pt]
      (22,-5) -- (22,5) node [midway,right,xshift=.1cm] {\textbf{Type I}};
\path[fill=blue,opacity=0.6] (18,-5) --(18,-1)--(16,-1)--(18,-5);
\path[fill=green,opacity=0.6] (18,-1) --(18,3)--(16,3)--(18,-1);
\path[fill=red,opacity=0.6] (18,3) --(18,5)--(14,3)--(18,3);
 \draw [decorate,decoration={brace,amplitude=4pt},xshift=-0.5cm,yshift=0pt]
      (14,-5) -- (14,5) node [midway,left,xshift=-.1cm] {\textbf{Type II}};
\draw[line width=0.5mm] (18,-5) -- (18,5);
\draw[gray, thick] (18,-5) -- (22,-3);
\draw[gray, thick] (18,-3) -- (22,-3);
\draw[gray, thick] (20,-3) -- (18,1);
\draw[gray, thick] (20,1) -- (18,1);
\draw[gray, thick] (20,1) -- (18,5);
\draw[gray, thick] (14,3) -- (18,5);
\draw[gray, thick] (14,3) -- (18,3);
\draw[gray, thick] (16,3) -- (18,-1);
\draw[gray, thick] (16,-1) -- (18,-1);
\draw[gray, thick] (16,-1) -- (18,-5);
\end{tikzpicture}
\]
\end{proof}

\section{Pinwheel Tiling}

The Pinwheel substitution is the single tile substitution:

\[
\centering
\begin{tikzpicture}[scale=.3]
\draw[gray, thick] (0,0) -- (2,0);
\draw[gray, thick]  (0,0) -- (0,4);
\draw[gray, thick]  (0,4) -- (2,0);
\draw[->,line width=0.5mm]  (4,2)-- (6,2);
\end{tikzpicture}
\begin{tikzpicture}[scale=.4, rotate=26.6]
\draw[gray, thick] (12,0) -- (12,10);
\draw[gray, thick] (12,10) -- (8,2);
\draw[gray, thick] (8,2) -- (12,0);
\draw[gray, thick]  (12,2) -- (8,2);
\draw[gray, thick]  (12,6) -- (10,6);
\draw[gray, thick]  (12,2) -- (10,6);
\draw[gray, thick]  (10,6) -- (10,2);
\end{tikzpicture}
\]
For more details about this substitution we recommend \cite{TAO,TE,Rad}.

\smallskip

Let P denote the Pinwheel tiling of the plane. The arguments here are similar to the rational pinwheel, but are a bit more complicated.

\begin{lemma}\label{lem:pinwheel}
The tiles of P combine into the following three tiles and their rotations:
\[
\centering
\begin{tikzpicture}[scale=.4]
\draw[line width=0.5mm] (0,0) -- (4.47,0);
\draw[line width=0.5mm] (0,0) -- (0,8.94);
\draw[line width=0.5mm] (4.47,0) -- (0,8.94);
\draw[line width=0.5mm] (4.47,8.94) -- (4.47,0);
\draw[line width=0.5mm] (4.47,8.94) --  (0,8.94);
\draw[gray, thick] (0,0) -- (3.58,1.79);
\draw[gray, thick] (0,4.47) -- (1.79,5.37);
\draw[gray, thick] (0,4.47) -- (3.58,1.79);
\draw[gray, thick] (0,4.47) -- (1.79,0.89);
\draw[gray, thick] (4.47,8.94) -- (.89,7.16);
\draw[gray, thick] (4.47,4.47) --  (2.68,3.58);
\draw[gray, thick] (4.47,4.47) --  (.89,7.16);
\draw[gray, thick] (4.47,4.47) --  (2.68,8.05);
\draw (0,0) node {\textbullet};
\draw (4.47,0) node {\textbullet};
\draw  (0,8.94) node {\textbullet};
\draw (0,4.47) node {\textbullet};
\draw  (4.47,8.94)node {\textbullet};
\draw  (4.47,4.47)node {\textbullet};
\end{tikzpicture}%
\hspace*{10pt}%
\begin{tikzpicture}[scale=.4]
\draw[line width=0.5mm] (0,0) -- (4.47,0);
\draw[line width=0.5mm] (0,0) -- (0,8.94);
\draw[line width=0.5mm] (0,0) -- (4.47,8.94);
\draw[line width=0.5mm] (4.47,8.94) -- (4.47,0);
\draw[line width=0.5mm] (4.47,8.94) --  (0,8.94);
\draw[gray, thick] (4.47,0) -- (.89,1.79);
\draw[gray, thick] (4.47,4.47) -- (.89,1.79);
\draw[gray, thick] (0,8.94) -- (3.58,7.16);
\draw[gray, thick] (4.47,4.47) -- (2.68,5.37);
\draw[gray, thick] (4.47,4.47) -- (2.68,0.89);
\draw[gray, thick] (0,4.47) --  (1.79,3.58);
\draw[gray, thick] (0,4.47) --  (3.58,7.16);
\draw[gray, thick] (0,4.47) --  (1.79,8.05);
\draw (0,0) node {\textbullet};
\draw (4.47,0) node {\textbullet};
\draw  (0,8.94) node {\textbullet};
\draw (0,4.47) node {\textbullet};
\draw  (4.47,8.94)node {\textbullet};
\draw  (4.47,4.47)node {\textbullet};
\end{tikzpicture}%
\hspace*{10pt}%
\begin{tikzpicture}[scale=.4]
\draw[line width=0.5mm] (0,0) -- (4.47,0);
\draw[line width=0.5mm] (0,0) -- (0,8.94);
\draw[line width=0.5mm] (4.47,0) -- (0,8.94);
\draw[line width=0.5mm] (7.16,3.58) -- (4.47,0);
\draw[line width=0.5mm] (7.16,3.58) --  (0,8.94);
\draw[gray, thick] (0,0) -- (7.16,3.58);
\draw[gray, thick] (0,4.47) -- (3.58,6.26);
\draw[gray, thick] (0,4.47) -- (3.58,1.79);
\draw[gray, thick] (0,4.47) -- (1.79,0.89);
\draw[gray, thick] (3.58,1.79) -- (3.58,6.26);
\draw[gray, thick] (5.37,2.68) -- (3.58,6.26);
\draw (0,0) node {\textbullet};
\draw (4.47,0) node {\textbullet};
\draw  (0,8.94) node {\textbullet};
\draw (0,4.47) node {\textbullet};
\draw  (7.16,3.58)node {\textbullet};
\draw  (3.58,6.26) node {\textbullet};
\end{tikzpicture}
\]
Furthermore, these tiles connect to each other only at the nodes indicated.
\end{lemma}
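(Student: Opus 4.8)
The plan is to imitate the proof of Lemma~\ref{lem:RP}, the extra difficulty being that the pinwheel substitution rotates by an irrational angle, so there is no finite list of tile orientations to work with directly; instead I will induct on the substitution level and use the incommensurability of $\sqrt 5$ with the leg lengths, exactly as in the proof of Theorem~\ref{thm:RPvertices}. First I would analyse a single application of the substitution: it produces five sub-triangles, and by inspecting the inflation rule one records, for this first-level supertile, which pairs of the five meet along a common hypotenuse in the interior, which hypotenuses lie on the two legs that form the boundary of the supertile, and where the sub-triangle vertices sit on that boundary. One finds that exactly one pair of the five meets along an interior hypotenuse (already forming a small rectangle), while the hypotenuses of the other three all lie on the boundary legs.

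Next I would iterate. An $n$-supertile decomposes into five $(n-1)$-supertiles arranged combinatorially just like the five sub-triangles above, so an induction on $n$ gives the same picture at every level: inside each $n$-supertile one pair of $(n-1)$-supertiles meets along a common hypotenuse and the other three have their hypotenuses on the boundary. Combining this with the irrationality of $\sqrt 5$ (the ratio of a hypotenuse to a leg), a further induction on the level, keeping careful track of which pieces of a supertile's boundary are legs and which are hypotenuses at the relevant scale and how they are covered from the inside by the legs and hypotenuses of the constituents, shows that along any boundary shared by two supertiles the constituent tiles must line up hypotenuse-to-hypotenuse, never partially and never against a leg. Hence in the whole tiling each first-level supertile's hypotenuse is glued to exactly one other first-level supertile's hypotenuse.

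Finally, two congruent right triangles sharing a hypotenuse can be fitted together in only two ways, namely the $180^\circ$ rotation of one about the midpoint of the hypotenuse (giving a rectangle) or the reflection across the hypotenuse (giving a kite), so pairs of first-level supertiles produce, once decorated with the internal subdivision, precisely the three pictured composite tiles up to rotation; finite local complexity guarantees that only finitely many relative positions of a first-level supertile and its hypotenuse-neighbour occur, and enumerating them inside a sufficiently large supertile confirms the list is complete. The node set follows from the same case analysis: a boundary point of one composite tile can touch another composite tile only at an image of a supertile corner, and these are exactly the marked nodes. The main obstacle is the hypotenuse-to-hypotenuse matching in the middle step: since the pinwheel, unlike the rational pinwheel, is not edge-to-edge, one cannot merely invoke ``$\sqrt 5$ is irrational'' but must instead run the bookkeeping of legs versus hypotenuses along shared boundaries carefully through the induction; the first and last steps are finite checks on the inflation rule.
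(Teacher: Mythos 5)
Your proposal follows essentially the same route as the paper's (very terse) proof: establish that level-one supertiles can only meet hypotenuse-to-hypotenuse by the irrationality/induction argument of Lemma~\ref{lem:RP} and Theorem~\ref{thm:RPvertices}, observe that two congruent right triangles glued along a hypotenuse yield only the rectangle (point reflection) or kite (reflection) configurations, and note that the composite boundaries consist entirely of matched hypotenuses so that contacts occur only at the marked nodes. Your write-up is in fact more careful than the paper's two-sentence proof, correctly flagging the leg-versus-hypotenuse bookkeeping through the levels as the real technical content that the paper elides.
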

\begin{proof}
Similar to the arguments used in Lemma~\ref{lem:RP} and Theorem~\ref{thm:RPvertices} the level one supertile can only combine hypotenuse to hypotenuse. Therefore, these three ways are the the only possibilities up to rotation.

The last fact is straightforward since the boundaries of the three supertiles are entirely comprised of hypotenuses. Therefore, they must meet each other hypotenuse to hypotenuse and so the tiles connect to each other only at the nodes indicated.
\end{proof}

We can now calculate the chromatic number.

\begin{theorem}\label{thm:Pchromaticnumber}
The pinwheel tiling has chromatic number $\chi(P)=3$.
\end{theorem}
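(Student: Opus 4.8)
The plan is to follow the proof of Theorem~\ref{thm:RPvertices} almost verbatim, with Lemma~\ref{lem:pinwheel} playing the role of Lemma~\ref{lem:RP}.

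For the lower bound, note that the pinwheel prototile is a triangle, so every tile of $P$ is a face with an odd number of edges; by Corollary~\ref{cor:bip} the graph $P$ is not bipartite, hence $\chi(P)\ge 3$. Since $\chi(P)\le 4$ by the $4$-colour theorem, it remains only to produce an explicit proper $3$-colouring of the vertices.

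For the upper bound, I would work with the coarser tiling of the plane by the three composite tiles of Lemma~\ref{lem:pinwheel} and their rotations. Because the boundary of each composite tile is a union of hypotenuses of level-one supertiles, and hypotenuses can only be matched to hypotenuses, two composite tiles always abut hypotenuse-to-hypotenuse and, by the last clause of Lemma~\ref{lem:pinwheel}, meet only at the indicated nodes. Let $H$ be the planar graph whose vertices are these nodes and whose edges are the boundary arcs joining consecutive nodes; each composite tile carries exactly six boundary nodes (the four vertices of its outer quadrilateral together with two edge-midpoints), so every bounded face of $H$ is a hexagon and $H$ is bipartite by Corollary~\ref{cor:bip}. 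Fix a $2$-colouring of the nodes with red and blue. Around each composite tile the six node-colours then alternate, so each composite tile receives one of exactly two admissible node-colourings; for each composite tile and each of these, I would exhibit (in accompanying figures, as in Theorem~\ref{thm:RPvertices}) a proper $3$-colouring of the interior vertices using green, arranged so that the colour of every $P$-vertex lying on a boundary arc is determined solely by the two endpoint nodes of that arc. Since every edge of $P$ lies either inside a single composite tile or along such an arc, and two composite tiles sharing an arc assign its vertices the same (standardized) colours, assembling the composite tiles as dictated by Lemma~\ref{lem:pinwheel} yields a globally consistent proper $3$-colouring, whence $\chi(P)=3$.

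The main obstacle is the finite but delicate verification underlying the previous paragraph: one must confirm that $H$ genuinely has only hexagonal bounded faces --- equivalently, that consecutive nodes on a composite-tile boundary are joined by a single arc shared with exactly one neighbour, which rests on the node-matching statement of Lemma~\ref{lem:pinwheel} --- and then, for each of the three composite tiles and each of its two node-colourings, display an interior $3$-colouring and check by inspection both that it is proper inside the tile and that the vertex colours along each boundary arc depend only on the arc's endpoints. As in the proof for the Ammann--Beenker tiling, any a priori troublesome local adjacency is eliminated by observing that the offending relative position of composite tiles does not occur under the substitution, which keeps the case analysis finite; the continuum of orientations present in the pinwheel (unlike the four of the rational pinwheel) is harmless, since every step above is rotation-equivariant.
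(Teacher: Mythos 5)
Your proposal matches the paper's proof essentially step for step: the lower bound comes from the presence of triangular faces (non-bipartiteness), and the upper bound comes from 2-colouring the hexagonal boundary subgraph of the Lemma~\ref{lem:pinwheel} decomposition with two colours and then filling in the interiors of the finitely many composite tiles (up to rotation and up to the two alternating boundary patterns) with a third colour, exactly as the paper does with its explicit figures. The only cosmetic difference is that you allow for extra $P$-vertices on the boundary arcs between consecutive nodes; in fact each such arc is a single hypotenuse-edge of $P$ with no interior vertices, so that precaution is vacuous.
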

\begin{proof}
An explicit 3-colouring of the pinwheel follows similarly to that of the rational pinwheel in Theorem~\ref{thm:RPvertices}. 

By Lemma~\ref{lem:pinwheel} we know that the pinwheel can be tiled by these three tiles and their rotations. Notice that their boundaries are 6-cycles and so the boundary subgraph of the pinwheel tiling is bipartite and can be 2-coloured with red and blue. All we then need to show is that we can colour the insides of these tiles with no conflicts:
\[
\centering
\begin{tikzpicture}[scale=.4]
\draw[line width=0.5mm] (0,0) -- (4.47,0);
\draw[line width=0.5mm] (0,0) -- (0,8.94);
\draw[gray, thick] (4.47,0) -- (0,8.94);
\draw[line width=0.5mm] (4.47,8.94) -- (4.47,0);
\draw[line width=0.5mm] (4.47,8.94) --  (0,8.94);
\draw[gray, thick] (0,0) -- (3.58,1.79);
\draw[gray, thick] (0,4.47) -- (1.79,5.37);
\draw[gray, thick] (0,4.47) -- (3.58,1.79);
\draw[gray, thick] (0,4.47) -- (1.79,0.89);
\draw[gray, thick] (4.47,8.94) -- (.89,7.16);
\draw[gray, thick] (4.47,4.47) --  (2.68,3.58);
\draw[gray, thick] (4.47,4.47) --  (.89,7.16);
\draw[gray, thick] (4.47,4.47) --  (2.68,8.05);
\draw[color=red]  (0,0) node {\textbullet};
\draw[color=blue] (4.47,0) node {\textbullet};
\draw [color=red]  (0,8.94) node {\textbullet};
\draw[color=blue] (0,4.47) node {\textbullet};
\draw [color=blue] (4.47,8.94)node {\textbullet};
\draw  [color=red] (4.47,4.47)node {\textbullet};
\draw[color=red] (3.58,1.79) node {\textbullet};
\draw[color=green] (1.79,0.89) node {\textbullet};
\draw [color=red] (1.79,5.37) node {\textbullet};
\draw[color=blue]  (.89,7.16)node {\textbullet};
\draw [color=blue] (2.68,3.58)node {\textbullet};
\draw [color=green] (2.68,8.05)node {\textbullet};
\end{tikzpicture}%
\begin{tikzpicture}[scale=.4]
\draw[line width=0.5mm] (0,0) -- (4.47,0);
\draw[line width=0.5mm] (0,0) -- (0,8.94);
\draw[gray, thick] (0,0) -- (4.47,8.94);
\draw[line width=0.5mm] (4.47,8.94) -- (4.47,0);
\draw[line width=0.5mm] (4.47,8.94) --  (0,8.94);
\draw[gray, thick] (4.47,0) -- (.89,1.79);
\draw[gray, thick] (4.47,4.47) -- (.89,1.79);
\draw[gray, thick] (0,8.94) -- (3.58,7.16);
\draw[gray, thick] (4.47,4.47) -- (2.68,5.37);
\draw[gray, thick] (4.47,4.47) -- (2.68,0.89);
\draw[gray, thick] (0,4.47) --  (1.79,3.58);
\draw[gray, thick] (0,4.47) --  (3.58,7.16);
\draw[gray, thick] (0,4.47) --  (1.79,8.05);
\draw[color=blue] (0,0) node {\textbullet};
\draw[color=red] (4.47,0) node {\textbullet};
\draw[color=blue]  (0,8.94) node {\textbullet};
\draw [color=blue] (3.58,7.16) node {\textbullet};
\draw [color=red](0,4.47) node {\textbullet};
\draw  [color=green](1.79,8.05) node {\textbullet};
\draw[color=blue]  (1.79,3.58) node {\textbullet};
\draw[color=red]  (4.47,8.94)node {\textbullet};
\draw[color=red]  (.89,1.79)node {\textbullet};
\draw[color=blue]  (4.47,4.47)node {\textbullet};
\draw[color=red]  (2.68,5.37)node {\textbullet};
\draw  [color=green] (2.68,.89)node {\textbullet};
\end{tikzpicture}%
\begin{tikzpicture}[scale=.4]
\draw[line width=0.5mm] (0,0) -- (4.47,0);
\draw[line width=0.5mm] (0,0) -- (0,8.94);
\draw[gray, thick] (4.47,0) -- (0,8.94);
\draw[line width=0.5mm] (7.16,3.58) -- (4.47,0);
\draw[line width=0.5mm] (7.16,3.58) --  (0,8.94);
\draw[gray, thick] (0,0) -- (7.16,3.58);
\draw[gray, thick] (0,4.47) -- (3.58,6.26);
\draw[gray, thick] (0,4.47) -- (3.58,1.79);
\draw[gray, thick] (0,4.47) -- (1.79,0.89);
\draw[gray, thick] (3.58,1.79) -- (3.58,6.26);
\draw[gray, thick] (5.37,2.68) -- (3.58,6.26);
\draw[color=red] (0,0) node {\textbullet};
\draw[color=blue] (4.47,0) node {\textbullet};
\draw [color=red]  (0,8.94) node {\textbullet};
\draw[color=blue] (0,4.47) node {\textbullet};
\draw [color=red]  (7.16,3.58)node {\textbullet};
\draw [color=blue] (3.58,6.26) node {\textbullet};
\draw [color=blue](3.58,1.79) node {\textbullet};
\draw[color=green] (1.79,0.89) node {\textbullet};
\draw  [color=green](1.79,5.37) node {\textbullet};
\draw [color=red]  (3.58,1.79)node {\textbullet};
\draw[color=green] (5.37,2.68)node {\textbullet};
\end{tikzpicture}
\begin{tikzpicture}[scale=.4]
\draw[line width=0.5mm] (0,0) -- (4.47,0);
\draw[line width=0.5mm] (0,0) -- (0,8.94);
\draw[gray, thick] (4.47,0) -- (0,8.94);
\draw[line width=0.5mm] (7.16,3.58) -- (4.47,0);
\draw[line width=0.5mm] (7.16,3.58) --  (0,8.94);
\draw[gray, thick] (0,0) -- (7.16,3.58);
\draw[gray, thick] (0,4.47) -- (3.58,6.26);
\draw[gray, thick] (0,4.47) -- (3.58,1.79);
\draw[gray, thick] (0,4.47) -- (1.79,0.89);
\draw[gray, thick] (3.58,1.79) -- (3.58,6.26);
\draw[gray, thick] (5.37,2.68) -- (3.58,6.26);
\draw[color=blue] (0,0) node {\textbullet};
\draw [color=red] (4.47,0) node {\textbullet};
\draw[color=blue]  (0,8.94) node {\textbullet};
\draw[color=blue] [color=red] (0,4.47) node {\textbullet};
\draw[color=blue]  (7.16,3.58)node {\textbullet};
\draw[color=red]   (3.58,6.26) node {\textbullet};
\draw[color=blue] (3.58,1.79) node {\textbullet};
\draw[color=green] (1.79,5.37) node {\textbullet};
\draw [color=green] (1.79,0.89)node {\textbullet};
\draw[color=blue] (3.58,1.79) node {\textbullet};
\draw [color=green] (5.37,2.68)node {\textbullet};
\end{tikzpicture}
\]
Notice that this accounts for all possible red/blue alternating boundary patterns since we can rotate the tiles.

Since $P$ contains $3$-cycles, it is not bipartite. This proves $\chi(P)=3$.
\end{proof}

Distinct from the previous three sections we cannot use a directional edge colouring since the pinwheel tiling irrationally rotates the tiles. We find the chromatic index via a different method.

\begin{lemma}\label{lem:Pindex}
The Pinwheel Tiling has $\Delta = 8$.
\end{lemma}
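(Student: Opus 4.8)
The plan is to establish the two inequalities $\Delta\le 8$ and $\Delta\ge 8$ separately, the first being the substantive one. Throughout, write $\alpha=\arctan\tfrac12$ and $\beta=\arctan 2$ for the two acute angles of the $1$-$2$-$\sqrt5$ prototile, so that $\alpha+\beta=\tfrac\pi2$ and the only angles occurring inside a tile are $\alpha,\beta,\tfrac\pi2$. The sole external input is the classical fact that $\alpha$ is an irrational multiple of $\pi$ — a consequence of Niven's theorem, since $\tan\alpha=\tfrac12\notin\{0,\pm1\}$ — which is precisely why the pinwheel presents its tiles in infinitely many orientations.

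For the upper bound, fix a vertex $v$ of $P$. First I would record the elementary observation that $\deg(v)$ equals the number of tiles incident to $v$: the graph-edges at $v$ are exactly the rays issuing from $v$ along tile edges, any two angularly consecutive such rays bound a single tile (convexity of the triangles, together with the fact that a tile interior contains no edge), and each incident tile occupies exactly one of these angular sectors; hence the number of rays, of sectors, and of incident tiles coincide. Next, each incident tile $T$ contributes at $v$ an angle lying in $\{\alpha,\beta,\tfrac\pi2,\pi\}$ — one of the three tile-angles if $v$ is a corner of $T$, and $\pi$ if $v$ lies in the relative interior of an edge of $T$ (this "T-junction" case does occur in the pinwheel, so it must be allowed). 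Letting $a,b,c,d$ count the incident tiles contributing $\alpha,\beta,\tfrac\pi2,\pi$ respectively, we have $a\alpha+b\beta+c\tfrac\pi2+d\pi=2\pi$, and substituting $\beta=\tfrac\pi2-\alpha$ gives
\[
(a-b)\,\alpha+(b+c+2d)\,\tfrac\pi2=2\pi .
\]
Since $1$ and $\alpha/\pi$ are linearly independent over $\mathbb Q$, the coefficient of $\alpha$ must vanish, so $a=b$ and $b+c+2d=4$. Therefore $\deg(v)=a+b+c+d=2b+c+d=b+4-d\le b+4\le 8$, the last step using $b\le b+2d\le 4$.

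For the lower bound it suffices to exhibit a single vertex of degree $8$; by the computation above such a vertex must be surrounded by exactly eight triangles, four with angle $\alpha$ and four with angle $\beta$, with no T-junction. I would display such a configuration inside a level-$n$ supertile obtained by iterating the pinwheel substitution a few times on a single prototile (the substitution is primitive, so every patch appearing in a supertile appears in the tiling), or, alternatively, locate it among the combined tiles of Lemma~\ref{lem:pinwheel} at one of the indicated nodes. Together with the upper bound this yields $\Delta=8$, which then feeds into the chromatic-index computation through Theorem~\ref{thm:greaterthan7}.

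The main obstacle is the bookkeeping in the upper bound: being scrupulous that $\deg(v)$ is genuinely the number of incident tiles even when some tiles meet $v$ along the interior of an edge, and correctly extracting the $\mathbb Q$-linear independence of $1$ and $\alpha/\pi$ that annihilates the $\alpha$-term. By contrast the lower bound is routine — once the substitution is written out explicitly, exhibiting the degree-$8$ vertex is a finite inspection.
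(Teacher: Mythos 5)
Your proof takes essentially the same route as the paper: the upper bound is the same angle count around a vertex (equal numbers of $\alpha$'s and $\beta$'s forced by the irrationality of $\alpha/\pi$, hence at most $4+4$ incident tiles), and you actually write it out more carefully than the paper does, since you also track the right angles and the T-junction tiles contributing an angle of $\pi$. The one thing left undone is the lower bound: the paper exhibits a concrete degree-$8$ vertex (the centre of the patch obtained by substituting the first-level supertile once more, where the configuration meets its mirror image), whereas you only promise that a finite inspection will produce one; that inspection should be carried out explicitly, but it is routine and the claim is correct.
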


\begin{proof}
The two irrational angles of a proto-tile sum to 180 degrees. Since one of these is the smallest interior angle of the triangle, the largest multiple of this angle is 4 since it must pair with 4 of the larger irrational angles to obtain a 360 degree angle sum around any vertex. Hence, the index cannot be more than 8.

To show that the index is exactly 8 notice that if we replace the isoceles triangle in the first level supertile
\[\begin{tikzpicture}[scale=.4, rotate=26.6]
\draw[gray, thick] (12,0) -- (12,10);
\draw[gray, thick] (12,10) -- (8,2);
\draw[gray, thick] (8,2) -- (12,0);
\draw[gray, thick]  (12,2) -- (8,2);
\draw[gray, thick]  (12,6) -- (10,6);
\draw[gray, thick]  (12,2) -- (10,6);
\draw[gray, thick]  (10,6) -- (10,2);
\end{tikzpicture}
\]
with first level supertiles we get
\[
\begin{tikzpicture}[scale=.4, xscale=-1, rotate=26.6]
\draw[gray, thick] (12,0) -- (12,10);
\draw[gray, thick] (12,10) -- (8,2);
\draw[gray, thick] (8,2) -- (12,0);
\draw[gray, thick]  (12,2) -- (8,2);
\draw[gray, thick]  (12,6) -- (10,6);
\draw[gray, thick]  (12,2) -- (10,6);
\draw[gray, thick]  (10,6) -- (10,2);
\end{tikzpicture}
\begin{tikzpicture}[scale=.4,  rotate=26.6]
\draw[gray, thick] (12,0) -- (12,10);
\draw[gray, thick] (12,10) -- (8,2);
\draw[gray, thick] (8,2) -- (12,0);
\draw[gray, thick]  (12,2) -- (8,2);
\draw[gray, thick]  (12,6) -- (10,6);
\draw[gray, thick]  (12,2) -- (10,6);
\draw[gray, thick]  (10,6) -- (10,2);
\end{tikzpicture}
\]
The central vertex has index 8.
\end{proof}

By Theorem~\ref{thm:greaterthan7} we known that the chromatic index of the pinwheel tiling is 8. However, we deduce a lot about the structure of the pinwheel tiling below and construct an explicit edge-colouring algorithm. This is of particular interest since we cannot use the Directional Alternating Colouring Method here.

We split the Pinwheel tiling into two subgraphs with disjoint sets of edges:
\[
P = P_{\sqrt 5}\ \cup \ P_{1,2}
\]
where $P_{\sqrt 5}$ is the set of hypotenuses, or sides with length $\sqrt 5$ and $P_{1,2}$ is the set of edges of length 1 or 2. Following immediately from Lemma~\ref{lem:pinwheel} we get:
\begin{lemma}
The $P_{1,2}$ is tiled by rotations of 
\[
\begin{tikzpicture}[scale=.4]
\draw[gray, thick] (0,0) -- (0,4);
\draw[gray, thick] (0,4) -- (2,4);
\draw[gray, thick] (2,4) -- (2,0);
\draw[gray, thick] (2,0) -- (0,0);
\end{tikzpicture}
\hspace*{10pt}
\begin{tikzpicture}[scale=.4]
\draw[gray, thick] (0,0) -- (0,4);
\draw[gray, thick] (0,4) -- (3.2,1.65);
\draw[gray, thick] (3.2,1.65) -- (2,0);
\draw[gray, thick] (2,0) -- (0,0);
\end{tikzpicture}
\]
and $P_{\sqrt 5}$ is tiled by rotations of 
\[
\begin{tikzpicture}[scale=.4]
\draw[gray, thick] (0,0) -- (4.47,0);
\draw[gray, thick] (0,0) -- (0,8.94);
\draw[gray, thick] (4.47,8.94) -- (4.47,0);
\draw[gray, thick] (4.47,8.94) --  (0,8.94);
\draw[gray, thick] (0,4.47) -- (3.58,1.79);
\draw[gray, thick] (4.47,4.47) --  (.89,7.16);
\end{tikzpicture}
\hspace*{10pt}
\begin{tikzpicture}[scale=.4, xscale=-1]
\draw[gray, thick] (0,0) -- (4.47,0);
\draw[gray, thick] (0,0) -- (0,8.94);
\draw[gray, thick] (4.47,8.94) -- (4.47,0);
\draw[gray, thick] (4.47,8.94) --  (0,8.94);
\draw[gray, thick] (0,4.47) -- (3.58,1.79);
\draw[gray, thick] (4.47,4.47) --  (.89,7.16);
\end{tikzpicture}
\hspace*{10pt}
\begin{tikzpicture}[scale=.4]
\draw[gray, thick] (0,0) -- (4.47,0);
\draw[gray, thick] (0,0) -- (0,8.94);
\draw[gray, thick] (7.16,3.58) -- (4.47,0);
\draw[gray, thick] (7.16,3.58) --  (0,8.94);
\draw[gray, thick] (0,4.47) -- (3.58,1.79);
\draw[gray, thick] (3.58,1.79) -- (3.58,6.26);
\end{tikzpicture}
\]\qed
\end{lemma}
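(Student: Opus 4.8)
The plan is to read both statements off Lemma~\ref{lem:pinwheel} by sorting the edges of $P$ by length. Write $S=P_{1,2}$ for the set of short edges (length $1$ or $2$) and $L=P_{\sqrt 5}$ for the set of hypotenuse edges, so that $S$ and $L$ partition the edge set of $P$. The one arithmetic input needed is the fact already used in Lemma~\ref{lem:RP} and Theorem~\ref{thm:RPvertices}: since $\sqrt 5$ is irrational while the legs have integer length, a hypotenuse cannot be broken into shorter tile edges, so each edge of $L$ is a full edge of $P$ met by exactly two tiles, each along its whole hypotenuse.

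\emph{The claim for $P_{1,2}$.} Delete the edge set $L$ from $P$. Because each $(1,2,\sqrt 5)$ triangle has a single hypotenuse, removing $L$ merges every tile with exactly one neighbour, its hypotenuse partner, so the faces of $P_{1,2}$ are precisely the pairs of tiles that share a hypotenuse. It then remains to note the elementary fact that two congruent $(1,2,\sqrt 5)$ right triangles glued along their common hypotenuse can be assembled in only two ways: reflecting one across the line of the hypotenuse produces the displayed quadrilateral (a kite with consecutive sides $2,2,1,1$, convex since the foot of the altitude from the right angle lies strictly inside the hypotenuse), while rotating one by $180^\circ$ about the midpoint of the hypotenuse makes that segment a diagonal subtending two right angles and so produces the displayed $1\times 2$ rectangle. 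Both cases are realised in $P$, and a reflected copy of either shape is again a rotated copy of it, so $P_{1,2}$ is tiled by rotations of the rectangle and the quadrilateral.

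\emph{The claim for $P_{\sqrt 5}$.} Here I would use Lemma~\ref{lem:pinwheel} in full: $P$ is tiled by rotated copies of the three blocks $B_1,B_2,B_3$, which meet only at the indicated nodes, so that the boundary of each block is a union of hypotenuse edges of $P$. Consequently, removing the short edges $S$ from $P$ can never merge tiles lying in two different blocks, while inside a single block the short edges connect all of its tiles; hence removing $S$ collapses each block to a single face and leaves exactly the hypotenuse edges lying on or inside that block. By inspection of the three figures in Lemma~\ref{lem:pinwheel}, this surviving hypotenuse pattern is a rectangle carrying a pair of interior hypotenuse chords for each of the two rectangular blocks --- the two being mirror images of one another --- and the third displayed shape for the kite-shaped block. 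Since $B_1,B_2,B_3$ already tile the plane, so do these three shapes, which is the assertion.

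What I expect to be the main obstacle is the purely finite bookkeeping hidden in the two inspections: for $P_{1,2}$, confirming that the two gluings above are exactly the rectangle and the quadrilateral and that both occur; and for $P_{\sqrt 5}$, confirming that every block boundary is built solely from hypotenuse edges, that the short edges inside each block connect all of its tiles, and that erasing those short edges in $B_1,B_2,B_3$ returns precisely the three shapes drawn. None of this is deep, but it is where the content of the lemma actually lies; everything else is the formal reduction sketched above.
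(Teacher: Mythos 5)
Your argument is correct and follows the same route the paper intends: the paper offers no written proof at all (the lemma is stated with a ``follows immediately from Lemma~\ref{lem:pinwheel}'' and a \qed), and your write-up is exactly the elaboration of that implication --- the hypotenuse-to-hypotenuse matching forces the tiles of $P$ to pair into the rectangle and the kite, giving the $P_{1,2}$ claim, while the block decomposition of Lemma~\ref{lem:pinwheel} (whose block boundaries consist entirely of tile hypotenuses, the glued supertile hypotenuses in each block interior being made of short edges) gives the three hypotenuse patterns for $P_{\sqrt 5}$. The only phrase I would adjust is ``collapses each block to a single face,'' since the interior hypotenuse chords survive inside each block; but this does not affect the conclusion, which is about the decomposition of the edge set rather than the face structure.
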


Using the same argument from Lemma~\ref{lem:Pindex} we see that $P_{1,2}$ and $P_{\sqrt 5}$ both have index 4. The goal is to four-colour each of these graphs.

Label the interior angles of the original $1,2,\sqrt 5$ triangle as 90, $\alpha, \beta$, noting that the latter two angles are irrational. 

Suppose $e$ and $f$ are two edges in $P_{1,2}$ (or equivalently $P_{\sqrt 5}$) connected at a vertex. If we move from $e$ to $f$ the possible right-hand-side angle (or equivalently left-hand-side angle) is 
\[0,\ 90,\ 180,\ 270,\ 2\alpha,\ 2\beta,\ 4\alpha,\ 4\beta,\ 2\alpha+90,\ 2\beta+90,\ 2\alpha+180,\ 2\beta+180
\]
since $2\alpha + 2\beta = 180$ and are irrational angles.
We call the move from $e$ to $f$ a \textbf{turn} if the angle is \[
90,\ 270,\ 2\alpha,\ 2\beta,\ 2\alpha+180,\ 2\beta+180\]

Define a relation on the edges of $P_{1,2}$, or equivalently $P_{\sqrt 5}$, by the following: if $e,f$ are edges of $P_{1,2}$ then $e\sim f$ if and only if there is a path in $P_{1,2}$ with the first edge $e$, the last edge $f$ and the number of turns is even.

\begin{lemma}
The relation on the edges of $P_{1,2}$, or $P_{\sqrt 5}$ respectively, is an equivalence relation. Moreover, there are exactly two cosets.
\end{lemma}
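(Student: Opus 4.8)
The plan is to verify the three relation axioms by elementary walk surgery and then to deduce the ``exactly two classes'' statement from a $\mathbb Z/2$-valued edge invariant that changes across turns. I treat $P_{1,2}$; the case of $P_{\sqrt 5}$ is identical, with the three $P_{\sqrt 5}$-tiles of the previous lemma in place of the two $P_{1,2}$-tiles.

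First I would check the axioms, working with walks (repeated edges allowed). Reflexivity is witnessed by the one-edge walk, which has no turns. For symmetry, reverse a walk $e=e_0,\dots,e_n=f$: a transition $e_{i-1}\to e_i$ with right-hand angle $\theta$ becomes, in the reversed walk, the transition $e_i\to e_{i-1}$ with right-hand angle $360^\circ-\theta$, since reversing the direction of travel swaps the two sectors at the vertex. Because $2\alpha+2\beta=180^\circ$, the involution $\theta\mapsto 360^\circ-\theta$ permutes the six turn angles ($90\leftrightarrow 270$, $2\alpha\leftrightarrow 2\beta+180$, $2\beta\leftrightarrow 2\alpha+180$) and likewise the six non-turn angles, so the reversed walk has the same number of turns. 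For transitivity, concatenate a walk ending in $f$ with a walk beginning with $f$: since $f$ is a terminal edge of each, no transition is created at the seam, so the turn counts add, and even $+$ even $=$ even.

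For the count of classes it suffices to produce a function $c\colon E(P_{1,2})\to\{0,1\}$ that changes value across every turn and is constant across every non-turn, together with the fact that $P_{1,2}$ is connected; the latter I would obtain by writing $P_{1,2}$ as the nested union, over a fixed point, of the leg-skeletons of its supertiles, each of which is connected. Granting $c$ and connectivity, every walk from $e$ to $f$ changes $c$ exactly (its number of turns) times, so $e\sim f$ if and only if $c(e)=c(f)$; hence the two classes are the two fibres of $c$, both nonempty because the two legs of any tile of the pinwheel meet at a right angle, which is a turn. The existence of such a $c$ is equivalent to the assertion that \emph{every closed walk in $P_{1,2}$ has an even number of turns}. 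By planarity the $\mathbb Z/2$-cycle space of a finite subgraph is spanned by its bounded-face boundaries, and passing to the infinite graph as in Theorem~\ref{thm: deBEr}, it is enough to check this parity statement for the boundary of each face type. By the previous lemma those are rotated copies of just two polygons: the $1\times 2$ rectangle, whose boundary makes four right-angle turns (hence an even number), and a kite, whose boundary I would trace out from the inflation rule to confirm that it too makes an even number of turns. For $P_{\sqrt 5}$ one checks the three face types of Lemma~\ref{lem:pinwheel} in the same way.

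I expect the only real work to be that last step: reading off from the substitution the precise cyclic word of edges and interior angles around the kite face (and around the three $P_{\sqrt 5}$ faces) finely enough to pin down the parity of its turn count. Once that is done the coboundary $c$ exists, and everything else is the bookkeeping above.
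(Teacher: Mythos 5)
Your verification of the three axioms is fine and is essentially the paper's: reflexivity via a trivial walk, symmetry via the observation that $\gamma$ is a turn angle if and only if $360^\circ-\gamma$ is, and transitivity by concatenation. The gap is in the count of classes. The parity of turns is an invariant of a \emph{closed walk} --- a sequence of transitions between adjacent edges --- and not of the underlying $\ZZ/2\ZZ$ edge set, so the step ``the cycle space is spanned by bounded-face boundaries, hence it suffices to check each face type'' does not apply as stated. When you write a cycle as a symmetric difference of face boundaries, the transition angle occurring in the combined walk at a shared vertex is a \emph{sum} of angles occurring in the individual boundaries, and you have not shown that the turn indicator is additive under this operation (nor that walks with repeated edges or vertices can be reduced to cycles without changing the parity). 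That additivity is the real content of the lemma: writing each of the twelve possible angles as $r+c\alpha$ with $r\in 90\ZZ$ and $c\in 2\ZZ$, the six turn angles are exactly those with $r/90+c/2$ odd, and this quantity is additive; checking this uses $2\alpha+2\beta=180^\circ$ and the irrationality of $\alpha$ and $\beta$.

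Once that additivity is in hand, the face decomposition buys you nothing: for \emph{any} closed walk the transition angles sum to an integer multiple of $180^\circ$ (the line of the initial edge returns to itself), irrationality forces the total $\alpha$-coefficient to vanish, and additivity of the indicator then gives an even number of turns directly. This is the paper's argument, and it also disposes of the kite face and the three $P_{\sqrt 5}$ face types, whose turn counts you explicitly left unverified (``I expect the only real work to be that last step''). So as written the proposal both rests on an unjustified reduction and defers the computation it reduces to. Two of your side remarks do improve on the paper's terse write-up and are worth keeping: the odd class is nonempty because the two legs of any tile meet at a right angle, and the connectivity of $P_{1,2}$ and $P_{\sqrt 5}$ (used for ``at most two classes'') does require an argument such as your nested-supertile one.
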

\begin{proof}
For any edge $e$ in $P_{1,2}$, we allow for the trivial path $e,e$ since the angle $0$ is possible. Thus, $e\sim e$.

The relation is symmetric since $\gamma$ is the angle of a turn if and only if $360-\gamma$ is as well. Thus, $e\sim f$ if and only if $f\sim e$.

Transitivity is easily verified and thus the relation is an equivalence relation.

Both $P_{1,2}$ and $P_{\sqrt 5}$ are connected and so there are at most two cosets in each. Suppose we have a path $e_1,\dots, e_k$ where $e_1=e_k$, i.e. a cycle. This means that the right-hand-side angles must sum to an integer multiple of 180 which implies that there must be an even number of turns. Hence, all paths from $e$ to $f$ must contain an even number of turns or all paths from $e$ to $f$ must contain an odd number of turns. Therefore, there are exactly two cosets.
\end{proof}

We can now prove the following lemma which will give us the chromatic index.

\begin{lemma}
There are no simple cycles without turns in $P_{1,2}$ and $P_{\sqrt 5}$, respectively.
\end{lemma}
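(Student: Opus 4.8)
The plan is to derive a contradiction from the angle-sum of a simple polygon together with the irrationality of $\alpha$. Suppose $C=e_1e_2\cdots e_m e_1$ is a simple cycle in $P_{1,2}$ with no turns; the argument for $P_{\sqrt5}$ will be identical, since the list of possible angles between two of its edges meeting at a vertex, and the definition of a turn, are exactly the same. Let $\phi_i$ be the interior angle, at the vertex shared by $e_i$ and $e_{i+1}$, of the simple polygon bounded by $C$ in the planar embedding coming from the tiling. By the angle list quoted just before the definition of a turn, $\phi_i$ is one of the twelve tabulated values; and since the six turn-values $\{90,270,2\alpha,2\beta,2\alpha+180,2\beta+180\}$ register as turns regardless of which side of the path one measures the angle on, the hypothesis that $C$ has no turns forces
\[
\phi_i\in\{\,0,\ 180,\ 4\alpha,\ 4\beta,\ 2\alpha+90,\ 2\beta+90\,\}
\]
for every $i$; and since $C$ is simple (so $e_i\neq e_{i+1}$), $\phi_i\neq0$.

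Next I would invoke the classical fact that the interior angles of any simple $m$-gon sum to $(m-2)\cdot180$, equivalently $\sum_{i=1}^m(180-\phi_i)=360$. Using $\alpha+\beta=90$ one checks $180-4\beta=-(180-4\alpha)$ and $90-2\beta=-(90-2\alpha)$, and also $180-4\alpha=2(90-2\alpha)$. Hence, writing $\mu:=90-2\alpha$, every summand $180-\phi_i$ equals one of $0,\ \pm2\mu,\ \pm\mu$, so that $\sum_{i=1}^m(180-\phi_i)=N\mu$ for some $N\in\ZZ$ (explicitly, $N=2(a-b)+(c-d)$, where $a,b,c,d$ count the vertices with $\phi_i=4\alpha,\,4\beta,\,2\alpha+90,\,2\beta+90$). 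The angle-sum identity then reads $N\mu=360$.

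Finally, $\alpha$ is irrational, so $\mu=90-2\alpha$ is irrational, and $N\mu=360$ is impossible: if $N=0$ it says $0=360$, and if $N\neq0$ it gives $\mu=360/N\in\QQ$. This contradiction shows no such cycle $C$ exists in $P_{1,2}$, and the same computation applies verbatim to $P_{\sqrt5}$. The part needing the most care is the geometric bookkeeping in the first step: one must check that each interior angle $\phi_i$ of the cycle really is one of the tabulated vertex angles for whichever side the bounded region lies on, that being a ``turn'' is independent of that choice of side, and that simplicity of $C$ rules out the degenerate value $0$. Once those points are pinned down, the rest is the one-line computation above.
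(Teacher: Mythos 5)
Your proof is correct and follows essentially the same route as the paper: restrict the interior angles of the hypothetical turn-free cycle to the non-turn values, apply the polygon angle-sum, and use the irrationality of $\alpha$ to reach a contradiction. Your exterior-angle formulation with $\mu=90-2\alpha$ (so that $\sum_i(180-\phi_i)=N\mu=360$ is impossible) is just a more explicit rendering of the paper's observation that the linear relations among $4\alpha,4\beta,2\alpha+90,2\beta+90$ force the average interior angle to equal $180^\circ$.
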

\begin{proof}
Assume for contradiction that a simple cycle without turns exists. Its interior (right-hand-side or left-hand-side) angles must only be from the list
\[
180, \ 4\alpha, \ 4\beta, \ 2\alpha+90, \ 2\beta+90,
\]
that is, neither turns or 0.
Any polygon must have an average interior angle strictly less than 180 degrees. Observe that
\[
4\alpha + 4\beta = 2(180) = (2\alpha + 90) + (2\beta + 90), \ \ \textrm{and}
\]
\[
4\alpha + 2(2\beta + 90) = 3(180) = 4\beta + 2(2\alpha + 90).
\]
This, along with the irrationality of $\alpha$ and $\beta$, implies that the average interior angle is 180 degrees, which is a contradiction.
\end{proof}

Here is a small picture of the two cosets of $P_{1,2}$:
\[
\includegraphics[width=0.8\textwidth]{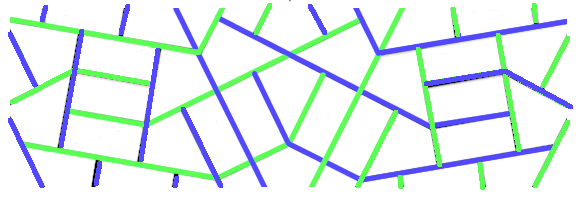}
\]
We do not know if there are any infinite paths in a coset just that there aren't any cycles.

\smallskip

We can now provide an 8 edge-colouring for the pinwheel tiling.

\begin{theorem}\label{thm:cip}
The chromatic index of the pinwheel tiling is $\chi'(P) = 8$.
\end{theorem}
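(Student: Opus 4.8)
The bound $\chi'(P)\le 8$ is already free: Lemma~\ref{lem:Pindex} gives $\Delta(P)=8$, so Theorem~\ref{thm:greaterthan7} (or just Corollary~\ref{cor:viz}) does it non-constructively. The point of the proof is the explicit algorithm, and this is exactly why we have decomposed $P=P_{\sqrt5}\cup P_{1,2}$ into edge-disjoint subgraphs. The plan is to four-edge-colour $P_{1,2}$ with colours $c_1,\dots,c_4$, four-edge-colour $P_{\sqrt5}$ with colours $c_5,\dots,c_8$, and superimpose. Two edges of $P$ meeting at a vertex are either both in $P_{1,2}$ (then they meet at that vertex inside the subgraph $P_{1,2}$, so its colouring separates them), both in $P_{\sqrt5}$ (same), or in different pieces (then their colours come from the disjoint palettes); hence the superposition is a proper $8$-edge-colouring. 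With the lower bound $\Delta(P)=8\le\chi'(P)$ this yields $\chi'(P)=8$. The two pieces are treated identically, so it suffices to four-colour $P_{1,2}$.

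To do this, partition $E(P_{1,2})$ into its two turn-cosets $A,B$ (the existence of exactly two was established above) and let $H_A,H_B$ be the subgraphs spanned by the edges of $A$ and of $B$. The key structural claim is $\Delta(H_A),\Delta(H_B)\le 2$. First, if distinct edges $e,f$ of the same coset meet at a vertex $v$, then $e\sim f$, so every path from $e$ to $f$ has an even number of turns; applied to the length-two path $e,f$, which has at most one turn, this forces the angle from $e$ to $f$ at $v$ to be a non-turn, i.e.\ one of $0,\,180,\,4\alpha,\,4\beta,\,2\alpha+90,\,2\beta+90$. Second, no vertex can carry three edges that are pairwise joined by non-turn angles: writing each non-turn angle in $(0,360)$ as $p\alpha+q\cdot 90$ with $p\in\{0,\pm2,\pm4\}$ and using that $\alpha$ is an irrational multiple of $90^\circ$, one checks that whenever three of them have $p$-coefficients summing to $0$ their $q$-coefficients sum to $6$, so their total is $540\ne360$, while if the $p$-coefficients do not sum to $0$ the total is irrational; so three such angles never sum to $360$. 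Hence at most two coset-$A$ edges (and at most two coset-$B$ edges) meet at any vertex of $P_{1,2}$.

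Next, $H_A$ is acyclic: a cycle in $H_A$ would contain a simple cycle all of whose consecutive edges meet along non-turn angles, that is, a simple cycle without turns in $P_{1,2}$, contradicting the preceding lemma. So $H_A$ is a disjoint union of (possibly infinite) simple paths, hence $2$-edge-colourable; colour its components alternately with $c_1,c_2$ (fixing a starting edge in each two-way infinite component), and colour $H_B$ the same way with $c_3,c_4$. Two edges of $P_{1,2}$ sharing a vertex and lying in the same coset are consecutive on a common path of $H_A$ or $H_B$, so receive distinct colours; if they lie in different cosets their colours are distinct since $\{c_1,c_2\}\cap\{c_3,c_4\}=\emptyset$. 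This is a proper four-edge-colouring of $P_{1,2}$. Running the identical argument on $P_{\sqrt5}$ with $c_5,\dots,c_8$ and superimposing finishes the construction.

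The non-constructive half being immediate, the real content is the algorithm, and its crux is the degree bound $\Delta(H_A),\Delta(H_B)\le 2$ — equivalently, the impossibility of three edges at a vertex pairwise joined by non-turns, for which the short arithmetic with the angle list is essential. Everything downstream (acyclicity from the absence of no-turn cycles, hence the path decomposition, hence $2$-edge-colourability, hence the overlay) is routine; the only place to be slightly careful is the two-way infinite path components of $H_A$ and $H_B$, where "colour alternately" requires choosing a base edge in each component, but this causes no difficulty.
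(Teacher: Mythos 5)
Your proof is correct and follows essentially the same route as the paper: lower bound from $\Delta=8$, then superimposing disjoint four-colour palettes on $P_{1,2}$ and $P_{\sqrt 5}$, each obtained by $2$-edge-colouring the two turn-cosets, which are acyclic by the no-turn-cycle lemma. In fact you supply slightly more detail than the paper does, since the claim that each coset has maximum degree $2$ (three pairwise non-turn angles cannot sum to $360^\circ$) is asserted there without the explicit arithmetic you give.
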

\begin{proof}
Lemma~\ref{lem:Pindex} says the chromatic index is at least 8. To show that there is an explicit 8 colouring is now possible. 

Each coset of $P_{1,2}$ and $P_{\sqrt 5}$ is a graph with no cycles and $\Delta= 2$. Thus, each coset is 2-colourable. Therefore, the pinwheel tiling is 8-colourable.
\end{proof}

\begin{remark} A non constructive proof of Thm.~\ref{thm:cip} is given by Thm.~\ref{thm:greaterthan7}.
\end{remark}

\[
\centering
    \includegraphics[width=1\textwidth]{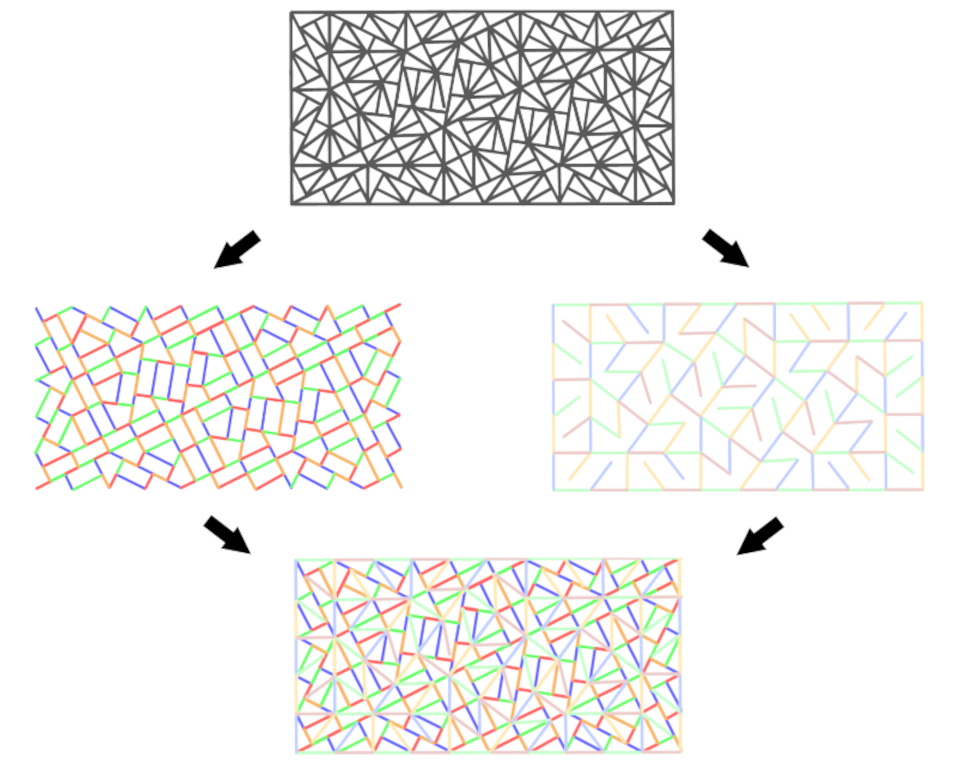}
\]

\smallskip

Lastly we turn to the face colouring of the Pinwheel tiling.

\begin{theorem}
The minimal number of colours needed to colour the faces of the pinwheel tiling is 3.
\end{theorem}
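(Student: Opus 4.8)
The plan is to prove the two bounds separately, following the template of the rational pinwheel face theorem in Section~5. For the lower bound, I would observe that the pinwheel tiling is not edge-to-edge: the substitution (equivalently, Lemma~\ref{lem:pinwheel}) produces T-junctions, i.e. a vertex $v$ of degree $3$ where one ``through'' face and two faces on the stem side meet. These three faces are pairwise adjacent, so the dual graph of $P$ contains a triangle and is not bipartite; equivalently, a $2$-colouring of the faces would have to alternate around every vertex, forcing every degree to be even, contradicting $\deg(v)=3$. Since $P$ has more than one face, one colour is also impossible, so at least $3$ colours are needed. (The $4$-colour theorem together with Theorem~\ref{thm: deBEr} gives $\le 4$, so the real content is to exhibit a $3$-colouring.)

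For the upper bound I would give an explicit $3$-colouring. By Lemma~\ref{lem:pinwheel}, $P$ is tiled by the three composite tiles $T_1,T_2,T_3$ and their rotations, which meet one another only at the marked nodes; since the boundary of each $T_i$ is a union of hypotenuses, composite tiles meet hypotenuse-to-hypotenuse, and because $\sqrt5$ is irrational each boundary hypotenuse of one composite tile coincides with exactly one boundary hypotenuse of a neighbour. I would then pass to the second-level supertile versions $\widehat T_1,\widehat T_2,\widehat T_3$ (apply the pinwheel substitution twice to each constituent triangle) and $3$-colour their interiors by an explicit picture; as colours ignore orientation, this colours every occurrence of every $\widehat T_i$. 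Each boundary edge of a $\widehat T_i$ is made up of one or two hypotenuses of the level-one supertile, and I would classify these — exactly as in the rational pinwheel argument — into Type~I and Type~II, arranging the colouring so that, read along the boundary, every Type~I hypotenuse shows a short edge of a green triangle, then a middle edge of a red, then a middle edge of a blue, while every Type~II hypotenuse shows a short edge of a red, then a middle edge of a green, then a middle edge of a blue. Since a Type~I hypotenuse always glues to a Type~II hypotenuse, the only possible conflicts occur along a single shared hypotenuse; the finitely many resulting local pictures have no colour clash, and for the few relative positions that would clash I would rule them out using the substitution rule, just as in Lemmas~\ref{lem:RP} and~\ref{lem:pinwheel}. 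By Theorem~\ref{thm: deBEr}(c) this colours all of $P$, and combined with the lower bound the minimal face-colouring number is $3$.

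The main obstacle is the compatibility check in the second step: constructing the $3$-colouring of $\widehat T_1,\widehat T_2,\widehat T_3$ so that every boundary hypotenuse realises one of the two prescribed patterns, and then verifying conflict-freeness for every way two composite tiles abut while excluding the bad gluings via the substitution. This is the analogue of the Type~I/Type~II bookkeeping that drove the rational pinwheel case, but heavier here, since there are three composite tiles rather than two rectangles and the irrational rotation produces more relative orientations to enumerate (although still only finitely many local gluing patterns occur).
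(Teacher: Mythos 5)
Your lower bound is fine, and it is in fact more explicit than the paper, which leaves that direction implicit: the pinwheel tiling does contain degree-$3$ T-junction vertices (already visible inside the first-level supertile, where a short side of one small triangle ends in the interior of a side of another), the three faces around such a vertex are pairwise adjacent, so the dual graph contains a triangle and two colours cannot suffice.

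The gap is in the upper bound, at the step ``classify these --- exactly as in the rational pinwheel argument --- into Type~I and Type~II.'' In Section~5 that classification is genuinely geometric: the composite rectangles occur in only two axis-parallel orientations, so ``bottom or left edge'' versus ``top or right edge'' is a well-defined global rule, and it is exactly this rule that forces every shared hypotenuse to present a Type~I pattern on one side and a Type~II pattern on the other. In the pinwheel tiling the composite tiles of Lemma~\ref{lem:pinwheel} occur in infinitely many orientations (the tile orientations are dense in the circle), so there is no analogue of ``bottom/left versus top/right''; your proposal never says how the type of a boundary hypotenuse of an arbitrarily rotated composite tile is determined, nor why the two sides of a shared hypotenuse always receive opposite types. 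That assignment is the entire consistency mechanism, so the argument does not close as written. The paper replaces it by a rotation-covariant combinatorial datum: by Theorem~\ref{thm:Pchromaticnumber} the boundary subgraph of the rectangle/kite decomposition is bipartite (its faces are even cycles), so its vertices carry a global $2$-labelling by A and B, and the face colouring of each composite tile (taken at the \emph{fourth} supertile level, not the second) is chosen as a function of the A/B labels at its corner nodes; adjacent composite tiles then agree automatically because they share those corners. To salvage your version you would need to derive the Type~I/Type~II assignment from some such global labelling rather than from the geometry of the tile.
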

\begin{proof}
Lemma~\ref{lem:Pindex} and Theorem~\ref{thm:Pchromaticnumber} prove that the pinwheel tiling can be tiled by rectangles and kites whose boundary vertices are 2-colourable. Label this two colouring with A's and B's. Finally, use the following fourth-level coloured supertiles, matching vertex labels to the two colouring of the boundaries.
\begin{center}
    \mbox{$\vcenter{\hbox{\includegraphics[width=0.5\textwidth]{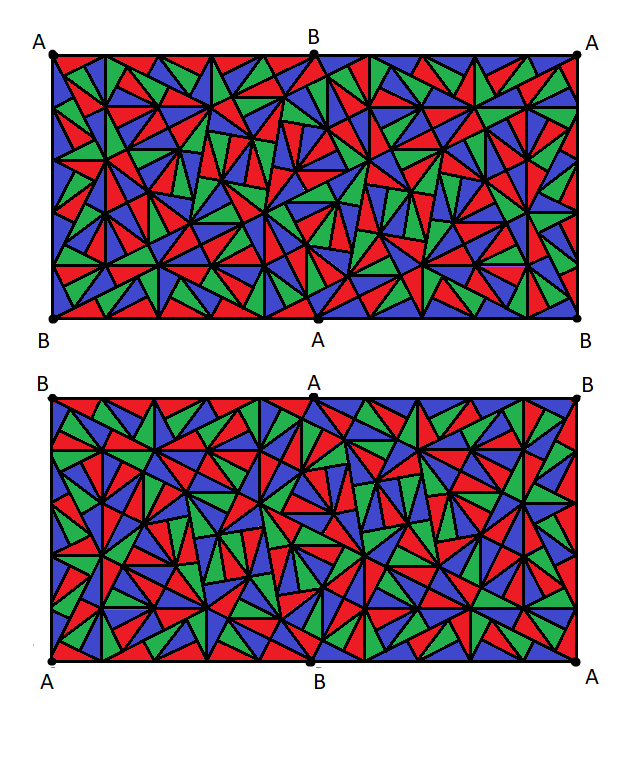}}}$
    $\vcenter{\hbox{\includegraphics[width=0.5\textwidth]{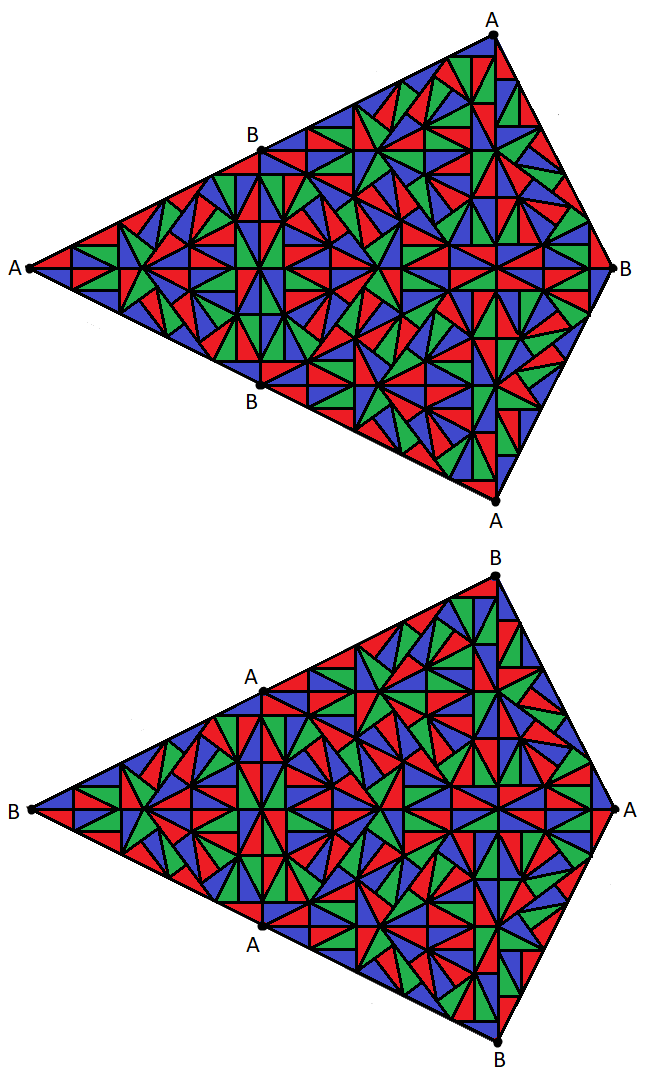}}}$}
\end{center}
\end{proof}

Here is a larger patch of the Pinwheel tiling to better understand the 3-colouring procedure:
\[
    \includegraphics[width=\textwidth]{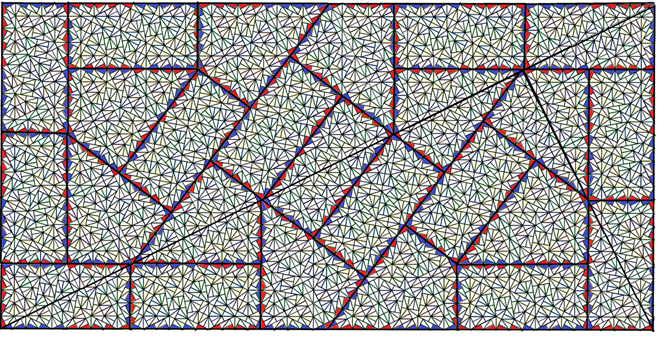}
\]

\section*{Acknowledgements}

C.R. was supported by NSERC Discovery Grant 2019-05430. N.S. was supported by NSERC Discovery Grants 2014-03762 and 2020-00038 .

\end{document}